
\documentclass[12pt]{amsart}%
\usepackage{graphicx}
\usepackage{amscd}
\usepackage{geometry}
\usepackage{tikz-cd}
\usepackage{amsmath}
\usepackage{amsfonts}
\usepackage{amssymb}%
\setcounter{MaxMatrixCols}{30}
\providecommand{\U}[1]{\protect\rule{.1in}{.1in}}
\newtheorem{theorem}{Theorem}[section]
\theoremstyle{plain}

\newtheorem{corollary}[theorem]{Corollary}

\newtheorem{lemma}[theorem]{Lemma}
\newtheorem{notation}{Notation}

\newtheorem{proposition}[theorem]{Proposition}

\theoremstyle{definition}

\newtheorem{remark}[theorem]{Remark}
\newtheorem{example}[theorem]{Example}
\numberwithin{equation}{section}

\setlength{\textheight} {8.5in}
\setlength{\textwidth} {6.0in}
\setlength{\topmargin} {0.0in}
\setlength{\evensidemargin} {0.25in}
\setlength{\oddsidemargin} {0.25in}
\begin{document}
\title[A generalized theory of expansions and collapses]{A generalized theory of expansions and collapses with applications to
$\mathcal{Z}$-compactification}
\author{Craig R. Guilbault}
\address{Department of Mathematical Sciences\\
University of Wisconsin-Milwaukee, Milwaukee, WI 53201}
\email{craigg@uwm.edu}
\author{Daniel Gulbrandsen}
\address{Department of Mathematics and Computer Science \\
Hampden-Sydney College, Hampden-Sydney, VA 23943}
\email{dgulbrandsen@hsc.edu}
\thanks{This research was supported in part by Simons Foundation Grant 427244, CRG}
\date{October 30, 2023}
\keywords{collapse, expansion, $\mathcal{Z}$-set, homotopy negligible set, $\mathcal{Z}%
$-compactification, homotopy negligible compactification}

\begin{abstract}
We generalize the dual notions of \textquotedblleft
expansion\textquotedblright\ and \textquotedblleft collapse\textquotedblright%
\ so they can be applied to arbitrary metric spaces. We also expand the theory
to allow for infinitely many such moves. Those tools are then employed to
prove a variety of compactification theorems. We are particularly interested
in $\mathcal{Z}$-set compactifications, which play an important role in
geometric group theory and in algebraic and geometric topology.

\end{abstract}
\maketitle

In this paper, we expand upon the dual notions of \emph{expansion} and
\emph{collapse} in ways that make them more applicable to spaces encountered
in geometric group theory. In their original forms, these are combinatorial
moves. An \emph{elementary simplicial collapse} removes from a simplicial
complex $K$ a simplex and one of its faces in a way that results in a homotopy
equivalent subcomplex $L$. The reverse is an \emph{elementary simplicial
expansion}. A \emph{collapse} of $K$ to $L$ is a finite sequence of elementary
collapses; an \emph{expansion} is the same process, viewed in reverse. These
definitions generalize to polyhedral and CW complexes where they form the
foundation of \emph{simple homotopy theory}, which plays a central role in
algebraic and geometric topology. See, for example, \cite{Whi39}%
,\cite{Whi41},\cite{Whi49},\cite{Whi50},\cite{Coh73} and \cite{RoSa82}.

The primary objectives of this paper are as follows:

\begin{itemize}
\item[\textbf{i)}] extend the classical notions of \emph{expansion} and
\emph{collapse} to allow for \emph{infinite} sequences of such moves,

\item[\textbf{ii)}] generalize the definition of an \emph{elementary
expansion/collapse} so that it can be applied to a broader class of
topological spaces, and

\item[\textbf{iii)}] use i) and ii) to obtain compactification theorems which
allow us to add \textquotedblleft nice boundaries\textquotedblright\ to a
variety of spaces.
\end{itemize}

\noindent We are particularly interested in obtaining $\mathcal{Z}%
$-compactifications, which play an important role in geometric and algebraic
topology and geometric group theory. Here is a sample theorem.

\begin{theorem}
Every (infinitely) collapsible locally finite simplicial, cubical, or CW
complex is $\mathcal{Z}$-compactifiable.
\end{theorem}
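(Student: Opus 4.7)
The plan is to let the infinite collapse itself supply both the compactification and the $\mathcal{Z}$-set witnessing homotopy. Given an infinite collapse $K = K_{0} \searrow K_{1} \searrow K_{2} \searrow \cdots$, each elementary collapse $K_{i} \searrow K_{i+1}$ carries a canonical strong deformation retraction $r_{i}$ of $K_{i}$ to $K_{i+1}$ supported on the collapsing pair (or its generalized analogue supplied by objective ii). My first step would be to concatenate these retractions along a telescoping decomposition of $[0,1)$, say over subintervals $[1-2^{-i},\,1-2^{-i-1}]$, so as to produce a single homotopy $H \colon K \times [0,1) \to K$ with $H_{0}=\mathrm{id}_{K}$, each time-$t$ map a retraction of $K$ onto $K_{i}$ for $t$ in the appropriate subinterval, and, by the exhausting hypothesis of infinite collapsibility, the restriction of $H$ to any compact subset of $K$ eventually constant in time.

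The second step is to form $\widehat{K} = K \sqcup \partial K$ by choosing an admissible rescaled metric $d$ on $K$ under which the supports of the successive elementary collapses have diameters $\varepsilon_{i}$ satisfying $\sum \varepsilon_{i} < \infty$. Under such a $d$ every trajectory $t \mapsto H(x,t)$ is uniformly Cauchy as $t \to 1^{-}$, so the metric completion $(\widehat{K},d)$ is a compact metric space containing $K$ as a dense open subset. The rescaling must be chosen so that $d$ still generates the original topology of $K$; this is where local finiteness enters in an essential way, since it guarantees that only finitely many of the rescaled supports meet any given neighborhood. The homotopy $H$ then extends by uniform continuity to $\widehat{H} \colon \widehat{K} \times [0,1] \to \widehat{K}$ with $\widehat{H}_{0}=\mathrm{id}$ and $\widehat{H}_{t}(\widehat{K}) \subset K$ for all $t \in (0,1]$, which is precisely the homotopy negligibility condition certifying $\partial K$ as a $\mathcal{Z}$-set. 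The three cases (simplicial, cubical, CW) are handled uniformly because the generalized elementary collapse developed under objective (ii) specializes to the classical notion in each category.

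The main obstacle I anticipate lies in the interface between the combinatorial data of the collapse and the metric control needed for the compactification. The generalized elementary collapses of objective (ii) are formulated in broader topological terms and may not automatically come with the neatly compact support enjoyed by classical simplicial collapses. I would therefore need to extract, from the earlier sections of the paper, a precise statement that each generalized elementary collapse $K_{i}\searrow K_{i+1}$ has a well-localized supporting region whose diameter can be freely rescaled without disturbing the ambient topology on $K$, and, as a preliminary lemma, that local finiteness of $K$ passes to local finiteness of the collapse itself (that is, each compact subset of $K$ meets the supports of only finitely many of the elementary collapses). Once this control is secured, compactness of $\widehat{K}$, metrizability, and continuity of $\widehat{H}$ at the boundary follow from the standard null-sequence argument, and the $\mathcal{Z}$-set conclusion is immediate.
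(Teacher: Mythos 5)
There is a genuine gap, and it sits exactly where the real work of the paper's proof lies. First, your description of the data is oriented the wrong way. In this setting an infinite collapse of $K$ is, by definition, an increasing union of elementary \emph{expansions} $\{v\}=L_{0}\nearrow L_{1}\nearrow L_{2}\nearrow\cdots$ with $\bigcup L_{i}=K$; it is \emph{not} a decreasing sequence $K=K_{0}\searrow K_{1}\searrow\cdots$ of elementary collapses performed on $K$ itself. The distinction is not cosmetic: the standard triangulation of $\mathbb{R}$ is infinitely collapsible but has no free face at all, so not even the first move of your sequence exists. As a consequence, the retraction of $K$ onto $L_{i}$ is already an \emph{infinite} composition of elementary collapse retractions (it must first dispose of everything "beyond" $L_{i}$), so your homotopy cannot reach ``a retraction of $K$ onto $K_{i}$'' after finitely many stages. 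Second, and fatally for the $\mathcal{Z}$-set conclusion, your time schedule is backwards. If the $i$-th collapse is performed during $[1-2^{-i},1-2^{-i-1}]$, then for small $t>0$ the map $H_{t}$ is the identity outside the support of the first collapse; its extension $\widehat{H}_{t}$ therefore fixes the remainder $R$ pointwise for small $t$, and the required condition $\widehat{H}_{t}(\widehat{K})\subseteq K$ for \emph{all} $t>0$ fails. One must instead compress the entire infinite tail of collapses into $[0,\varepsilon]$ for every $\varepsilon>0$ (the paper runs collapse $i$ during $[2^{-i},2^{-i+1}]$), so that any $t>0$ instantly pushes all of $\overline{K}$ into some compact $L_{j}$. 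With that schedule the delicate point becomes joint continuity at $t=0$; the paper obtains it by realizing the homotopy as the inverse limit of a commuting ladder of homotopies $H_{i}$ on the $C_{i}$, and the commutativity of that ladder --- hence the well-definedness and continuity of the limiting homotopy --- is precisely what the track-faithfulness of elementary simplicial, cubical, and CW collapses buys. Your proposal never engages with why the infinite composition converges to a continuous map.

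On the compactification itself: your rescaled-metric completion is a different route from the paper's, which takes $\overline{X_{I}}\cong\underleftarrow{\lim}\{C_{i},r_{i}\}$ and topologizes $K$ by the initial topology $\mathcal{T}_{I}$ of the retractions $r_{i\infty}$. In the paper, local finiteness enters through the insulation condition of Theorem \ref{Theorem: TFAE with T_I = T_D} and Proposition \ref{Proposition: equivalence of the topologies}, which guarantee both that $\mathcal{T}_{I}$ agrees with the usual topology on $|K|$ and that $K$ is open in $\overline{K}$, so that $R$ is closed (as a $\mathcal{Z}$-set must be). A metric-completion argument could plausibly be made to work, but you would still owe proofs of total boundedness, of the identification of the completion with the inverse limit, and of the independence from the rescaling --- and in any case it does not repair the homotopy. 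The missing ingredients, concretely, are: the inverse-limit compactification and the $\mathcal{T}_{I}$ versus $\mathcal{T}_{D}$ comparison; the reversed, tail-compressed parametrization of the collapsing homotopy; and the track-faithfulness property that makes the infinite concatenation continuous.
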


A concrete application can be found in the second author's dissertation
\cite{Gul23}, where it is shown that every locally finite CAT(0) cube complex
$X$ is collapsible. The resulting $\mathcal{Z}$-boundary (referred to as the
\emph{cubical boundary}) shares properties with the visual boundary, but
retains some of the combinatorial structure of $X$. It can be viewed as a
hybrid of the visual and Roller boundaries.

None of the goals listed above are entirely new. Allowing an \emph{infinite}
sequence expansions or collapses is a fairly obvious idea, but its appearance
in the literature is surprisingly rare. One exception is a recent arXiv
posting by Adiprasito and Funar and \cite{AdFu21}. A different approach, was
developed by Siebenmann \cite{Sie70a}, under which only finitely many moves
are permitted, but a move can consist of an infinite discrete collection of
\emph{simultaneous} elementary expansions or collapses. The resulting theory,
known as \emph{infinite simple homotopy theory}, is important but different
from what we do here. For example, the move just described can only be
performed on a noncompact space. By contrast, our approach can \emph{end} with
a noncompact space, but at each step along the way the spaces are typically compact.

To accomplish the second goal, we leave the world of combinatorially
constructed spaces to define a \textquotedblleft compact topological
collapse\textquotedblright\ (or simply \emph{compact collapse}) of one space
onto another. These compact collapses are the same as Ferry's
\emph{contractible retractions} \cite{Fer80}. We will comment more on that
work in Section \ref{Section: Topological expansions and collapses}.

Our main compactification theorems generalize some well known constructions.
One of those is the $\mathcal{Z}$-compactification of the mapping telescope of
an inverse sequence of finite polyhedra, which played a key role in work by
Chapman and Siebenmann \cite[\S 4]{ChSi76}. Another is the addition of the
visual boundary to a proper CAT(0) space $X$, as described by Bridson and
Haefliger \cite[\S II.8.5]{BrHa99}. They do not mention $\mathcal{Z}$-sets;
that connection, and its usefulness, was noted by Bestvina \cite[Example
1.2(ii)]{Bes96}. Yet another is the proof, as presented by Jeremy Brazus in
his blog \emph{Wild Topology}, of a theorem by Borsuk, showing that all\emph{
dendrites} are contractible. Each of these is a special case of results
presented in Sections \ref{Section: Compactifications} and
\ref{Section: Examples}.

The existence of an infinite sequence of compact expansions $C_{0}\swarrow
C_{1}\swarrow C_{3}\swarrow C_{4}\swarrow\cdots$, beginning with a compactum
$C_{0}$, leads to a inverse sequence of retractions
\[
C_{0}\overset{r_{1}}{\longleftarrow}C_{1}\overset{r_{2}}{\longleftarrow}%
C_{2}\overset{r_{3}}{\longleftarrow}\cdots
\]
a direct sequence of inclusions%
\[
C_{0}\overset{s_{0}}{\longrightarrow}C_{1}\overset{s_{1}}{\longrightarrow
}C_{2}\overset{s_{2}}{\longrightarrow}\cdots
\]
and a set $X=\cup_{i=0}^{\infty}C_{i}$, whose topology is ambiguous. Our
compactification strategy relies on the inverse limit construction, which
determines the correct (but not always the obvious) topology for $X$. This
realization led to an unexpected, but interesting, fourth major theme of this article:

\begin{itemize}
\item[\textbf{iv)}] explore the topology of inverse sequences of retractions
and corresponding direct sequences of inclusion maps.
\end{itemize}

\noindent This topic provides another connection to the literature. In
\cite{MaPr15}, Marsh and Prajs studied inverse sequences of retractions from a
different perspective. Whereas we use these sequences for constructing and
compactifying noncompact spaces, they \emph{begin} with a compact space $Y$
and look for situations when $Y$ can be realized as the inverse limit of
compact subspaces $C_{0},C_{1},C_{2},\cdots$ where the bonding maps are retractions.

The layout of this paper is as follows. Section
\ref{Section: The general setting} is general topology. We identify topologies
$\mathcal{T}_{I}$ and $\mathcal{T}_{D}$ on $X$---one is strongly associated
with the inverse sequence; the other with the direct sequence. We prove a
useful theorem that shows when they agree; then we use the inverse limit
construction to define a canonical compactification of $\left(  X,\mathcal{T}%
_{I}\right)  $ that is used throughout the paper. In Section
\ref{Section: Examples}, we bring these abstractions to life with a variety of
examples. In Section
\ref{Section: Expansions and collapses in the simplicial category} we review
the classical theory of expansions and collapses, and develop terminology and
notation that allows for infinitely many such moves. In Section
\ref{Section: Topological expansions and collapses} we develop generalizations
of the concepts in Section
\ref{Section: Expansions and collapses in the simplicial category}. In Section
\ref{Section: Compactifications} we apply all of this to prove a variety of
compactification theorems; of particular interest are $\mathcal{Z}%
$-compactifications. We conclude the paper with a variety of applications.

\section{The general setting\label{Section: The general setting}}

In the most general setting, we are interested in spaces that can be expressed
as monotone unions of compact metric spaces $C_{0}\subseteq C_{1}\subseteq
C_{2}\subseteq\cdots$ where, for each $i>0$, there is a retraction
$r_{i}:C_{i}\rightarrow C_{i-1}$. This situation is concisely described with
an an inverse sequence
\begin{equation}
C_{0}\overset{r_{1}}{\longleftarrow}C_{1}\overset{r_{2}}{\longleftarrow}%
C_{2}\overset{r_{3}}{\longleftarrow}\cdots
\label{inverse sequence of retractions}%
\end{equation}
There is a corresponding direct sequence
\begin{equation}
C_{0}\overset{s_{0}}{\longrightarrow}C_{1}\overset{s_{1}}{\longrightarrow
}C_{2}\overset{s_{2}}{\longrightarrow}\cdots
\label{direct sequence of inclusions}%
\end{equation}
where the $s_{i}$ are inclusions. There is also a set $X=\cup_{i=0}^{\infty
}C_{i}$. Sequences (\ref{inverse sequence of retractions}) and
(\ref{direct sequence of inclusions}) suggest preferred topologies for $X$,
but those topologies may not agree. We will return to that issue after
establishing some notation.

For $i\leq j$, let $r_{ij}:C_{i}\leftarrow C_{j}$ denote the obvious
composition, with special cases $r_{ii}=\operatorname*{id}_{C_{i}}$ and
$r_{(i-1)i}=r_{i}$. Note that: each $r_{ij}$ is a retraction; $r_{ij}\circ
r_{jk}=r_{ik}$; and $\left.  r_{ik}\right\vert _{C_{j}}=r_{ij}$ whenever
$i\leq j\leq k$. Thus, we may define retractions (on the level of sets)
$r_{i\infty}:C_{i}\leftarrow X$ by letting $\left.  r_{i\infty}\right\vert
_{C_{j}}=r_{ij}$. Similarly, let $s_{ij}:C_{i}\rightarrow C_{j}$ denote the
inclusion map or, equivalently, the composition of maps from
(\ref{direct sequence of inclusions}). Let $s_{i\infty}:C_{i}\rightarrow X$
also denote inclusion.

One natural way to topologize $X$ is with the \emph{coherent topology} with
respect to the collection $\left\{  C_{i}\right\}  _{i=0}^{\infty}$. Under
that topology, $U\subseteq X$ is open if and only if $U\cap C_{i}$ is open in
$C_{i}$ for all $i$. This is equivalent to the \emph{final topology} on $X$
with respect to the collection $\left\{  s_{i\infty}:C_{i}\rightarrow
X\right\}  _{i=0}^{\infty}$ which, by definition, is the finest topology on
$X$ under which all of these functions are continuous. We denote this topology
by $\mathcal{T}_{D}$ since it is closely related to the \emph{direct} sequence
(\ref{direct sequence of inclusions}).

An equally natural topology for $X$ is with the \emph{initial topology} with
respect to the collection $\left\{  r_{i\infty}:X\rightarrow C_{i}\right\}
_{i=0}^{\infty}$. By definition, this is the coarsest topology under which
these functions are all continuous, i.e., it is the topology generated by the
subbasis consisting of sets of the form $r_{i\infty}^{-1}\left(  U\right)  $,
where $U$ is an open subset of $C_{i}$ for some $i$. We denote this topology
by $\mathcal{T}_{I}$ since it is most naturally associated with the
\emph{inverse }sequence (\ref{inverse sequence of retractions}). In the
special cases under consideration here, this subbasis is actually a basis.

\begin{lemma}
Given the above setup, the defining subbasis
\[
\mathcal{S}_{I}=\left\{  r_{i\infty}^{-1}\left(  U\right)  \mid U\text{ is an
open subset of }C_{i}\text{ for some }i\right\}
\]
for $\mathcal{T}_{I}$ is a basis for $\mathcal{T}_{I}$.
\end{lemma}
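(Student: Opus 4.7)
The plan is to show that $\mathcal{S}_I$ is already closed under finite intersections, which (together with the fact that $X=r_{0\infty}^{-1}(C_0)\in\mathcal{S}_I$ covers $X$) immediately upgrades it from a subbasis to a basis for $\mathcal{T}_I$.

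The key ingredient is the compatibility relation $\left.r_{ik}\right|_{C_j}=r_{ij}$ for $i\le j\le k$, recorded just before the lemma. From this, one checks that on all of $X$ the retractions factor as $r_{i\infty}=r_{ij}\circ r_{j\infty}$ whenever $i\le j$: for any $x\in X$, pick some $k\ge j$ with $x\in C_k$, and apply the cited identity on $C_k$. Consequently, if $U\subseteq C_i$ is open, then
\[
r_{i\infty}^{-1}(U)=r_{j\infty}^{-1}\bigl(r_{ij}^{-1}(U)\bigr),
\]
and because $r_{ij}:C_j\to C_i$ is continuous, $r_{ij}^{-1}(U)$ is open in $C_j$. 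Thus every subbasis element indexed at level $i$ can be rewritten as a subbasis element indexed at any higher level $j$.

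Now take two typical subbasis elements $r_{i\infty}^{-1}(U)$ and $r_{j\infty}^{-1}(V)$ with $U\subseteq C_i$, $V\subseteq C_j$ open, and assume without loss of generality that $i\le j$. Using the rewriting above,
\[
r_{i\infty}^{-1}(U)\cap r_{j\infty}^{-1}(V)
=r_{j\infty}^{-1}\bigl(r_{ij}^{-1}(U)\bigr)\cap r_{j\infty}^{-1}(V)
=r_{j\infty}^{-1}\bigl(r_{ij}^{-1}(U)\cap V\bigr),
\]
and $r_{ij}^{-1}(U)\cap V$ is open in $C_j$. Hence the intersection lies in $\mathcal{S}_I$. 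A trivial induction extends this to any finite intersection, so $\mathcal{S}_I$ is closed under finite intersections and therefore is a basis for the topology it generates, namely $\mathcal{T}_I$.

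There is no real obstacle; the only subtle point is verifying the global factorization $r_{i\infty}=r_{ij}\circ r_{j\infty}$, which requires one to invoke the compatibility of the $r_{ij}$'s on the overlaps $C_j\subseteq C_k$ rather than just on the ``top'' set. Once that is in hand, the proof is a one-line manipulation.
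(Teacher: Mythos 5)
Your proof is correct and follows essentially the same route as the paper's: both reduce to showing $\mathcal{S}_I$ is closed under pairwise intersections by rewriting $r_{i\infty}^{-1}(U)$ as $r_{j\infty}^{-1}\bigl(r_{ij}^{-1}(U)\bigr)$ for $i\le j$ and intersecting at the common level $j$. Your explicit verification of the global factorization $r_{i\infty}=r_{ij}\circ r_{j\infty}$ is a detail the paper leaves implicit, but it is not a different argument.
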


\begin{proof}
It suffices to show that if $U$ and $V$ are open subsets of $C_{i}$ and
$C_{j}$ respectively, then $r_{i\infty}^{-1}\left(  U\right)  \cap r_{j\infty
}^{-1}\left(  V\right)  \in\mathcal{S}_{I}$. First note that, if $i=j$, then
$r_{i\infty}^{-1}\left(  U\right)  \cap r_{i\infty}^{-1}\left(  V\right)
=r_{i\infty}^{-1}\left(  U\cap V\right)  \in\mathcal{S}$. If $i<j$, then
$r_{i\infty}^{-1}\left(  U\right)  =r_{j\infty}^{-1}\left(  r_{ij}^{-1}\left(
U\right)  \right)  $, so the desired conclusion follows from the previous case.
\end{proof}

\begin{remark}
Since the inclusion maps in sequence (\ref{direct sequence of inclusions}) do
not depend upon the bonding maps in (\ref{inverse sequence of retractions}),
$\mathcal{T}_{D}$ is essentially well-defined by the collection $\left\{
C_{i}\right\}  _{i=0}^{\infty}$. By contrast, the topology $\mathcal{T}_{I}$
depends on the bonding maps in (\ref{inverse sequence of retractions}).
Example of this phenomenon can be found in Examples
\ref{Example: cone on a sequence 1}-\ref{Example: compactifications of a ray}
of the following section..
\end{remark}

A few more definitions and elementary observations will be useful as we proceed.

\begin{lemma}
\label{Lemma: general topology items}Given the above setup,

\begin{enumerate}
\item \label{Item 1 of general topology}$\mathcal{T}_{D}$ is as fine as or
finer than $\mathcal{T}_{I}$; i.e., $\mathcal{T}_{I}\subseteq\mathcal{T}_{D}$, and

\item \label{Item 2 of general topology}in both $\left(  X,\mathcal{T}%
_{D}\right)  $ and $\left(  X,\mathcal{T}_{I}\right)  $, each $C_{i}$ is a
subspace of $X$.
\end{enumerate}
\end{lemma}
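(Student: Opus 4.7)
For part (\ref{Item 1 of general topology}), the plan is to show that every subbasic open set of $\mathcal{T}_I$ belongs to $\mathcal{T}_D$. Since $\mathcal{T}_I$ is generated by the sets $r_{i\infty}^{-1}(U)$ with $U$ open in $C_i$, it suffices to check that each retraction $r_{i\infty}:X\to C_i$ is continuous when $X$ carries $\mathcal{T}_D$. By the universal property of the final topology, a map from $(X,\mathcal{T}_D)$ is continuous iff its pullback along every inclusion $s_{j\infty}:C_j\to X$ is continuous. So I would verify that $r_{i\infty}\circ s_{j\infty}:C_j\to C_i$ is continuous for every $j$. Split into two cases: if $j\geq i$ this composition is the retraction $r_{ij}$; if $j<i$ it is the inclusion $C_j\hookrightarrow C_i$, which is continuous since $C_j$ is a (metric) subspace of $C_i$ in the original structure. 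Either way it is continuous, so $r_{i\infty}$ is continuous relative to $\mathcal{T}_D$, and therefore $r_{i\infty}^{-1}(U)\in\mathcal{T}_D$, giving $\mathcal{T}_I\subseteq\mathcal{T}_D$.

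For part (\ref{Item 2 of general topology}), I would show in each of the two topologies the two inclusions of topologies separately. To see that the subspace topology on $C_i$ is at least as coarse as its original topology: in $\mathcal{T}_D$, if $W\in\mathcal{T}_D$ then by definition $W\cap C_i$ is open in $C_i$; in $\mathcal{T}_I$, it is enough to check this on the basic open sets $r_{j\infty}^{-1}(V)$, and intersecting with $C_i$ yields either $r_{ji}^{-1}(V)$ (when $j\leq i$) or $V\cap C_i$ (when $j>i$, since $r_{j\infty}$ is the identity on $C_j\supseteq C_i$), both of which are open in $C_i$.

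For the reverse inclusion in both topologies, the key trick is to use $r_{i\infty}$ as a continuous extension of the identity on $C_i$. Given $U$ open in $C_i$, set $W:=r_{i\infty}^{-1}(U)$. In $\mathcal{T}_I$ this is a basic open set by definition, and in $\mathcal{T}_D$ it is open by the continuity of $r_{i\infty}$ established in part (\ref{Item 1 of general topology}). Since $r_{i\infty}$ restricts to the identity on $C_i$, one has $W\cap C_i=U$, exhibiting $U$ as the trace of an open set of $X$ on $C_i$ in both topologies. The only step requiring any thought is checking continuity of $r_{i\infty}$ on $(X,\mathcal{T}_D)$ in part (\ref{Item 1 of general topology}); everything else is unpacking definitions.
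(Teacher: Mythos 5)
Your proposal is correct and follows essentially the same route as the paper: part (1) amounts to checking that $r_{i\infty}^{-1}(U)\cap C_j$ is open in each $C_j$ (you phrase this via the universal property of the final topology, which is the same computation), and part (2) uses the same two case analyses on basic open sets together with the observation that $r_{i\infty}^{-1}(U)\cap C_i=U$. No gaps.
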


\begin{proof}
To verify \ref{Item 1 of general topology}), let $U$ be an open subset of
$C_{i}$ and $r_{i\infty}^{-1}\left(  U\right)  $ be the corresponding basic
open set in $\mathcal{T}_{I}$. Since $\left.  r_{i\infty}\right\vert _{C_{i}%
}=\operatorname*{id}_{C_{i}}$, it follows that $r_{i\infty}^{-1}\left(
U\right)  \cap C_{i}=U$. So, for all $j\leq i$, $r_{i\infty}^{-1}\left(
U\right)  \cap C_{j}=U\cap C_{j}$ is open in $C_{j}$. For $j>i$, $r_{i\infty
}^{-1}\left(  U\right)  \cap C_{j}=r_{ij}^{-1}\left(  U\right)  $, which is
open in $C_{j}$ since $r_{ij}$ is continuous. Hence $r_{i\infty}^{-1}\left(
U\right)  \in\mathcal{T}_{D}$.

For \ref{Item 2 of general topology}), let $\mathcal{T}_{C_{i}}$ denote the
original topology on $C_{i}$, $\mathcal{T}_{C_{i}}^{\prime}$ the subspace
topology on $C_{i}$ induced by $\mathcal{T}_{D}$, and $\mathcal{T}_{C_{i}%
}^{^{\prime\prime}}$ the subspace topology on $C_{i}$ induced by
$\mathcal{T}_{I}$.

If $U^{\prime}\in\mathcal{T}_{C_{i}}^{\prime}$ there exists $U\in
\mathcal{T}_{D}$ such that $U^{\prime}=U\cap C_{i}$, so by definition of
$\mathcal{T}_{D}$, $U^{\prime}\in\mathcal{T}_{C_{i}}$. Conversely, if
$U^{\prime}\in\mathcal{T}_{C_{i}}$, then $r_{i\infty}^{-1}\left(  U^{\prime
}\right)  \in\mathcal{T}_{I}$ so, by \ref{Item 1 of general topology}),
$r_{i\infty}^{-1}\left(  U^{\prime}\right)  \in\mathcal{T}_{D}$. Since
$r_{i\infty}^{-1}\left(  U^{\prime}\right)  \cap C_{i}=U^{\prime}$,
$U^{\prime}\in\mathcal{T}_{C_{i}}^{\prime}$.

The latter half of the above argument assures that $\mathcal{T}_{C_{i}%
}\subseteq\mathcal{T}_{C_{i}}^{^{\prime\prime}}$. For the reverse inclusion,
let $r_{j\infty}^{-1}\left(  U\right)  $ be a basic open subset in
$\mathcal{T}_{I}$ where $U\in\mathcal{T}_{C_{j}}$ and $U^{\prime\prime
}=r_{j\infty}^{-1}\left(  U\right)  \cap C_{i}\in\mathcal{T}_{C_{i}}%
^{^{\prime\prime}}$. By again following the argument in
\ref{Item 1 of general topology}), we know that $r_{j\infty}^{-1}\left(
U\right)  \cap C_{i}$ is either $U\cap C_{i}$ or $r_{ij}^{-1}\left(  U\right)
$, depending on whether $i\leq j$ or $i>j$. In both cases we have an element
of $\mathcal{T}_{C_{i}}$.
\end{proof}

\begin{notation}
Going forward, we will often simplify notation by letting $X_{D}$ and $X_{I}$
denote $\left(  X,\mathcal{T}_{D}\right)  $ and $\left(  X,\mathcal{T}%
_{I}\right)  $. Similarly, when $A\subseteq X$, we will let
$\operatorname*{Int}_{D}A$ and $\operatorname*{Int}_{I}A$ to denote the
interiors of $A$ in those spaces. Lemma \ref{Lemma: general topology items}
allows us to view $C_{i}$ as a subspace of $X_{D}$ or $X_{I}$, without need
for specification.
\end{notation}

We say that $A\subseteq X$ is $j$\emph{-stationary }(with respect to the
sequence $\left\{  C_{i},r_{i}\right\}  $) if $A\subseteq C_{j}$ and
$r_{j\infty}^{-1}\left(  A\right)  =A$. The following is immediate.

\begin{lemma}
\label{Lemma: j-stationary properties}Given the above setup,

\begin{enumerate}
\item every subset of a $j$-stationary set is $j$-stationary,

\item every union of $j$-stationary sets is $j$-stationary, and

\item if $A\subseteq X$ is $j$-stationary, then $r_{jk}^{-1}\left(  A\right)
=A$ and $r_{k\infty}^{-1}\left(  A\right)  =A$ for all $k\geq j$. In
particular, $A$ is $k$-stationary, for all $k\geq j$.
\end{enumerate}
\end{lemma}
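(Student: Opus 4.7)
The plan is to verify each of the three assertions directly from the definition of $j$-stationarity, leveraging two elementary facts established just before the lemma: $r_{j\infty}$ restricts to the identity on $C_j$ (since $r_{jj}=\operatorname{id}$), and the compatibility $r_{ji}=r_{jk}\circ r_{ki}$ for $j\leq k\leq i$ globalizes to the set-theoretic identity $r_{j\infty}=r_{jk}\circ r_{k\infty}$ for every $k\geq j$.

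For (1), let $B$ be $j$-stationary and $A\subseteq B\subseteq C_j$. Because $r_{j\infty}$ fixes $C_j$ pointwise, the inclusion $A\subseteq r_{j\infty}^{-1}(A)$ is automatic. Conversely, any $x\in r_{j\infty}^{-1}(A)$ lies in $r_{j\infty}^{-1}(B)=B\subseteq C_j$, so $r_{j\infty}(x)=x$, which forces $x\in A$. Part (2) is even shorter: preimages commute with unions, and any union of subsets of $C_j$ stays inside $C_j$.

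For (3), fix $k\geq j$ and set $B:=r_{jk}^{-1}(A)\subseteq C_k$. Using the global factorization,
\[
r_{k\infty}^{-1}(B)=(r_{jk}\circ r_{k\infty})^{-1}(A)=r_{j\infty}^{-1}(A)=A.
\]
Since $B\subseteq C_k$ is fixed pointwise by $r_{k\infty}$, we obtain $B\subseteq r_{k\infty}^{-1}(B)=A$; conversely $A\subseteq C_j$ is fixed pointwise by $r_{jk}$, so $A\subseteq r_{jk}^{-1}(A)=B$. Hence $r_{jk}^{-1}(A)=A$, and substituting back into the displayed equation yields $r_{k\infty}^{-1}(A)=A$. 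Combined with $A\subseteq C_j\subseteq C_k$, this is precisely the statement that $A$ is $k$-stationary.

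I do not expect a genuine obstacle here; the lemma is essentially a bookkeeping exercise. The one small point worth being deliberate about is recording the global factorization $r_{j\infty}=r_{jk}\circ r_{k\infty}$ upfront, since it is what causes the ``in particular'' clause of (3) to drop out with no extra work once $r_{jk}^{-1}(A)=A$ has been established.
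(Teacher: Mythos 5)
Your proof is correct; the paper itself offers no argument, declaring the lemma ``immediate,'' and your direct verification from the definition---using that each $r_{jk}$ and $r_{j\infty}$ restricts to the identity on $C_j$ together with the factorization $r_{j\infty}=r_{jk}\circ r_{k\infty}$---is exactly the routine check the authors had in mind. No gaps.
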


We say that $A\subseteq X$ is $j$-\emph{insulated} if there exists a
$j$-stationary open neighborhood of $A$ in $C_{j}$. Lemma
\ref{Lemma: j-stationary properties} gives the following.

\begin{lemma}
If $A\subseteq X$ is $j$-insulated, then $A$ is $k$-insulated for all $k\geq
j$.
\end{lemma}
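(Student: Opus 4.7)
The plan is to take the witnessing neighborhood from the hypothesis and show that it serves, unchanged, as a witness at every later level. Specifically, suppose $A$ is $j$-insulated, so there is a set $V \subseteq C_j$ that is open in $C_j$, contains $A$, and is $j$-stationary. Fix $k \geq j$; I will verify that this same $V$ is a $k$-stationary open neighborhood of $A$ in $C_k$.

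First, by part (3) of Lemma~\ref{Lemma: j-stationary properties}, $V$ being $j$-stationary automatically gives that $V$ is $k$-stationary, and moreover $r_{jk}^{-1}(V) = V$. The containment $A \subseteq V \subseteq C_j \subseteq C_k$ is immediate. The only nontrivial point to check is that $V$, which was originally given only as an open subset of $C_j$, is in fact open in $C_k$. For this, use the continuity of the retraction $r_{jk}\colon C_k \to C_j$: since $V$ is open in $C_j$, the preimage $r_{jk}^{-1}(V)$ is open in $C_k$, and by the stationarity identity just quoted, this preimage equals $V$ itself.

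Thus $V$ is a $k$-stationary open neighborhood of $A$ in $C_k$, witnessing that $A$ is $k$-insulated. There is no real obstacle here; the only thing to watch is the small conceptual point that openness of $V$ in the smaller complex $C_j$ upgrades to openness in $C_k$ precisely because $r_{jk}$ is continuous and $V$ is stationary, which is exactly the content of Lemma~\ref{Lemma: j-stationary properties}(3). A single sentence proof will suffice.
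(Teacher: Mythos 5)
Your proof is correct and is exactly the argument the paper intends: the paper offers no written proof beyond the remark that Lemma \ref{Lemma: j-stationary properties} gives the result, and your write-up supplies precisely the missing details (the same witness $V$ works at level $k$, with openness in $C_k$ following from continuity of $r_{jk}$ together with $r_{jk}^{-1}(V)=V$). Nothing to add.
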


We will say that $A\subseteq X$ is \emph{stationary} [\emph{insulated}] if
there is some $j\in%
\mathbb{N}
$ for which $A$ is $j$-stationary [$j$-insulated]. If a set $\left\{
x\right\}  $ has one of these properties, we attribute that property to the
point $x$.

\begin{lemma}
\label{Lemma: x-insulated implies C_i insulated}If each $x\in C_{i}$ is
insulated, then $C_{i}$ is insulated.
\end{lemma}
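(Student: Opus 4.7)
The plan is to exploit compactness of $C_{i}$ to upgrade the pointwise hypothesis to a uniform one on a single level $C_{J}$. For each $x\in C_{i}$, choose $j(x)$ and a $j(x)$-stationary open neighborhood $U_{x}$ of $x$ in $C_{j(x)}$; replacing $j(x)$ by $\max\{i,j(x)\}$ and invoking the preceding lemma, we may assume $j(x)\geq i$. Then $U_{x}\cap C_{i}$ is an open neighborhood of $x$ in $C_{i}$ (using Lemma \ref{Lemma: general topology items}\ref{Item 2 of general topology}), so these sets form an open cover of the compact space $C_{i}$. Extract a finite subcover corresponding to points $x_{1},\dots,x_{n}$ and set $J=\max\{j(x_{1}),\dots,j(x_{n})\}$.

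Next I would promote each $U_{x_{k}}$ to a $J$-stationary open subset of $C_{J}$. By Lemma \ref{Lemma: j-stationary properties}(3), $U_{x_{k}}$ is $J$-stationary, and in particular $r_{j(x_{k})J}^{-1}(U_{x_{k}})=U_{x_{k}}$. Since $U_{x_{k}}$ is open in $C_{j(x_{k})}$ and $r_{j(x_{k})J}:C_{J}\to C_{j(x_{k})}$ is continuous, this identity displays $U_{x_{k}}$ as an open subset of $C_{J}$. The union $U=\bigcup_{k=1}^{n}U_{x_{k}}$ is therefore open in $C_{J}$, it is $J$-stationary by Lemma \ref{Lemma: j-stationary properties}(2), and it contains $C_{i}$ because its trace on $C_{i}$ already covers $C_{i}$. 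Hence $U$ is a $J$-stationary open neighborhood of $C_{i}$ in $C_{J}$, so $C_{i}$ is $J$-insulated and thus insulated.

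The only mildly delicate step is the one that justifies viewing each individual $U_{x_{k}}$ as an open set in the larger space $C_{J}$ rather than only in $C_{j(x_{k})}$; this is where stationarity does real work, by converting $U_{x_{k}}$ into the preimage of an open set under a continuous retraction. Everything else is bookkeeping: enlarging indices uniformly via the previous lemma, finite subcover from compactness, and closure of stationarity under finite union.
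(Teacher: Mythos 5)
Your proof is correct and follows essentially the same route as the paper's: use compactness of $C_{i}$ to extract a finite subcover of stationary neighborhoods, pass to the maximum index $J$, and use stationarity (via $r_{jJ}^{-1}(U)=U$ and continuity of the retraction) to view each neighborhood as a $J$-stationary open subset of $C_{J}$. Your preliminary normalization $j(x)\geq i$ just replaces the paper's case split on whether $j_{x}\leq i$ or $j_{x}>i$, and is arguably a cleaner way to organize the same argument.
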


\begin{proof}
By hypothesis, for each $x\in C_{i}$, there exists a $j_{x}\geq0$ and a
$j_{x}$-stable open neighborhood $U_{j_{x}}$ of $x$ in $C_{j_{x}}$. When
$j_{x}\leq i$, Lemma \ref{Lemma: j-stationary properties} and continuity imply
that $U_{j_{x}}$ is open in $C_{i}$ whereas, when $j_{x}>i$, $U_{j_{x}}\cap
C_{i}$ is open in $C_{i}$. So, by compactness, there exists a finite sequence
$j_{x_{1}}\leq\cdots\leq j_{x_{k}}$ such that the sets $U_{j_{x_{1}}}%
,\cdots,U_{j_{x_{k}}}$ cover $C_{i}$. If $j_{x_{k}}\leq j$, then $C_{i}$ is a
union of $j$-stationary open subsets $U_{j_{x_{1}}},\cdots,U_{j_{x_{k}}}$, and
is therefore $j$-insulated. If $j_{x_{k}}>i$, then each of $U_{j_{x_{1}}%
},\cdots,U_{j_{x_{r}}}$ is $j_{x_{k}}$-stationary and open in $C_{j_{x_{k}}}$.
Since $C_{i}\subseteq\cup_{r=1}^{k}U_{j_{x_{r}}}$ it follows that $C_{i}$ is
$j_{x_{k}}$-insulated.
\end{proof}

\begin{theorem}
\label{Theorem: TFAE with T_I = T_D}Given the above setup, the following are equivalent.

\begin{enumerate}
\item \label{Item 1 of TFAE}Each point of $X$ is insulated.

\item \label{Item 2 of TFAE}$X=\cup_{i=0}^{\infty}\operatorname*{Int}_{I}%
C_{i}$.

\item \label{Item 3 of TFAE}$\mathcal{T}_{I}=\mathcal{T}_{D}$.
\end{enumerate}

\begin{proof}
We begin by proving \ref{Item 1 of TFAE})$\Leftrightarrow$\ref{Item 2 of TFAE}%
). For the forward implication, note that Lemma
\ref{Lemma: x-insulated implies C_i insulated} guarantees that for each $i$,
there exists an integer $k>i$ and an open subset $U_{k}\subseteq C_{k}$ that
contains $C_{i}$ and for which $r_{k\infty}^{-1}\left(  U_{k}\right)  =U_{k}$.
This implies that $U_{k}$ is a basic open set in $\mathcal{T}_{I}$, so
$C_{i}\subseteq\operatorname*{Int}_{I}C_{k}$. The desired conclusion follows.

For the reverse implication, let $x\in X$. Then there exists $j\geq0$ such
that $x\in\operatorname*{Int}_{I}C_{j}$, so there exists a basic open
neighborhood $r_{k\infty}^{-1}\left(  U_{k}\right)  $ of $x$ contained in
$C_{j}$. If $k\geq j$, then $r_{k\infty}^{-1}\left(  U_{k}\right)  \subseteq
C_{k}$, so $x$ is $k$-insulated. If $k<j$, then $r_{kj}^{-1}\left(
U_{k}\right)  $ is open in $C_{j}$ and $r_{j\infty}^{-1}\left(  r_{kj}%
^{-1}\left(  U_{k}\right)  \right)  =r_{k\infty}^{-1}\left(  U_{k}\right)
\subseteq C_{j}$, so $x$ is $j$-insulated.

Next we show \ref{Item 2 of TFAE})$\Rightarrow$\ref{Item 3 of TFAE}). We
already know $\mathcal{T}_{I}\subseteq\mathcal{T}_{D}$, so let $V\in
\mathcal{T}_{D}$ and $x\in V$. By hypothesis, there is a $j\geq0$ and an open
neighborhood $U_{j}$ of $x$ in $C_{j}$ such that $r_{j\infty}^{-1}\left(
U_{j}\right)  =U_{j}$. Since $V\in\mathcal{T}_{D}$, $V\cap C_{j}$ is open in
$C_{j}$, so $V_{j}:=V\cap U_{j}$ is open in $C_{j}$. Moreover, $r_{j\infty
}^{-1}\left(  V_{j}\right)  =V_{j}$, so $V_{j}$ is a basic open set in
$\mathcal{T}_{I}$. Since $x\in V_{j}\subseteq V$, $V\in\mathcal{T}_{I}$.

Lastly, we use a contrapositive argument to show \ref{Item 3 of TFAE}%
)$\Rightarrow$\ref{Item 1 of TFAE}). Suppose $x\in X$ is not insulated, and
let $i_{0}$ be the minimum integer for which $x\in C_{i_{0}}$. For each $k\geq
i_{0}$, let $\mathcal{B}_{k}=\{B_{k,j}\}_{j=k}^{\infty}$ be a countable
neighborhood basis at $x$ of open subsets in $C_{k}$, and for each $k\geq
i_{0}$ and $j\geq k$, let $B_{k,j}^{\prime}=r_{k\infty}^{-1}\left(
B_{k,j}\right)  $ be the corresponding basic open neighborhood of $x$ in
$\left(  X,\mathcal{T}_{I}\right)  $. If $\mathcal{B}_{k}^{\prime}$ denotes
$\{B_{k,j}^{\prime}\}_{j=k}^{\infty}$ then $\cup_{k=i_{0}}^{\infty}%
\mathcal{B}_{k}^{\prime}$ is a neighborhood basis at $x$ in $\left(
X,\mathcal{T}_{I}\right)  $. Since $x$ is \ not insulated, for each $k\geq
i_{0}$ and $j\geq k$, we may choose $x_{kj}\in B_{k,j}^{\prime}-C_{j}$. Let
$A$ be the set of all such $x_{k,j}$ and notice that $x\notin A$ and $A\cap
C_{i}$ is finite, for all $i\geq0$. As such, $A$ is closed in $\left(
X,\mathcal{T}_{D}\right)  $. But, since every element of $\cup_{k=i_{0}%
}^{\infty}\mathcal{B}_{k}^{\prime}$ intersects $A$, $A$ is not closed in
$\left(  X,\mathcal{T}_{I}\right)  $.
\end{proof}
\end{theorem}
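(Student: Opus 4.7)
The natural plan is to prove the cycle
\ref{Item 1 of TFAE}) $\Leftrightarrow$ \ref{Item 2 of TFAE}) $\Rightarrow$ \ref{Item 3 of TFAE}) $\Rightarrow$ \ref{Item 1 of TFAE}), using that $\mathcal{T}_I \subseteq \mathcal{T}_D$ and the description of basic open sets in $\mathcal{T}_I$ already in hand, plus \lemref{Lemma: x-insulated implies C_i insulated} to pass from pointwise insulation to insulation of the compacta $C_i$.

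For \ref{Item 1 of TFAE}) $\Rightarrow$ \ref{Item 2 of TFAE}), I would fix $i$ and apply \lemref{Lemma: x-insulated implies C_i insulated} to conclude that $C_i$ is $k$-insulated for some $k \geq i$; the witnessing $k$-stationary open neighborhood $U_k$ of $C_i$ in $C_k$ then equals $r_{k\infty}^{-1}(U_k)$, which is $\mathcal{T}_I$-open and contains $C_i$, so $C_i \subseteq \operatorname{Int}_I C_k$ and their union is $X$. Conversely, given $x \in \operatorname{Int}_I C_j$, take a subbasic (hence basic) $\mathcal{T}_I$-neighborhood $r_{k\infty}^{-1}(U_k)$ of $x$ inside $C_j$; after replacing $k$ by $\max\{j,k\}$ via \lemref{Lemma: j-stationary properties} and using that $r_{kj}^{-1}(U_k)$ is open in $C_j$ when $k < j$, one gets a stationary open neighborhood of $x$ in some $C_\ell$.

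For \ref{Item 2 of TFAE}) $\Rightarrow$ \ref{Item 3 of TFAE}), it remains to check $\mathcal{T}_D \subseteq \mathcal{T}_I$. Given $V \in \mathcal{T}_D$ and $x \in V$, choose $j$ and a $j$-stationary open neighborhood $U_j$ of $x$ in $C_j$ using \ref{Item 2 of TFAE}). Then $V \cap C_j$ is open in $C_j$, so $V_j := V \cap U_j$ is open in $C_j$; since $V_j \subseteq U_j$ is $j$-stationary by \lemref{Lemma: j-stationary properties}, we have $V_j = r_{j\infty}^{-1}(V_j) \in \mathcal{T}_I$, and $x \in V_j \subseteq V$.

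The main obstacle, as expected, is the contrapositive \ref{Item 3 of TFAE}) $\Rightarrow$ \ref{Item 1 of TFAE}): given a non-insulated point $x$, I need to produce a set $A$ that is $\mathcal{T}_D$-closed but not $\mathcal{T}_I$-closed. The plan is to exploit metrizability of each $C_k$ to pick a countable neighborhood basis $\{B_{k,j}\}_{j \geq k}$ of $x$ in $C_k$ for every $k \geq i_0$ (where $i_0$ is the least index with $x \in C_{i_0}$); the pullbacks $B'_{k,j} = r_{k\infty}^{-1}(B_{k,j})$ together form a neighborhood basis of $x$ in $X_I$. Because $x$ is not insulated, none of the $B'_{k,j}$ lies in $C_j$, so I can pick $x_{k,j} \in B'_{k,j} \setminus C_j$. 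Setting $A = \{x_{k,j}\}$, each intersection $A \cap C_i$ is finite by construction, so $A$ is $\mathcal{T}_D$-closed; yet every element of the neighborhood basis of $x$ in $X_I$ meets $A$ while $x \notin A$, so $A$ is not $\mathcal{T}_I$-closed. The one subtlety is bookkeeping in the indexing, making sure the countable union of $\{B'_{k,j}\}$ over $k \geq i_0$ really is a neighborhood basis at $x$ (which follows because every basic $\mathcal{T}_I$-neighborhood of $x$ is of the form $r_{k\infty}^{-1}(U)$ for some open $U \ni x$ in $C_k$, and $U$ contains some $B_{k,j}$).
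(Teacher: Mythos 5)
Your proposal is correct and follows essentially the same route as the paper's own proof: the same cycle of implications, the same use of Lemma \ref{Lemma: x-insulated implies C_i insulated} for \ref{Item 1 of TFAE})$\Rightarrow$\ref{Item 2 of TFAE}), the same intersection $V_j = V\cap U_j$ for \ref{Item 2 of TFAE})$\Rightarrow$\ref{Item 3 of TFAE}), and the same construction of a $\mathcal{T}_D$-closed but not $\mathcal{T}_I$-closed set $A=\{x_{k,j}\}$ for the contrapositive of \ref{Item 3 of TFAE})$\Rightarrow$\ref{Item 1 of TFAE}). No gaps to report.
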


\begin{corollary}
If any of the conditions in Theorem \ref{Theorem: TFAE with T_I = T_D} hold,
then $X=\cup_{i=0}^{\infty}\operatorname*{int}_{D}C_{i}$. The converse of this
assertion is false.
\end{corollary}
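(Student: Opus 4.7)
The plan is to split the corollary into its two assertions and handle them separately, the first by a one-line comparison of interiors and the second by exhibiting a concrete counterexample.

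For the forward implication, Lemma \ref{Lemma: general topology items} gives $\mathcal{T}_I \subseteq \mathcal{T}_D$, so every $\mathcal{T}_I$-open subset of $C_i$ is also $\mathcal{T}_D$-open. Consequently $\operatorname{Int}_I A \subseteq \operatorname{Int}_D A$ for every $A \subseteq X$. Applying this with $A = C_i$ and taking the union over $i$, condition (2) of Theorem \ref{Theorem: TFAE with T_I = T_D} (namely $X = \bigcup_i \operatorname{Int}_I C_i$) immediately upgrades to $X = \bigcup_i \operatorname{Int}_D C_i$. There is no obstacle in this direction.

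For the failure of the converse, I would exhibit a small discrete counterexample. Let $C_0 = \{0\}$ and, for each $i \geq 1$, let $C_i = \{0, 1, \ldots, i\}$ carry the discrete metric; define $r_i \colon C_i \to C_{i-1}$ by $r_i(i) = 0$ and $r_i|_{C_{i-1}} = \operatorname{id}$. These are continuous retractions between compact metric spaces, so the setup of the section applies. As a set, $X = \{0, 1, 2, \ldots\}$. Because each $C_i$ is discrete, every subset of $X$ meets every $C_i$ in an open subset, so $\mathcal{T}_D$ is the discrete topology on $X$. Hence each $C_i$ is itself $\mathcal{T}_D$-open, $\operatorname{Int}_D C_i = C_i$, and $X = \bigcup_i \operatorname{Int}_D C_i$.

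What remains, and is the only step requiring care, is verifying that the point $0$ is not insulated, so that condition (1) of Theorem \ref{Theorem: TFAE with T_I = T_D} fails. Fix $k \geq 0$ and let $U \subseteq C_k$ be any open neighborhood of $0$. A short induction on $j - k$ shows that $r_{kj}(j) = 0$ for every $j > k$: the innermost factor $r_j$ sends $j$ to $0$, and every subsequent $r_m$ fixes $0$. Therefore $r_{k\infty}^{-1}(U) \supseteq \{k+1, k+2, \ldots\}$, which is not contained in $C_k$, so $r_{k\infty}^{-1}(U) \neq U$. Thus no open neighborhood of $0$ in any $C_k$ is stationary, so $0$ is not insulated; condition (1) fails while the interior union exhausts $X$, disproving the converse.
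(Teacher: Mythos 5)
Your proof is correct, and for the substantive half of the corollary it takes a genuinely different route from the paper. The forward implication is handled essentially as the paper does: the paper reads it off from condition (3) of Theorem \ref{Theorem: TFAE with T_I = T_D} (if $\mathcal{T}_{I}=\mathcal{T}_{D}$ the two interiors coincide), while you read it off from condition (2) together with the containment $\mathcal{T}_{I}\subseteq\mathcal{T}_{D}$ of Lemma \ref{Lemma: general topology items}; these are the same triviality. For the failure of the converse, the paper simply points back to part iii) of Example \ref{Example: compactifications of a ray}: the intervals $C_{i}=[0,i]$ with the bonding retraction wrapping $[i-1,i]$ onto $[0,i-1]$, so that $X_{D}\cong\lbrack0,\infty)$ is a locally finite complex (whence $X=\cup_{i}\operatorname{Int}_{D}C_{i}$) while the sequence fails to be fully insulated and $X_{I}$ carries the exotic topology inherited from the bucket handle. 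You instead construct a zero-dimensional example from scratch: finite discrete spaces $C_{i}=\{0,\dots,i\}$ with $r_{i}(i)=0$. Your verification is complete and correct: the $C_{i}$ are compact metric, the $r_{i}$ are continuous retractions, $\mathcal{T}_{D}$ is discrete so the $D$-interiors exhaust $X$, and the computation $r_{k\infty}(m)=0$ for all $m>k$ shows that every basic $\mathcal{T}_{I}$-neighborhood of $0$ in $C_{k}$ pulls back to contain $\{k+1,k+2,\dots\}$, so $0$ is not insulated and condition (1) fails. (One can even see directly that $\{0\}\in\mathcal{T}_{D}\setminus\mathcal{T}_{I}$, since $X_{I}$ is just a convergent sequence.) Your example is more elementary and entirely self-contained; the paper's choice buys economy and motivation by reusing an example that also illustrates, elsewhere in the article, the significance of the insulation conditions and how strange $X_{I}$ and the remainder $R$ can be even when $X_{D}$ is a locally finite simplicial complex.
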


\begin{proof}
The assertion is immediate from condition \ref{Item 3 of TFAE}), while part
iii) of Example \ref{Example: compactifications of a ray} in Section
\ref{Section: Examples} is a counterexample to the converse.
\end{proof}

When the conditions in Theorem \ref{Theorem: TFAE with T_I = T_D} hold, we say
that $\left\{  C_{i},r_{i}\right\}  $ is \emph{fully insulated}. Example
\ref{Example: tangent disks} from Section \ref{Section: Examples} shows that,
for this to be true, it is not enough to assume that each $C_{i}$ is stationary.

\subsection{Inverse and direct limits associated with
$X\label{Subsection: Inverse and direct limits associated with X}$}

We now provide further justification for associating the topologies
$\mathcal{T}_{D}$ and $\mathcal{T}_{I}$ with the direct and inverse sequences
(\ref{direct sequence of inclusions}) and
(\ref{inverse sequence of retractions}). In particular, we look at the roles
played by the limits of those sequences.

The \emph{direct limit} of (\ref{direct sequence of inclusions}) can be
defined by
\[
\underrightarrow{\lim}\left\{  C_{i},s_{i}\right\}  :=(\sqcup_{i=0}^{\infty
}C_{i}\times\left\{  i\right\}  )/\sim
\]
where $\left(  x.i\right)  \sim\left(  y,j\right)  $ if there exist $k\geq
\max\left\{  i,j\right\}  $ such that $s_{ik}\left(  x\right)  =s_{jk}\left(
y\right)  $. The \emph{direct limit topology }is the quotient topology induced
by%
\[
q:\sqcup_{i=0}^{\infty}C_{i}\times\left\{  i\right\}  \rightarrow(\sqcup
_{i=0}^{\infty}C_{i}\times\left\{  i\right\}  )/\sim
\]
where the left-hand side is given the disjoint union topology. In the case at
hand, there is a bijection $f:X\rightarrow\underrightarrow{\lim}\left\{
C_{i},s_{i}\right\}  $ taking $x\in X$ to the equivalence class $\left[
\left(  x,i\right)  \right]  $, where $i$ is chosen sufficiently large that
$x\in C_{i}$. If we use $f$ to pull back the direct limit topology, we obtain
a topology on $X$ under which $U\subseteq X$ open if and only if $U\cap C_{i}$
is open in $C_{i}$ for all $i$. In other words, we arrive at $\mathcal{T}_{D}%
$, which we think of, informally, as the direct limit topology on $X$.

There is a similar, but more subtle, relationship between $\mathcal{T}_{I}$
and the inverse limit of (\ref{inverse sequence of retractions}). Recall that%
\[
\underleftarrow{\lim}\left\{  C_{i},r_{i}\right\}  :=\left\{  \left(
x_{0},x_{1},x_{2},\cdots\right)  \mid r_{i}\left(  x_{i}\right)
=x_{i-1}\text{ for all }i>0\right\}
\]
is topologized as a subspace of $%
{\textstyle\prod_{i=0}^{\infty}}
C_{i}$, where the latter is given the product topology. There is a natural
injection $e:X\rightarrow\underleftarrow{\lim}\left\{  C_{i},r_{i}\right\}  $
defined by $e\left(  x\right)  =\left(  r_{i\infty}\left(  x\right)  \right)
_{i=0}^{\infty}$. Under both $\mathcal{T}_{D}$ and $\mathcal{T}_{I}$, the
coordinate function are continuous, so $e$ is a continuous injection.

\begin{theorem}
\label{Theorem: e is an embedding into the inverse limit}The function
$e:X\rightarrow\underleftarrow{\lim}\left\{  C_{i},r_{i}\right\}  $ is an
embedding if and only if $X$ is given the topology $\mathcal{T}_{I}$; $e$ need
not be surjective, but $e\left(  X\right)  $ is dense in $\underleftarrow{\lim
}\left\{  C_{i},r_{i}\right\}  $.
\end{theorem}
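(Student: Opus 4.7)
The plan rests on the single identity $\pi_{i}\circ e=r_{i\infty}$, where $\pi_{i}:\prod_{j}C_{j}\to C_{i}$ is the coordinate projection. Consequently, the subspace-topology subbasis $\{\pi_{i}^{-1}(U)\cap\underleftarrow{\lim}\{C_{i},r_{i}\}\}$ on $\underleftarrow{\lim}\{C_{i},r_{i}\}$ pulls back through $e$ to exactly the subbasis $\{r_{i\infty}^{-1}(U)\}$ of $\mathcal{T}_{I}$, so both halves of the equivalence reduce to this matching.

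For $(\Leftarrow)$, I would check that $e:X_{I}\to\underleftarrow{\lim}\{C_{i},r_{i}\}$ is continuous by the universal property of the initial topology $\mathcal{T}_{I}$ (each $\pi_{i}\circ e=r_{i\infty}$ is $\mathcal{T}_{I}$-continuous by definition); injective by choosing $i$ large enough that $x,y\in C_{i}$, so that $r_{i\infty}(x)=x$ and $r_{i\infty}(y)=y$, whence $e(x)=e(y)$ forces $x=y$; and open onto its image via the identity $e(r_{i\infty}^{-1}(U))=e(X)\cap\pi_{i}^{-1}(U)$, which sends basic $\mathcal{T}_{I}$-open sets to basic open sets of $e(X)$. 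Conversely, if $e$ is an embedding, then the topology on $X$ coincides with the pullback of the subspace topology on $e(X)$, whose subbasis pulls back to $\{r_{i\infty}^{-1}(U)\}$---the defining subbasis of $\mathcal{T}_{I}$.

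For density, I would take a basic open $V=\underleftarrow{\lim}\{C_{i},r_{i}\}\cap\prod_{i}U_{i}$ with $U_{i}=C_{i}$ for $i>N$ and a point $y=(y_{0},y_{1},\ldots)\in V$; then $e(y_{N})\in V$, since for $i\leq N$ the inverse-limit compatibility $r_{i}(y_{i})=y_{i-1}$ iterates to $r_{i\infty}(y_{N})=r_{iN}(y_{N})=y_{i}\in U_{i}$, while for $i>N$ the inclusion $y_{N}\in C_{N}\subseteq C_{i}$ combined with the retraction property gives $r_{i\infty}(y_{N})=y_{N}\in C_{i}=U_{i}$. For failure of surjectivity, I would point to $C_{i}=[0,i]$ with $r_{i}(t)=\min(t,i-1)$: the sequence $(0,1,2,\ldots)$ lies in the inverse limit, yet no $x\in X=[0,\infty)$ can produce it, because every $e(x)$ is eventually constant.

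The argument is largely formal once the identity $\pi_{i}\circ e=r_{i\infty}$ is set. The one place needing bookkeeping care is the distinction between $r_{i\infty}$ acting as the identity on $C_{j}\subseteq C_{i}$ when $j\leq i$ and as $r_{ij}$ when $j\geq i$---a distinction that the density argument relies on and that must be tracked carefully across the two coordinate ranges.
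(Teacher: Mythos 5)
Your proposal is correct and follows essentially the same route as the paper: both hinge on the identity $\pi_{i}\circ e=r_{i\infty}$, which gives $e\left(r_{i\infty}^{-1}\left(U\right)\right)=e\left(X\right)\cap\pi_{i}^{-1}\left(U\right)$ and hence that $e$ carries the $\mathcal{T}_{I}$-basis onto the subspace-topology basis of $e\left(X\right)$, and both prove density by approximating a point of the inverse limit by the eventually constant sequences $e\left(y_{N}\right)$. Your explicit truncation argument for density and the concrete $\left[0,i\right]$ example for non-surjectivity are just slightly more detailed versions of what the paper states.
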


\begin{proof}
Clearly, there is a unique topology on $X$ under which $e$ is an embedding. We
show that $\mathcal{T}_{I}$ is that topology.

Let $U=r_{j\infty}^{-1}\left(  V_{j}\right)  $ be a basic open set in
$\mathcal{T}_{I}$, where $V_{j}$ is an open subset of $C_{j}$, and let
$\pi_{j}:%
{\textstyle\prod_{i=0}^{\infty}}
C_{i}\rightarrow C_{j}$ be the projection map. Then
\begin{align*}
e\left(  U\right)   &  =\left\{  e\left(  x\right)  \mid x\in X\text{ and
}r_{j\infty}\left(  x\right)  \in V_{j}\right\} \\
&  =e\left(  X\right)  \cap\pi_{j}^{-1}\left(  V_{j}\right) \\
&  =e\left(  X\right)  \cap\left(  \underleftarrow{\lim}\left\{  C_{i}%
,r_{i}\right\}  \cap\pi_{i}^{-1}\left(  V_{i}\right)  \right)
\end{align*}
Since $\pi_{i}^{-1}\left(  V_{i}\right)  $ is open in the product topology,
then $\underleftarrow{\lim}\left\{  C_{i},r_{i}\right\}  \cap\pi_{i}%
^{-1}\left(  V_{i}\right)  $ is open in $\underleftarrow{\lim}\left\{
C_{i},r_{i}\right\}  $, so $e\left(  U\right)  $ is open in $e\left(
X\right)  $. Therefore, the corestriction of $e$ to $e\left(  X\right)  $ is
an open map, hence a homeomorphism.

To see that $e\left(  X\right)  $ is dense in $\underleftarrow{\lim}\left\{
C_{i},r_{i}\right\}  $, notice that if $x\in C_{i}$ then $r_{j\infty}\left(
x\right)  =x$ for all $j\geq i$. As such, $e\left(  X\right)  $ is the set of
all eventually constant sequences $\left(  x_{0},\cdots,x_{k-1},c,c,c,\cdots
\right)  $ such that $c\in C_{k}$, $x_{k-1}=r_{k}\left(  c\right)  $, and
$x_{i}=r_{i+1}\left(  x_{i+1}\right)  $ for all $i<k-1$. Clearly, every
element of $\underleftarrow{\lim}\left\{  C_{i},r_{i}\right\}  $ is the limit
of a sequence of such points.

Non-surjectivity of $e$ occurs in most of the examples presented in Section
\ref{Section: Examples}.
\end{proof}

In most cases of interest to us, $X_{I}$ (and therefore $X_{D}$) will be
noncompact. We have assumed that each $C_{i}$ is a compact metric space, so $%
{\textstyle\prod_{i=0}^{\infty}}
C_{i}$ is compact and metrizable and $\underleftarrow{\lim}\left\{
C_{i},r_{i}\right\}  $ is a closed subset of $%
{\textstyle\prod_{i=0}^{\infty}}
C_{i}$. See, for example, \cite{Dug78}. If we adopt a standard convention,
that a \emph{compactification} of a space $S$ is a compact Hausdorff space
that contains $S$ as a dense subspace, then $\underleftarrow{\lim}\left\{
C_{i},r_{i}\right\}  $ is a compactification of $e\left(  X\right)  $. Letting
$R:=\underleftarrow{\lim}\left\{  C_{i},r_{i}\right\}  -e\left(  X\right)  $,
we have
\[
\underleftarrow{\lim}\left\{  C_{i},r_{i}\right\}  =\overline{e\left(
X\right)  }=e\left(  X\right)  \sqcup R
\]
By following a standard procedure we arrive at:

\begin{corollary}
The space $X_{I}$ is separable and metrizable and admits a compactification
$\overline{X_{I}}=X_{I}\sqcup R$ which is homeomorphic to
$\underleftarrow{\lim}\left\{  C_{i},r_{i}\right\}  $.
\end{corollary}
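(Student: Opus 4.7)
The plan is to assemble the Corollary directly from the preceding Theorem together with standard facts about countable products of compact metric spaces. First I would observe that $\prod_{i=0}^{\infty} C_i$, being a countable product of compact metric spaces, is compact and metrizable (e.g.\ by the usual weighted-sum metric), and that $\underleftarrow{\lim}\{C_i,r_i\}$ is described as the set of points satisfying the closed conditions $\pi_{i-1} = r_i \circ \pi_i$ for each $i\ge 1$; hence it is a closed subspace of $\prod C_i$ and is itself compact and metrizable. This gives us a compact Hausdorff (indeed, metrizable) ambient space.

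Next I would invoke Theorem~\ref{Theorem: e is an embedding into the inverse limit}, which tells us that $e\colon X_I \to \underleftarrow{\lim}\{C_i,r_i\}$ is an embedding with dense image. Consequently $X_I$ is homeomorphic to a subspace of a separable metrizable space, so $X_I$ is itself separable and metrizable—this handles the first assertion. For the compactification claim, the \emph{standard procedure} is to transport the topology back to a disjoint union: set $R := \underleftarrow{\lim}\{C_i,r_i\} - e(X)$, form the abstract disjoint union $\overline{X_I} := X_I \sqcup R$, and extend $e$ to a bijection
\[
\bar{e}\colon \overline{X_I} \longrightarrow \underleftarrow{\lim}\{C_i,r_i\}
\]
by $\bar{e}|_{X_I} = e$ and $\bar{e}|_R = \mathrm{id}_R$. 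Declare a set $U \subseteq \overline{X_I}$ open iff $\bar{e}(U)$ is open in the inverse limit. This makes $\bar{e}$ a homeomorphism by construction, so $\overline{X_I}$ is compact and Hausdorff; and since $e$ is already a homeomorphism onto its image with the dense subspace topology, the topology on $X_I$ as a subspace of $\overline{X_I}$ agrees with $\mathcal{T}_I$. Density of $e(X)$ in the inverse limit transports to density of $X_I$ in $\overline{X_I}$, completing the verification that $\overline{X_I}$ is a compactification of $X_I$ of the asserted form.

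There is no real obstacle here—the statement is essentially a bookkeeping consequence of Theorem~\ref{Theorem: e is an embedding into the inverse limit}. The only point that deserves care is the cosmetic one of distinguishing $X_I$ from $e(X_I)$: one wants $X_I$ (rather than its homeomorphic copy $e(X_I)$) to sit inside the compactification literally as a subset, which is why the relabeling via $\bar{e}$ is spelled out above rather than simply writing $\overline{X_I} = \underleftarrow{\lim}\{C_i,r_i\}$.
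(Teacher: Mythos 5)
Your proposal is correct and follows essentially the same route as the paper: both establish separability and metrizability of $X_{I}$ by viewing $e\left(  X\right)  $ as a subspace of the compact metrizable product $\prod_{i=0}^{\infty}C_{i}$, and both build $\overline{X_{I}}$ by extending $e$ to a bijection $\overline{e}$ that is the identity on $R$ and pulling back the topology. The only difference is that you spell out a few facts (closedness of the inverse limit in the product, density transport) that the paper had already recorded in the surrounding discussion.
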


\begin{proof}
As a subspace of the compact metrizable space $%
{\textstyle\prod_{i=0}^{\infty}}
C_{i}$, $e\left(  X\right)  $ is separable and metrizable---so $X_{I}$ is
separable and metrizable. To obtain $\overline{X_{I}}=X_{I}\sqcup R$, extend
$e:X_{I}\rightarrow e\left(  X\right)  $ to the function $\overline
{e}:\overline{X_{I}}\rightarrow\overline{e\left(  X\right)  }$ which is the
identity on $R$. Then topologize $\overline{X}$ by declaring $U\subseteq
\overline{X_{I}}$ to be open if and only if $\overline{e}\left(  U\right)  $
is open in $\overline{e\left(  X\right)  }$.
\end{proof}

Under the most general of the circumstances described so far, $X_{I}$ and
$\overline{X_{I}}$ can be quite unusual. Since we are primarily interested in
locally compact metric spaces and their compactifications, the following
observation is relevant.

\begin{proposition}
\label{Proposition: local compactness of X}For an arbitrary compactification
$\overline{X}$ of a Hausdorff space $X$, $X$ is open in $\overline{X}$ if and
only if it is locally compact. For the compactifications $\overline{e\left(
X\right)  }=e\left(  X\right)  \sqcup R$ and $\overline{X_{I}}=X_{I}\sqcup R$
described above, these conditions hold whenever the conditions in Theorem
\ref{Theorem: TFAE with T_I = T_D} are satisfied.
\end{proposition}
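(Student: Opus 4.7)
The plan is to prove the two assertions in sequence. The first is the classical characterization of locally compact dense subspaces of compact Hausdorff spaces, and the second follows immediately from the first by combining it with Theorem~\ref{Theorem: TFAE with T_I = T_D}.

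For the general assertion, suppose first that $X$ is open in $\overline{X}$. Given $x \in X$, I would invoke regularity of the compact Hausdorff space $\overline{X}$ to produce an open neighborhood $W'$ of $x$ with $x \in W' \subseteq \overline{W'} \subseteq X$; then $\overline{W'}$ is a compact neighborhood of $x$ in $X$, witnessing local compactness. Conversely, assume $X$ is locally compact, fix $x \in X$, and choose a compact neighborhood $K$ of $x$ in $X$ with $x \in \operatorname{int}_X(K) \subseteq K$. Because $K$ is compact and $\overline{X}$ is Hausdorff, $K$ is closed in $\overline{X}$. Writing $\operatorname{int}_X(K) = W \cap X$ for some open $W \subseteq \overline{X}$, I would then argue that $W \subseteq X$: since $X$ is dense in $\overline{X}$ and $W$ is open, $W \cap X$ is dense in $W$, which forces $W \subseteq \overline{W \cap X} \subseteq K \subseteq X$. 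Hence $x \in W \subseteq X$, so $X$ is open in $\overline{X}$.

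For the second assertion, I would apply the equivalence in Theorem~\ref{Theorem: TFAE with T_I = T_D} to obtain $X = \bigcup_{i=0}^{\infty} \operatorname{Int}_I C_i$. Since Lemma~\ref{Lemma: general topology items} ensures that each $C_i$ inherits its original compact topology as a subspace of $X_I$, every point of $X_I$ lies in the $X_I$-interior of the compact set $C_i$. Thus $X_I$ is locally compact, and the first assertion yields that $X_I$ is open in $\overline{X_I}$. The corresponding fact about $e(X)$ inside $\overline{e(X)}$ is then automatic: by Theorem~\ref{Theorem: e is an embedding into the inverse limit}, $e$ is a homeomorphism onto its image, so local compactness (and hence openness in the compactification) transfers across.

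The only substantive point is the converse direction of the first assertion, where the open set $W$ that realizes $\operatorname{int}_X(K)$ in $\overline{X}$ could \emph{a priori} extend beyond $X$; the argument resolves this by combining density of $X$ in $\overline{X}$ with closedness of the compact set $K$ in $\overline{X}$ to trap $W$ inside $K$. Everything else is bookkeeping from the previously established results.
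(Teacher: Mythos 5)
Your proof is correct, and its overall architecture matches the paper's: first establish the classical fact that a dense subspace of a compact Hausdorff space is open if and only if it is locally compact, then verify local compactness of $X_I$ from the hypotheses of Theorem \ref{Theorem: TFAE with T_I = T_D}. The differences are in the details, and they are worth noting. For the classical fact, the paper handles the forward direction by quoting that local compactness is a local property of locally compact Hausdorff spaces and simply cites Dugundji for the (harder) reverse direction; you instead give a complete elementary argument, and your density trick --- $W \subseteq \overline{W \cap X} \subseteq K \subseteq X$ using that the compact neighborhood $K$ is closed in $\overline{X}$ --- is exactly the right way to trap the open set $W$ inside $X$, so your write-up is self-contained where the paper's is not. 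For the second assertion, the paper invokes condition (1) of Theorem \ref{Theorem: TFAE with T_I = T_D} (insulation): it takes a $j$-stationary open neighborhood $U_j$ of $x$ in $C_j$, shrinks it to a compact neighborhood $V_j \subseteq U_j$, and uses stationarity of subsets of stationary sets to conclude $V_j$ is a compact neighborhood of $x$ in $X_I$. You instead invoke the equivalent condition (2), $X = \bigcup_{i} \operatorname{Int}_I C_i$, and observe that $C_i$ itself (compact as a subspace of $X_I$ by Lemma \ref{Lemma: general topology items}) is already a compact neighborhood of each point of its $\mathcal{T}_I$-interior. Your version is marginally shorter; the paper's version produces a small stationary compact neighborhood, which is the kind of object used elsewhere in the paper. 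Both are valid, and your handling of the transfer to $e(X)$ via the embedding of Theorem \ref{Theorem: e is an embedding into the inverse limit} is fine.
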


\begin{proof}
For the forward implication of the initial assertion, note that $\overline{X}$
is locally compact and Hausdorff. Hence, local compactness is truly a local
property \cite[Th.29.2]{Mun00}, so every open subset of $\overline{X}$ is
locally compact. The reverse implication is well known but less obvious; see
\cite[XI.8.3]{Dug78}.

To prove the latter assertion, let $x\in X_{I}$ be arbitrary. By hypothesis,
there exists a $j\geq0$ and an open neighborhood $U_{j}$ of $x$ in $C_{j}$
such that $r_{j\infty}^{-1}\left(  U_{j}\right)  =U_{j}$. Since $C_{j}$ is
compact, it is locally compact, so $U_{j}$ contains a compact neighborhood
$V_{j}$ of $x$ in the space $C_{j}$. But $r_{j\infty}^{-1}\left(
V_{j}\right)  =V_{j}$, so $V_{j}$ is also a neighborhood of $x$ in $X_{I}$.
\end{proof}

\begin{remark}
Example \ref{Example: cone on a sequence 2} shows that the converse of the
second sentence of Proposition \ref{Proposition: local compactness of X} is
false. One might ask whether local compactness of $\left(  X,\mathcal{T}%
_{I}\right)  $ implies that $\mathcal{T}_{I}=\mathcal{T}_{D}$. The same
example shows that the answer is \textquotedblleft no\textquotedblright.
\end{remark}

\section{Examples\label{Section: Examples}}

In this section, we present a collection examples that fit the above setup and
exhibit a variety of properties. As additional topics are developed, we will
often refer back to these examples.

\begin{example}
\label{Example: cone on a sequence 1}Let $P=\left\{  p_{1},p_{2}%
,\cdots\right\}  $ be a sequence of points with the discrete topology. For
$i\geq1$, let $C_{i}$ be the cone on $\left\{  p_{1},\cdots,p_{i}\right\}  $
with cone point $q$ and let $C_{0}=\left\{  q\right\}  $. Then, as a set,
$X=\cup_{i=0}^{\infty}C_{i}$ is the cone on $P$, and we have a direct system
of inclusion maps%
\[
C_{0}\overset{s_{0}}{\longrightarrow}C_{1}\overset{s_{1}}{\longrightarrow
}C_{2}\overset{s_{2}}{\longrightarrow}\cdots
\]
For each $i>0$, let $r_{i}:C_{i-1}\leftarrow C_{i}$ be the retraction which
maps segment segment $\overline{qp}_{i}$ linearly onto $\overline{qp}_{i-1}$
by sending $p_{i}$ to $p_{i-1}$. This gives an inverse sequence
\[
C_{0}\overset{r_{1}}{\longleftarrow}C_{1}\overset{r_{2}}{\longleftarrow}%
C_{2}\overset{r_{3}}{\longleftarrow}\cdots
\]

With a little effort, one can show that $X_{D}$ does not have a countable
local basis at $q$, and hence is nonmetrizable. By contrast, $X_{I}$ is not
only metrizable, but planar. In fact, $\overline{X_{I}}=X_{I}\sqcup R$ is
homeomorphic to the left-hand compactum shown in Figure \ref{Figure 1}
with $R\approx(0,1]$ corresponding to the limiting arc minus the point $q$.
\end{example}

\begin{example}
\label{Example: cone on a sequence 2}Take the same collection $\left\{
C_{i}\right\}  $ as in the previous example, but now define $r_{i}^{\prime
}:C_{i-1}\leftarrow C_{i}$ to take the entire segment $\overline{qp}_{i}$ to
the point $q$. The direct system and $X_{D}$ remain the same, but with these
bonding maps, $X_{I}$ is homeomorphic to the planar continuum on the right in
Figure 1. In this case, $R$ is the empty set.
\end{example}

\begin{figure}[th]
\begin{center}
\includegraphics[scale=1.0]{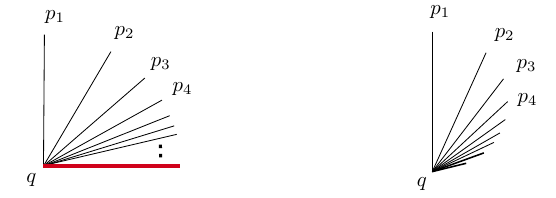} \label{Figure 1}
\end{center}
\caption{$\overline{X_{I}}=X_{I}\sqcup R$ in Examples
\ref{Example: cone on a sequence 1} (left) and
\ref{Example: cone on a sequence 2} (right)}%
\label{Figure 1}%
\end{figure}

\begin{example}
\label{Example: compactifications of a ray}For each $i\geq0$, let $C_{i}$ be
the closed interval $\left[  0,i\right]  $, subdivided into $i$ subintervals
of length $1$. Define three different sequences of retractions of $C_{i}$ onto
$C_{i-1}$.

\begin{itemize}
\item[i)] \label{Subexample: ray compactification with point}Let
$r_{i}:C_{i-1}\leftarrow C_{i}$ be the retraction that maps $\left[
i-1,i\right]  $ to its left endpoint.

\item[ii)] \label{Subexample: sin(1/x) curve}Let $r_{i}^{\prime}%
:C_{i-1}\leftarrow C_{i}$ be the retraction that maps $\left[  i-1,i\right]  $
linearly onto $\left[  i-2,i-1\right]  $ with $r_{i}\left(  i\right)  =i-1$; and

\item[iii)] \label{Subexample: bucket handle}let $r_{i}^{\prime\prime}%
:C_{i-1}\leftarrow C_{i}$ be the retraction that maps $\left[  i-1,i\right]  $
linearly onto $\left[  0,i-1\right]  $ with $r_{i}\left(  i\right)  =0$.
\end{itemize}

In all three cases, $X_{D}$ is homeomorphic to the standard ray $[0,\infty)$.
In case \ref{Subexample: ray compactification with point}, $X_{I}$ is also
homeomorphic to $[0,\infty)$, and $\overline{X_{I}}$ is the one-point
compactification $\left[  0,\infty\right]  $. In case
\ref{Subexample: sin(1/x) curve}, $X_{I}$ is again homeomorphic to
$[0,\infty)$, but $\overline{X_{I}}$ is a version of the \textquotedblleft
topologist's $\sin\left(  1/x\right)  $ curve\textquotedblright\ with $R$
corresponding to the limit arc.

In case \ref{Subexample: bucket handle}, $\overline{X_{I}}$ is homeomorphic to
a famous indecomposable continuum $B$, known as the \emph{bucket handle}\ or
the \emph{Brouwer-Janiszewski-Knaster continuum}. Here $R$ and $X_{I}$ are
both dense in $\overline{X_{I}}$, and the topology $\mathcal{T}_{I}$ on
$[0,\infty)$ is not the standard one. Open sets are most easily seen by taking
preimages of open subsets of $B$ under a rather exotic injection
$e:[0,\infty)\rightarrow B$ under which the sequence $\left\{  e\left(
n\right)  \right\}  _{n\in%
\mathbb{Z}
^{+}}$ converges to $e\left(  0\right)  $.\footnote{We are abusing notion
slightly by calling this map $e$.} As such, every neighborhood of $0$ in
$\mathcal{T}_{I}$ contains an unbounded sequence of open intervals. Other
points in $\left(  [0,\infty),\mathcal{T}_{I}\right)  $ have similar
neighborhoods. See Figure 2. One takeaway from this example is that, even when
$X_{D}$ is metrizable---or even a locally finite simplicial complex---the
space $X_{I}$ can be quite different. The example also illustrates the
significance of the insulation conditions; sequences
\ref{Subexample: ray compactification with point} and
\ref{Subexample: sin(1/x) curve} are fully insulated while sequence
\ref{Subexample: bucket handle} is not.
\end{example}

\begin{figure}[th]
\begin{center}
\includegraphics[scale=1.0]{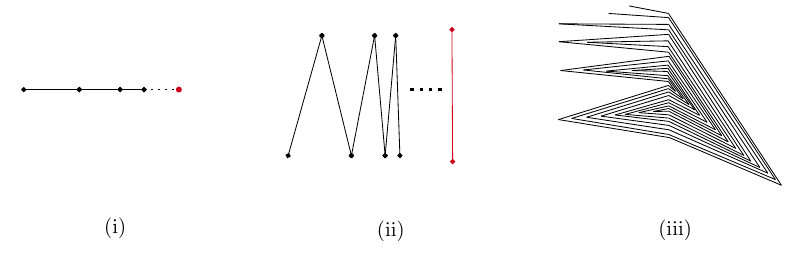} \label{Figure 2}
\end{center}
\caption{$\overline{X_{I}}=X_{I}\sqcup R$ in Example
\ref{Example: compactifications of a ray}}%
\label{Figure 2}%
\end{figure}

\begin{example}
\label{Example: infinite cube}For each integer $i\geq0$, let $I^{i}$ be the
standard $i$-cube
\[%
\begin{array}
[c]{ccc}%
\underbrace{\left[  0,1\right]  \times\cdots\left[  0,1\right]  } & \times &
0\times0\times\cdots\\
i\text{-times} &  &
\end{array}
\]
where the additional $0$-coordinates are added for notational convenience, and
so that $\left\{  I^{i}\right\}  _{i=0}^{\infty}$ forms a nested sequence. For
$i>0$, let $r_{i}:I^{i-1}\leftarrow I^{i}$ be the projection map. We may view
$X=\cup_{i=0}^{\infty}I^{i}$ as a subset $I_{0}^{\omega}$ of the countable
infinite product $I^{\omega}$; in particular, $I_{0}^{\omega}$ consists of all
sequences of elements of $\left[  0,1\right]  $ that end in a sequence of
$0$'s. An argument similar to the one alluded to in Example
\ref{Example: cone on a sequence 1} shows that, under the $\mathcal{T}_{D}$
(which is commonly used when $I_{0}^{\omega}$ is viewed as a CW complex),
$I_{0}^{\omega}$ is nonmetrizable. By contrast, $\mathcal{T}_{I}$ is the same
as the subspace topology induced by the product topology on $I^{\omega}$.
Additionally, $\overline{I_{0}^{\omega}}$ (as defined in Section
\ref{Subsection: Inverse and direct limits associated with X}) is equivalent
to taking the closure of $I_{0}^{\omega}$ in $I^{\omega}$. These observation
require some work since, by definition, $e\left(  X\right)  $ and
$\underleftarrow{\lim}\left\{  I^{i},r_{i}\right\}  $ are somewhat thin
subsets of $I^{0}\times I^{1}\times I^{2}\times\cdots$.

For readers interested in non-locally finite \emph{cube complexes},
$I_{0}^{\omega}$ is a beautifully simple example. In that context, it is
common to topologize $I_{0}^{\omega}$ using the path length metric $d$ induced
by the Euclidean metric on each $I^{i}$. Interestingly, that topology is
neither $\mathcal{T}_{D}$ nor $\mathcal{T}_{I}$. Being metric, it it is
clearly not $\mathcal{T}_{D}$. On the other hand, if $\mathbf{e}_{i}$ denotes
the point with all coordinates $0$ except for $1$ as its $\left(  i+1\right)
^{\text{st}}$ coordinate, then $\left\{  \mathbf{e}_{i}\right\}  $ converges
to $\left(  0,0,0,\cdots\right)  $ under $\mathcal{T}_{I}$, but has no limit
points under $d$ since $d\left(  \mathbf{e}_{i},\mathbf{e}_{j}\right)
=\sqrt{2}$ for all $i\neq j$.
\end{example}

\begin{example}
\label{Example: tangent disks}Let $C_{0}$ be the origin $\mathbf{0}$ in $%
\mathbb{R}
^{2}$, $C_{j}$ the closed circular disk of radius $j$ centered at $\left(
0,j\right)  $, and $r_{j}:C_{j-1}\leftarrow C_{j}$ the map taking $x$ to the
first point where the ray $\overrightarrow{x\mathbf{0}}$ intersects $C_{j-1}$.
Then $X_{I}$ is the open upper half-plane plus the point $\mathbf{0}$ with the
subspace topology, and $\overline{X_{I}}$ can be seen as the result of adding
an open interval $\left(  0,\pi\right)  $ to $X_{I}$, with the one-point for
each ray in $X_{I}$ emanating from $\mathbf{0}$. Another way to envision this
example is to embed the construction in the unit disk with $\left(
0,-1\right)  $ as the point of tangency. In this approach, $\overline{X_{I}}$
can be realized as the unit disk and the remainder $R$ as $\mathbb{S}%
^{1}-\{\left(  0,-1\right)  \}$.

In this example, each $C_{i}$ is stationary but not insulated. As a
consequence, Theorem \ref{Theorem: TFAE with T_I = T_D} cannot be applied, and
we end up with a space $X_{I}$ that is not locally compact and not an open
subset of $\overline{X_{I}}$. The reader is invited to compare $\mathcal{T}%
_{I}$ and $\mathcal{T}_{D}$.
\end{example}

\begin{example}
Let $C_{0}$ be origin and, for $j>0$, let $C_{j}$ be the convex hull of
\[
A_{j}=\left\{  \exp\left(  \frac{k\pi i}{2^{j-1}}\right)  \mid k=0,1,2,\cdots
,2^{j}-1\right\}
\]
in $%
\mathbb{R}
^{2}$, and $r_{j}:C_{j}\rightarrow C_{j-1}$ the radial projection of $C_{j}$
onto $C_{j-1}$. Then $X_{I}$ is the open unit disk together with the countable
dense set $\cup_{j\geq1}A_{j}$ of points on the unit circle (with the subspace
topology); and $\overline{X_{I}}$ is homeomorphic to the closed disk. As in
the previous example, $X_{I}$ is not locally compact and is not open in
$\overline{X_{I}}$.
\end{example}

\begin{example}
\label{Example: CAT(0) spaces}Let $\left(  Y,d\right)  $ be a proper CAT(0)
space and $y_{0}$ an arbitrary base point. Let $C_{0}=\left\{  y_{0}\right\}
$ and, for each $i>0$, $C_{i}=B_{d}\left[  y_{0},i\right]  $, the closed ball
of radius $i$. Let $r_{i}:C_{i-1}\leftarrow C_{i}$ be the geodesic retraction
taking $y\in C_{i}$ to the nearest point in $C_{i-1}$. As a set, $X=Y$, and
since each $C_{i}$ is insulated, $\mathcal{T}_{I}=\mathcal{T}_{D}$. With a
little effort, one sees that these topologies agree with original metric
topology and that $\overline{X_{I}}$ is homeomorphic to the compactification
$\overline{Y}:=Y\cup\partial_{\infty}Y$ obtained by adjoining the visual
sphere at infinity to $Y$ via the \textquotedblleft cone
topology\textquotedblright.

Note also that the only points of $C_{i}=B_{d}\left[  y_{0},i\right]  $ with
(possibly) non-singleton preimages in $C_{i+1}=B_{d}\left[  y_{0},i+1\right]
$ lie in the metric sphere $S_{d}[y_{0},i]$. It follows that
\[
R=\overline{X_{I}}-X_{I}=\underleftarrow{\lim}\left\{  C_{i},r_{i}\right\}
-e\left(  X\right)
\]
is equal to the inverse limit of the sequence
\[
\left\{  y_{0}\right\}  \overset{f_{1}}{\longleftarrow}S_{d}[y_{0}%
,1]\overset{f_{2}}{\longleftarrow}S_{d}[y_{0},2]\overset{f_{3}}{\longleftarrow
}\cdots
\]
where $f_{i}$ is the restriction of $r_{i}$. See \cite[\S II.8]{BrHa99} for
more details related to this construction. We will revisit to this example in
Section \ref{Subsection: Applications to CAT(0) spaces} and prove some
additional properties.
\end{example}

\begin{example}
\label{Example: Mapping telescope}Let
\begin{equation}
K_{0}\overset{f_{1}}{\longleftarrow}K_{1}\overset{f_{2}}{\longleftarrow}%
K_{2}\overset{f_{3}}{\longleftarrow}\cdots\label{inverse sequence of Ki}%
\end{equation}
be an inverse sequence of compact metric spaces and, for each integer $i>0$,
let
\[
\operatorname*{Map}\left(  f_{i}\right)  :=K_{i}\times\left[  i-1,i\right]
\sqcup K_{i-1}/\sim
\]
(the \emph{mapping cylinder}) be the quotient space, where\ $\sim$ is
generated by the rule $\left(  x,i-1\right)  \sim f_{i}\left(  x\right)  $. By
identifying $K_{i-1}$ with its image in $\operatorname*{Map}\left(
f_{i}\right)  $ and $K_{i}$ with the image of $K_{i}\times\left\{  i\right\}
$, we view $K_{i-1}$ and $K_{i}$ as subsets of $\operatorname*{Map}\left(
f_{i}\right)  $ and refer to them as the \emph{range end} and the \emph{domain
end }of $\operatorname*{Map}\left(  f_{i}\right)  $. There is a retraction
of\emph{ }$\operatorname*{Map}\left(  f_{i}\right)  $ onto $K_{i-1}$ (often
called the \emph{mapping cylinder collapse}) which sends $\left(  x,t\right)
$ to $f\left(  x\right)  $.

Let $C_{0}=K_{0}$ and for each $i>0$, let
\[
C_{i}=\operatorname*{Map}\left(  f_{1}\right)  \cup_{K_{1}}\operatorname*{Map}%
\left(  f_{2}\right)  \cup_{K_{2}}\cdots\cup_{K_{i-1}}\operatorname*{Map}%
\left(  f_{i}\right)
\]
By applying the mapping cylinder collapses, we get retractions $r_{i}%
:C_{i-1}\leftarrow C_{i}$ which can be assembled into an inverse sequence of
compact metrizable spaces%
\[
C_{0}\overset{r_{1}}{\longleftarrow}C_{1}\overset{r_{2}}{\longleftarrow}%
C_{2}\overset{r_{3}}{\longleftarrow}\cdots
\]
In this case
\[
X=\cup_{i=0}^{\infty}C_{i}=\operatorname*{Map}\left(  f_{1}\right)
\cup_{K_{1}}\operatorname*{Map}\left(  f_{2}\right)  \cup_{K_{2}}\cdots
\]
is called the \emph{inverse mapping telescope }of sequence
(\ref{inverse sequence of Ki}).

By reasoning similar to the previous example, $\mathcal{T}_{I}=\mathcal{T}%
_{D}$ and $R=\underleftarrow{\lim}\left\{  C_{i},r_{i}\right\}  -e\left(
X\right)  $ is the inverse limit of sequence (\ref{inverse sequence of Ki}).
We will return to this compactification in Section \ref{Section: Examples}.
\end{example}

\section{Expansions and collapses in the simplicial
category\label{Section: Expansions and collapses in the simplicial category}}

For our favorite applications of the above setting, we will need to place
conditions on the retraction maps in our inverse sequences. Necessity will
direct us to the notion of a \emph{compact collapse}---a generalization of a
\emph{simplicial collapse}. We will also need to all for infinitely many such
moves (or their inverses). Before proceeding to that level of generality, we
spend some time reviewing the classical theory and adjusting it to allow for
infinitely many moves.

\subsection{Simplicial complexes: preliminaries}

An \emph{abstract simplicial complex }$K$ is a pair $\left(  V,\mathcal{S}%
\right)  $ where $V$ is a set and $\mathcal{S}$ is a collection of nonempty
finite subsets of $V$ containing all singletons and satisfying the rule: if
$\sigma\in\mathcal{S}$ and $\varnothing\neq\tau\subseteq\sigma$, then $\tau
\in\mathcal{S}$. The elements of $V$ are called \emph{vertices} and the
elements of $\mathcal{S}$ the \emph{simplices} of $K$. (Typically, we do not
distinguish between a vertex $v$ and the simplex $\left\{  v\right\}  $.) If
$\tau\subseteq\sigma\in\mathcal{S}$, we call $\tau$ is a \emph{face} of
$\sigma$; if $\tau\subsetneq\sigma$, it is called a \emph{proper face}. If
$\sigma$ contains exactly $n+1$ vertices, it is called an $n$%
\emph{-dimensional simplex} (or simply an $n$\emph{-simplex}), and we
sometimes denote it by $\sigma^{n}$. A \emph{subcomplex} of $K$ is a
simplicial complex $L$, each of whose simplices is a simplex of $K$; this will
be denoted $L\leq K$. As a key example, if $\sigma$ is a simplex of $K$ then
$\partial\sigma\leq K$ is made up of all proper faces of $\sigma$. A
simplicial complex is \emph{finite }if its vertex set is finite; it is
\emph{locally finite} if each vertex belongs to only finitely many simplices.

We define the \emph{geometric realization }$\left\vert K\right\vert $ of
$K=\left(  V,\mathcal{S}\right)  $ (also called the \emph{polyhedron
corresponding to }$K$) to be a subset of the real vector space $W_{V}$ with
basis $V$. By endowing $W_{V}$ with the standard inner product metric $d_{V}$,
each subset $F\subseteq V$ containing $k<\infty$ elements, is the basis for a
subspace $W_{F}$ that is isometric to $%
\mathbb{R}
^{k}$ with the Euclidean metric. For $\sigma=\left\{  v_{0},\cdots
,v_{n}\right\}  \in\mathcal{S}$, the \emph{geometric realization} $\left\vert
\sigma\right\vert $ is the convex hull of $\left\{  v_{0},\cdots
,v_{n}\right\}  $ in $W_{\sigma}$ with the subspace metric. Now define
$\left\vert K\right\vert \subseteq W_{v}$ to be the union of the geometric
realizations of its simplices.

Note that%
\[
\left\vert \sigma\right\vert =\left\{  \left.  \sum\nolimits_{i=0}^{n}%
x_{i}v_{i}\right\vert 0\leq x_{i}\leq1\text{ and }\sum\nolimits_{i=0}^{n}%
x_{i}=1\right\}
\]
If $\mathbf{x}=\sum\nolimits_{i=0}^{n}x_{i}v_{i}\in\left\vert \sigma
\right\vert $, the $x_{i}$ are called the \emph{barycentric coordinates} of
$\mathbf{x}$. The \emph{barycenter} of $\left\vert \sigma\right\vert $ is the
point $\widehat{\sigma}=\sum\nolimits_{i=0}^{n}\frac{1}{n+1}v_{i}$. We will
refer to any space isometric to $\left\vert \sigma\right\vert $ as a
\emph{standard Euclidean }$n$\emph{-simplex}, in which case the distance
between any pair of distinct vertices is $\sqrt{2}$.\footnote{Some authors
rescale the metric so that the distance between pairs of vertices in a
standard Euclidean $n$-simplex is $1$. For our purposes, that is unnecessary.}

There are multiple ways to topologize $\left\vert K\right\vert $. One can give
$\left\vert K\right\vert $ the initial topology with respect to the individual
simplices; in other words, $U\subseteq\left\vert K\right\vert $ is open if and
only if $U\cap\left\vert \sigma\right\vert $ is open for every $\sigma
\in\mathcal{S}$. This is often called the \emph{weak topology} on $\left\vert
K\right\vert $, so we denote it $\mathcal{T}_{W}$. Alternatively, one can use
the inner product metric $d_{V}$ on $W_{V}$ to induce the subspace topology on
$\left\vert K\right\vert $. We call this the \emph{barycentric topology }and
denote it $\mathcal{T}_{B}$. It is a classical fact that $\mathcal{T}%
_{W}=\mathcal{T}_{B}$ if and only if $K$ is locally finite; otherwise,
$\mathcal{T}_{W}\supsetneq\mathcal{T}_{B}$ and the identity map
$\operatorname*{id}_{\left\vert K\right\vert }:\left(  \left\vert K\right\vert
,\mathcal{T}_{W}\right)  \rightarrow\left(  \left\vert K\right\vert
,\mathcal{T}_{B}\right)  $ is a homotopy equivalence. See \cite{Dow52}.

\begin{remark}
\label{Remark: path length metric vs barycentric metric}For the purposes of
metric geometry and geometric group theory, the \emph{path length metric on
}$\left\vert K\right\vert $ (also called the \emph{intrinsic metric} in
\cite[Chap.I.7]{BrHa99}) is often more useful than the barycentic metric. For
example, the latter is always bounded and thus ignores the large-scale
geometry of $\left\vert K\right\vert $. Nevertheless, for each $\mathbf{x}%
\in\left\vert K\right\vert $, there is an $\varepsilon>0$ such that balls
centered at $\mathbf{x}$ and having radius $<\varepsilon$ are identical under
the two metrics. As such they generate the same topologies.
\end{remark}

Abstract simplicial complexes $K_{1}$ and $K_{2}$ are \emph{isomorphic},
denoted $K_{1}\cong K_{2}$, if there exists a bijection $f:V_{1}\rightarrow
V_{2}$ between vertex sets that induces a bijection between the sets of
simplices. Clearly $f$ induces a corresponding homeomorphism $\left\vert
f\right\vert :\left\vert K_{1}\right\vert \rightarrow\left\vert K_{2}%
\right\vert $ provided $\left\vert K_{1}\right\vert $ and $\left\vert
K_{2}\right\vert $ are topologized in the same manner. Polyhedra $\left\vert
L_{1}\right\vert $ and $\left\vert L_{2}\right\vert $ are called
\emph{piecewise-linearly (PL) homeomorphic }if there exist isomorphic
subdivisions $L_{1}^{\prime}$ and $L_{2}^{\prime}$ of $L_{1}$ and $L_{2}$.

A \emph{triangulation} of a topological space $X$ is a homeomorphism
$h:\left\vert K\right\vert \rightarrow X$ for some simplicial complex $K$. In
that case, we say $X$ is \emph{triangulated} by $K$.

\subsection{Simplicial expansions and
collapses\label{Subsection: simplicial expansions and collapses}}

An \emph{elementary combinatorial collapse} $K\overset{e}{\searrow}L$ of $K$
onto a subcomplex $L$ can be performed when $K$ contains an $n$-simplex
$\sigma^{n}$ that has a \emph{free }face $\tau^{n-1}$ (meaning that
$\tau^{n-1}$ is a face of no other simplices of $K$). The subcomplex $L$ is
obtained by deleting $\sigma^{n}$ and $\tau^{n-1}$ from $K$. This
combinatorial move can be realized topologically by a particularly nice type
of deformation retraction of $\left\vert K\right\vert $ onto $\left\vert
L\right\vert $. Due to its importance in this paper, we will carefully
describe one such deformation retraction in the following paragraphs. Before
that, we note that the reverse of an elementary combinatorial collapse is
called an \emph{elementary combinatorial expansion} and denoted
$L\overset{e}{\nearrow}K$.

To define the deformation retraction alluded to above, label the vertices of
$\sigma^{n}$ by $\left\{  v_{0},\cdots,v_{n}\right\}  $ such that $\left\{
v_{0},\cdots,v_{n-1}\right\}  $ are the vertices of $\tau^{n-1}$, and (abusing
notation slightly) choose a standard Euclidean realization of $\left\vert
\sigma^{n}\right\vert $ in $%
\mathbb{R}
^{n}$ with $\left\vert \tau^{n-1}\right\vert \subseteq%
\mathbb{R}
^{n-1}\times\left\{  0\right\}  $ having barycenter at the origin and so that
$v_{n}=\left(  0,\cdots,0,\sqrt{\frac{n+1}{n}}\right)  $. Let $L^{\prime}$ be
the simplicial complex $\partial\sigma^{n}-\tau^{n-1}$. For each
$\mathbf{x\in}\left\vert \sigma^{n}\right\vert $, define $r\left(
\mathbf{x}\right)  $ be the unique point where the ray $\overrightarrow{-v_{n}%
\mathbf{x}}$ intersects $\left\vert L^{\prime}\right\vert $. Then define
$J:\left\vert \sigma^{n}\right\vert \times\lbrack0,1]\longrightarrow\left\vert
\sigma^{n}\right\vert $ to be the map for which $\left.  J\right\vert
_{\mathbf{x\times}\left[  0,1\right]  }$ is the constant speed linear path
from $\mathbf{x}$ to $r\left(  \mathbf{x}\right)  $. In particular, $J$ is
fixed on $\left\vert L^{\prime}\right\vert $, $J_{0}=\operatorname*{id}%
_{\left\vert \sigma^{n}\right\vert }$, and $J_{1}=r$ is a retraction of
$\left\vert \sigma^{n}\right\vert $ onto $\left\vert L^{\prime}\right\vert $.
Now define $H:\left\vert K\right\vert \times\lbrack0,1]\longrightarrow
\left\vert L\right\vert $ to be $J$ on $\left\vert \sigma^{n}\right\vert
\times\lbrack0,1]$ while keeping all points of $\left\vert L\right\vert $
fixed throughout. By a mild abuse of notation, we also let $r$ denote the map
$H_{1}:\left\vert K\right\vert \rightarrow\left\vert L\right\vert $ and call
it an \emph{elementary simplicial collapse}. The corresponding inclusion map
$s:\left\vert L\right\vert \hookrightarrow\left\vert K\right\vert $ is called
an \emph{elementary simplicial expansion}. It is easy to see that $H$ is
continuous under all of the above-mentioned topologies on $\left\vert
K\right\vert $.

Now define a \emph{combinatorial collapse} of $K$ to a subcomplex $L_{0}$ to
be a finite sequence of elementary combinatorial simplicial collapses
$K=L_{n}\overset{e}{\searrow}L_{n-1}\overset{e}{\searrow}\cdots
\overset{e}{\searrow}L_{0}$, a process denoted loosely by $K\searrow L_{0}$.
The concatenation of the corresponding deformation retractions, gives a
deformation retraction of $\left\vert K\right\vert $ onto $\left\vert
L_{0}\right\vert $ whose end result $r:\left\vert K\right\vert \rightarrow
\left\vert L_{0}\right\vert $ is a retraction that we call a \emph{simplicial
collapse}. Note that $r$ depends on the specific sequence of elementary
collapses used to take $K$ onto $L_{0}$ but, regardless of the sequence
chosen, it is a homotopy equivalence, with homotopy inverse $\left\vert
L_{0}\right\vert \hookrightarrow\left\vert K\right\vert $. Under these
circumstances we also write $L_{0}\overset{e}{\nearrow}L_{1}%
\overset{e}{\nearrow}\cdots\overset{e}{\nearrow}K$, or simply $L_{0}\nearrow
K$, and say that $L_{0}$ \emph{expands} \emph{combinatorially} to $K$ and that
$\left\vert L_{0}\right\vert $ \emph{expands simplicially} to $\left\vert
K\right\vert $.

\begin{example}
\label{Example: simplicially collapsing a product}It is a standard (and
useful) fact that, for every finite simplicial complex $K$, there exists a
triangulation of $\left\vert K\right\vert \times\left[  0,1\right]  $ by a
simplicial complex $L$ with subcomplexes $K\times\left\{  i\right\}  $
corresponding to $\left\vert K\right\vert \times\left\{  i\right\}  $ for
$i=0,1$, and such that $\left\vert L\right\vert $ collapses simplicially to
$\left\vert K\times\left\{  i\right\}  \right\vert $ for $i=0,1$.
\end{example}

A simplicial complex $K$ is \emph{combinatorially collapsible} if it contains
a vertex $v$ such that $K\searrow\left\{  v\right\}  $ or, equivalently, there
exists a sequence of elementary combinatorial expansions $\left\{  v\right\}
=L_{0}\overset{e}{\nearrow}L_{1}\overset{e}{\nearrow}\cdots
\overset{e}{\nearrow}L_{n}=K$ where each $L_{i}\leq K$. In this case, we say
that $\left\vert K\right\vert $ is \emph{simplicially collapsible}.

\begin{remark}
The geometric realization of a collapsible simplicial complex is clearly
contractible. In fact, collapsibility is often viewed as the combinatorial
analog of contractibility. Nevertheless, there are well known examples, such
as the \emph{topologist's dunce hat} and \emph{Bing's house with two rooms},
that are contractible but not collapsible. Also clear, from the above
definition, is that a collapsible simplicial complex is necessarily finite. We
will soon relax key definitions to eliminate that requirement.
\end{remark}

\begin{remark}
The subject of \emph{simple homotopy theory} defines a pair of finite
simplicial complexes $K$ and $L$ to be simple homotopy equivalent if,
beginning with $K$, there exists a finite sequence of elementary expansions
and collapses ending in $L$. As an example, the dunce hat and Bing's house
with two rooms---despite the fact that they are not collapsible---are simple
homotopy equivalent to a vertex. By contrast, there exist homotopy equivalent
simplicial complexes that are not simple homotopy equivalent. Siebenmann
\cite{Sie70a} has developed a simple homotopy theory for infinite complexes,
but it differs substantially from what we will do here. Simple homotopy theory
does not play a significant role in this paper.
\end{remark}

\subsection{Infinite simplicial expansions and
collapses\label{Subsection: Infinite simplicial expansions and collapses}}

We now extend the classical notions of collapses and expansions to allow for
infinitely many moves. We say that a simplicial complex $K$ \emph{collapses
combinatorially to} $L$, or equivalently, that $L$ \emph{expands
combinatorially to }$K$, if there exists a (possibly infinite) sequence of
elementary combinatorial simplicial expansions $L=L_{0}\overset{e}{\swarrow
}L_{1}\overset{e}{\swarrow}L_{2}\overset{e}{\swarrow}\cdots$ through
subcomplexes of $K$, such that $\cup_{i=0}^{\infty}L_{i}=K$. In this case, we
write $K\searrow L$ or $L\nearrow K$. When extra clarity is desired (and
infinitely many moves are used) we will write $K\overset{\infty}{\searrow}L$
or $L\overset{\infty}{\nearrow}K$. We say that $K$ is \emph{combinatorially
collapsible} if $\left\{  v\right\}  \nearrow K$, with the option of
specifying $\left\{  v\right\}  \overset{\infty}{\nearrow}K$, when infinitely
many expansions are required.

Although these new definitions are straightforward and natural, some oddities
result. For example, a simplicial complex can be collapsible without having a
free face. For example, the standard triangulation of $%
\mathbb{R}
$ is collapsible. Despite such oddities, the generalized definitions are
useful. To begin, notice that the assumption $L\nearrow K$ allows one to
construct a direct sequence of geometric realizations and inclusion maps%
\begin{equation}
\left\vert L\right\vert =\left\vert L_{0}\right\vert \overset{s_{1}%
}{\hookrightarrow}\left\vert L_{1}\right\vert \overset{s_{2}}{\hookrightarrow
}\left\vert L_{2}\right\vert \overset{s_{3}}{\hookrightarrow}\cdots
\label{direct sequence of simplicial complexes}%
\end{equation}
and an inverse sequence of (deformation) retractions.
\begin{equation}
\left\vert L\right\vert =\left\vert L_{0}\right\vert \overset{r_{1}%
}{\longleftarrow}\left\vert L_{1}\right\vert \overset{r_{2}}{\longleftarrow
}\left\vert L_{2}\right\vert \overset{r_{3}}{\longleftarrow}\cdots
\label{inverse sequence of simplicial complexes}%
\end{equation}
We say that $\left\vert K\right\vert $ \emph{collapses simplicially to}
$\left\vert L\right\vert $ and that $\left\vert L\right\vert $ \emph{expands
simplicially to }$\left\vert K\right\vert $. In this context, we always assume
that the retractions used in sequence
(\ref{inverse sequence of simplicial complexes}) are the simplicial collapses
defined earlier. When $\left\vert K\right\vert $ is endowed with the
barycentric or weak topology, a straightforward application of Whitehead's
Theorem \cite{Hat02} ensures that, $\left\vert L\right\vert \hookrightarrow
\left\vert K\right\vert $ is a homotopy equivalence---as it was in the finite
case. If we opt for the topology $\mathcal{T}_{I}$ on $\left\vert K\right\vert
$, which is often the case in this paper, proving the inclusion is a homotopy
equivalence is more subtle; that will be done in Section
\ref{Subsection: Homotopy negligible compactifications}. For now we observe
that, in a case of primary interest, everything works out nicely.

\begin{proposition}
\label{Proposition: equivalence of the topologies}Given the above setup, if
$K$ is locally finite, then the topologies $\mathcal{T}_{I}$, $\mathcal{T}%
_{D}$, $\mathcal{T}_{W}$ and $\mathcal{T}_{B}$ on $\left\vert K\right\vert $
are the same.
\end{proposition}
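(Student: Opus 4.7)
The plan is to reduce the four-topology statement to Theorem~\ref{Theorem: TFAE with T_I = T_D} together with the classical result of Dowker \cite{Dow52}, cited just before this subsection, that $\mathcal{T}_W = \mathcal{T}_B$ whenever $K$ is locally finite. Since Lemma~\ref{Lemma: general topology items} already supplies $\mathcal{T}_I \subseteq \mathcal{T}_D$, it remains only to prove $\mathcal{T}_D = \mathcal{T}_W$ and $\mathcal{T}_D = \mathcal{T}_I$.

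The equality $\mathcal{T}_D = \mathcal{T}_W$ is a routine comparison of coherent topologies. A set $U$ is open in $\mathcal{T}_D$ if and only if $U \cap |L_i|$ is open in $|L_i|$ for every $i$, while $U$ is open in $\mathcal{T}_W$ if and only if $U \cap |\sigma|$ is open in $|\sigma|$ for every simplex $\sigma$ of $K$. Since each $L_i$ is finite, its natural topology is coherent with respect to its own (finitely many) simplices; and since $\bigcup_i L_i = K$, every simplex of $K$ lies in some $L_i$. Combining these two observations gives the equality at once.

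To establish $\mathcal{T}_D = \mathcal{T}_I$, I would invoke Theorem~\ref{Theorem: TFAE with T_I = T_D} and verify that every point of $|K|$ is insulated. Given $x \in |K|$, let $\sigma$ be the simplex whose relative interior contains $x$. Local finiteness guarantees that only finitely many simplices of $K$ have $\sigma$ as a face, so I may pick $j$ large enough that all of them lie in $L_j$. My candidate neighborhood is the open star of $\sigma$ in $K$---the union of the relative interiors of those finitely many simplices---which is manifestly open in $|L_j|$ and contained in $|L_j|$. The heart of the argument is verifying that this open star is $j$-stationary, which I would handle inductively by analyzing one elementary collapse $|L_{i+1}| \to |L_i|$ with $i \geq j$ at a time. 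Its retraction is the identity off the two newly added open simplices and carries them into $\partial \sigma^n_{i+1}$; for any such point to retract into the open star of $\sigma$, some proper face of $\sigma^n_{i+1}$ would have to have $\sigma$ as a face, forcing $\sigma \leq \sigma^n_{i+1}$ and therefore placing $\sigma^n_{i+1}$ in $L_j$---contradicting $\sigma^n_{i+1} \in L_{i+1} - L_i$. This stationarity check, though short, is the one place where the combinatorics of simplicial collapses together with local finiteness genuinely enter the argument, and is where I would expect any hidden difficulty to lie.
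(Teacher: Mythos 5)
Your proposal is correct and follows essentially the same route as the paper: Dowker's theorem for $\mathcal{T}_{W}=\mathcal{T}_{B}$, the coherence of the topology of a finite complex with its simplices for $\mathcal{T}_{D}=\mathcal{T}_{W}$, and Theorem~\ref{Theorem: TFAE with T_I = T_D} applied to the open star of the carrier of $x$ (chosen inside an $|L_{j}|$ large enough to contain every simplex meeting $x$) for $\mathcal{T}_{D}=\mathcal{T}_{I}$. Your inductive stationarity check for the open star is the same combinatorial argument the paper runs by taking a minimal counterexample $k>j$, so there is nothing substantive to add.
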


\begin{proof}
By definition and \cite{Dow52}, we know $\mathcal{T}_{D}=\mathcal{T}%
_{W}=\mathcal{T}_{B}$, so it suffices to show that $\mathcal{T}_{D}%
=\mathcal{T}_{I}$. By Theorem \ref{Theorem: TFAE with T_I = T_D}, we need only
show that every $x\in\left\vert K\right\vert $ is insulated. Toward that end,
choose $j$ sufficiently large that every simplex of $\left\vert K\right\vert $
that contains $x$ is contained in $\left\vert L_{j}\right\vert $ and let $U$
be the open star neighborhood of $x$ in $\left\vert L_{j}\right\vert $ (and
hence in $\left\vert K\right\vert $). We claim that $r_{j\infty}^{-1}\left(
U\right)  =U$. If not, let $k>j$ be the smallest integer such that
$r_{jk}^{-1}\left(  U\right)  $ contains a point not in $U$. Then $\left\vert
L_{k}\right\vert =\left\vert L_{k-1}\right\vert \cup\tau$ and, by definition,
$r_{k}$ maps $\tau$ onto a union of its proper faces. By our assumption, one
of those faces lies in $U$, and hence contains $x$. But then, $\tau$ is a
simplex of $K$ that contains $x$ and hence lies in $U$, a contradiction.
\end{proof}

\section{Compact expansions and
collapses\label{Section: Topological expansions and collapses}}

We are ready to expand the notions of expansion and collapse beyond the
simplicial category. That, in itself, is not new. There are well-known
extensions to:

\begin{itemize}
\item polyhedral complexes, where collapses are performed on convex polyhedra;

\item $n$-manifolds, where topological $n$-balls are collapsed onto $\left(
n-1\right)  $-balls in their boundary (a process known as \emph{shelling}); and

\item CW complexes, where $k$-cells of various dimensions are collapsed onto
potentially crumpled $\left(  k-1\right)  $-cells in their attaching spheres.
\end{itemize}

\noindent The CW version is often viewed as the most general context for
defining expansions and collapses. We will argue otherwise, by formulating a
version that includes all of the above as special cases. It makes room for new
examples and allows some well known constructions to be viewed as special
cases of a broader theory.

\subsection{Track-faithful deformation retractions}

Let $H:C\times\left[  0,1\right]  \rightarrow C$ be a self-homotopy of a space
$C$ and, for each $t\in\left[  0,1\right]  $, let $H_{t}:C\rightarrow C$ be
defined by $H_{t}\left(  x\right)  =H\left(  x,t\right)  $. For our purposes,
we are only interested cases where $H_{0}=\operatorname*{id}_{C}$. We call $H$
a \emph{deformation retraction} of $C$ onto $A$ if $A=H_{1}\left(  C\right)  $
and $\left.  H_{t}\right\vert _{A}=\operatorname*{id}_{A}$ for all
$t$.\footnote{Some authors call this a \emph{strong} deformation
retraction.}In that case, we call the retraction of $C$ onto $A$, obtained by
restricting the range of $H_{1}$, the \emph{culmination }of $H$.

The \emph{track} of a point $x\in C$ under $H$ is the path $\lambda
_{x}:\left[  0,1\right]  \rightarrow C$ defined by $\lambda_{x}\left(
t\right)  =H_{t}\left(  x\right)  $. We sometimes abuse terminology by
conflating $\lambda_{x}$ with its image. Call $H$ \emph{track-faithful }if,
for every $x\in C$ and $t\in\left[  0,1\right]  $, $H_{1}\left(  H_{t}\left(
x\right)  \right)  =H_{1}\left(  x\right)  $. This condition is equivalent to
requiring that, if two tracks intersect, then they end at the same point. We
are particularly interested in track-faithful deformation retractions.

\begin{example}
\label{Example: product collapse}If $C=X\times\left[  0,1\right]  $ and
$H_{t}\left(  x,s\right)  =\left(  x,\left(  1-t\right)  s\right)  $ then $H$
is a track-faithful deformation retraction of $C$ onto $X\times\left\{
0\right\}  $.
\end{example}

\begin{example}
\label{Example: mapping cylinder collapse}For any map $f:X\rightarrow A$, the
deformation retraction described in Example \ref{Example: product collapse}
induces a track-faithful deformation retraction $\operatorname*{Map}\left(
f\right)  $ onto $A$. By contrast, whenever $f$ is a homotopy equivalence,
$\operatorname*{Map}\left(  f\right)  $ deformation retracts onto $X$, but in
some cases there is no track-faithful deformation retraction. An example with
$X=A=S^{1}$ can be obtained by comparing \cite{Gui01} with Theorem
\ref{Theorem: homotopy negligible compactifications} in this paper.
\end{example}

\begin{example}
\label{Example: elementary simplicial collapses}Given a top-dimensional face
$\tau^{n-1}$ of an $n$-simplex $\sigma^{n}$ and the subcomplex $L^{\prime
}=\partial\sigma^{n}-\tau^{n-1}$, the deformation retractions of $\left\vert
\sigma^{n}\right\vert $ onto $\left\vert L^{\prime}\right\vert $ and
$\left\vert K\right\vert $ onto $\left\vert L\right\vert $, described in
Section \ref{Section: Expansions and collapses in the simplicial category} and
used to define an elementary simplicial collapse, are track-faithful.
\end{example}

\begin{example}
\label{Example: elementary cubical collapse}Let $\square^{n}=\left[
-1,1\right]  ^{n}$, $B^{n-1}\ =\square^{n-1}\times\left\{  1\right\}  $, and
$J^{n-1}=\partial\square^{n}-\operatorname*{int}B^{n-1}$. Let $\mathbf{p}%
=\left(  0,\cdots,0,2\right)  $ and for each $\mathbf{x}\in\square^{n}$, let
$\mathbf{q}_{x}$ be the unique point $\overrightarrow{\mathbf{px}}\cap J$.
This gives a continuously varying family of segments $\overline{\mathbf{xq}%
}_{\mathbf{x}}\subseteq\square^{n}$ (some of them degenerate) connecting
points of $\square^{n}$ to points of $J^{n-1}$. By choosing constant speed
parametrization $\lambda_{\mathbf{x}}:\left[  0,1\right]  \rightarrow
\overline{\mathbf{xq}}_{\mathbf{x}}$ of these segments we obtain a
track-faithful deformation retraction of $\square^{n}$ onto $J^{n-1}$. We will
refer to culmination as an \emph{elementary cubical collapse}.
\end{example}

\begin{example}
\label{Example: contractions that are track-faithful}A contraction of a space
$C$ to a point $p$ is track-faithful if and only if $p$ is fixed by the contraction.
\end{example}

\begin{example}
The retractions $r,r^{\prime}:\left[  -1,1\right]  \rightarrow\left[
0,1\right]  $ defined by $r\left(  t\right)  =\left\vert t\right\vert $ and
\[
r^{\prime}\left(  t\right)  =\left\{
\begin{tabular}
[c]{cc}%
$0$ & if $t\leq0$\\
$t$ & if $t\geq0$%
\end{tabular}
\ \ \ \ \ \right.
\]
can each be realized as the culmination of a deformation retraction. But,
whereas $r^{\prime}$ is the culmination of a track-faithful deformation
retraction, $r$ is not.
\end{example}

\begin{lemma}
Let $H_{t}$ be a track-faithful deformation retraction of $C$ onto $A$
culminating in $r:C\rightarrow A$. Then

\begin{enumerate}
\item for each $x\in A$, $r^{-1}\left(  x\right)  $ is contractible, and

\item if $x\in\operatorname*{Int}A$, then $r^{-1}\left(  x\right)  =\left\{
x\right\}  $.
\end{enumerate}
\end{lemma}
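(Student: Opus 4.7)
The plan is to prove both parts by carefully exploiting track-faithfulness together with the fact that $H_t|_A = \mathrm{id}_A$.

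For part (1), the key observation is that track-faithfulness forces the tracks of points in $r^{-1}(x)$ to remain inside $r^{-1}(x)$. First I would check that for any $y \in r^{-1}(x)$ and $t \in [0,1]$, $r(H_t(y)) = H_1(H_t(y)) = H_1(y) = x$, so $H_t(y) \in r^{-1}(x)$. Hence the restriction of $H$ to $r^{-1}(x) \times [0,1]$ takes values in $r^{-1}(x)$, begins at $\mathrm{id}_{r^{-1}(x)}$, and ends at the constant map sending everything to $x$. This exhibits $r^{-1}(x)$ as contractible.

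For part (2), suppose $x \in \operatorname{Int} A$ and $y \in r^{-1}(x)$; the goal is to show $y = x$. I would examine the set
\[
S \;=\; \lambda_y^{-1}(\{x\}) \;=\; \{\, t \in [0,1] : H_t(y) = x \,\}
\]
and argue it is clopen in $[0,1]$. Closedness is immediate from continuity of $\lambda_y$ (assuming the usual Hausdorff convention). The substantive step is openness: for any $t_0 \in S$ we have $\lambda_y(t_0) = x \in \operatorname{Int} A$, so by continuity there is an open neighborhood $V$ of $t_0$ in $[0,1]$ with $\lambda_y(V) \subseteq \operatorname{Int} A \subseteq A$. For $t \in V$, track-faithfulness gives $H_1(\lambda_y(t)) = H_1(H_t(y)) = H_1(y) = x$, while $\lambda_y(t) \in A$ gives $H_1(\lambda_y(t)) = \lambda_y(t)$. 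Combining, $\lambda_y(t) = x$, so $V \subseteq S$.

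Finally, since $S$ is a nonempty clopen subset of the connected space $[0,1]$ (it contains $1$), we conclude $S = [0,1]$, and in particular $0 \in S$, so $y = H_0(y) = \lambda_y(0) = x$. The main conceptual step I expect to do the work is the openness argument for $S$: it is precisely the interplay between track-faithfulness (which forces $\lambda_y(t)$ to eventually reach $x$ at time $1$) and the fact that $H_1$ fixes $A$ pointwise (which then forces $\lambda_y(t)$ to already \emph{be} $x$) that makes the interior hypothesis bite. The rest of the proof is formal.
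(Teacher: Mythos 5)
Your proof is correct, and part (1) is identical to the paper's argument: track-faithfulness gives $H_1(H_t(y))=H_1(y)=x$, so the tracks of points in $r^{-1}(x)$ stay in $r^{-1}(x)$ and the restricted homotopy is a contraction. For part (2) you use exactly the same two ingredients as the paper---the identity $H_1\circ H_t=H_1$ and the fact that $H_1$ fixes $A$ pointwise, with the interior hypothesis supplying continuity---but you package them differently. The paper argues by contradiction from a single witness time: it asserts that if $y\neq x$ there is a $t_0$ with $\lambda_y(t_0)\in A$ and $\lambda_y(t_0)\neq x$, and then derives $r(y_0)=x$ versus $r(y_0)=y_0$ for $y_0=\lambda_y(t_0)$. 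You instead show that $S=\{t:\lambda_y(t)=x\}$ is a nonempty clopen subset of $[0,1]$ and conclude $0\in S$. Your version is arguably the more complete one: the existence of the paper's witness $t_0$ is itself a small continuity-plus-connectedness fact (one must rule out that the track equals $x$ whenever it lies in $A$), and your openness argument is precisely the step that makes this explicit. The trade-off is minor---the paper's proof is shorter once that existence claim is granted, while yours is self-contained and needs no case analysis.
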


\begin{proof}
For the first assertion, note that if $y\in r^{-1}\left(  x\right)  $, then
$\lambda_{y}\subseteq r^{-1}\left(  x\right)  $. Therefore, the restriction of
$H$ to $r^{-1}\left(  x\right)  \times\left[  0,1\right]  $ is a contraction
of $r^{-1}\left(  x\right)  $. For the second assertion, let $x\in
\operatorname*{Int}A$ and suppose there exists $y\in r^{-1}\left(  x\right)  $
such that $y\neq x$. Then there exists $t_{0}\in\left[  0,1\right]  $ such
that $\lambda_{x}\left(  t_{0}\right)  \in A$ and $\lambda_{x}\left(
t_{0}\right)  \neq x$. Let $y_{0}=\lambda_{x}\left(  t_{0}\right)  $ and note
that $\lambda_{y_{0}}=\left\{  y_{0}\right\}  $. Our hypothesis ensures that
$r\left(  y_{0}\right)  =H_{1}\left(  y_{0}\right)  =H_{1}\left(  H_{t_{0}%
}\left(  y\right)  \right)  =x$, but since $y_{0}\in A$, $r\left(
y_{0}\right)  =y_{0}$.
\end{proof}

\begin{lemma}
If $H:C\times\left[  0,1\right]  \rightarrow C$ is a track faithful
deformation retraction of $C$ onto $A$ and $J:A\times\left[  0,1\right]
\rightarrow A$ is a track faithful deformation retraction of $A$ onto $B$,
then the concatenation of $H$ and $J$ is a track faithful deformation
retraction of $C$ onto $B$ with $\left(  H\ast J\right)  _{1}=J_{1}\circ
H_{1}$.
\end{lemma}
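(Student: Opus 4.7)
The plan is to define the concatenation $H\ast J\colon C\times[0,1]\to C$ explicitly by
\[
(H\ast J)(x,t)=\begin{cases} H(x,2t) & t\in[0,1/2],\\ J(H_{1}(x),2t-1) & t\in[1/2,1],\end{cases}
\]
which is well defined since $H_{1}(C)=A$, the domain of $J$. I will then verify, in order: continuity, the endpoint conditions, that $B$ is fixed throughout, and finally track-faithfulness.

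Continuity will follow from the pasting lemma once I check the two formulas agree at $t=1/2$: both give $H_{1}(x)\in A$, using that $J_{0}=\operatorname{id}_{A}$. The condition $(H\ast J)_{0}=\operatorname{id}_{C}$ is immediate, and the identity $(H\ast J)_{1}=J_{1}\circ H_{1}$ can be read off directly from the second piece at $t=1$; its image is $J_{1}(H_{1}(C))=J_{1}(A)=B$. For $b\in B$ the first half gives $H(b,2t)=b$ since $b\in A$ and $H$ fixes $A$, while the second half gives $J(H_{1}(b),2t-1)=J(b,2t-1)=b$ since $b\in B$ and $J$ fixes $B$.

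The main work is verifying track-faithfulness, namely that $(H\ast J)_{1}((H\ast J)_{t}(x))=(H\ast J)_{1}(x)$ for every $x\in C$ and $t\in[0,1]$. I would split into cases on whether $t\le 1/2$ or $t\ge 1/2$. For $t\le 1/2$, $(H\ast J)_{t}(x)=H_{2t}(x)$, so applying $(H\ast J)_{1}=J_{1}\circ H_{1}$ produces $J_{1}(H_{1}(H_{2t}(x)))$, which collapses to $J_{1}(H_{1}(x))$ by track-faithfulness of $H$. For $t\ge 1/2$, $(H\ast J)_{t}(x)=J_{2t-1}(H_{1}(x))$ lies in $A$, so $H_{1}$ acts as the identity on it, leaving $J_{1}(J_{2t-1}(H_{1}(x)))$, which reduces to $J_{1}(H_{1}(x))$ by track-faithfulness of $J$.

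The only subtle point, insofar as there is one, is remembering in the $t\ge 1/2$ case that the intermediate point $J_{2t-1}(H_{1}(x))$ lies in $A$ so that $H_{1}$ fixes it; after that, the track-faithfulness hypotheses on $H$ and $J$ each get invoked exactly once. There are no analytic subtleties---the argument is a reparametrized application of the pasting lemma together with two direct applications of the definition of track-faithfulness.
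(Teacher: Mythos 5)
Your proof is correct and follows the same route as the paper, which simply writes down the concatenation formula and declares the verification straightforward; you have supplied exactly the details the paper omits, and your two case checks of track-faithfulness (using $H_{1}\circ H_{t}=H_{1}$ on the first half and the fact that $J_{2t-1}(H_{1}(x))\in A$ is fixed by $H_{1}$ on the second half) are the right ones. Note that your second-piece formula $J(H_{1}(x),2t-1)$ is actually the correct reading of the paper's $J(x,2t-1)$, which as literally written is only defined for $x\in A$.
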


\begin{proof}
By \textquotedblleft concatenation of $H$ and $J$\textquotedblright, we mean
the homotopy $H\ast J:C\times\left[  0,1\right]  \rightarrow C$ defined by
\[
(H\ast J)\left(  x,t\right)  =\left\{
\begin{tabular}
[c]{cc}%
$H\left(  x,2t\right)  $ & if $0\leq t\leq\frac{1}{2}$\\
$J(x,2t-1)$ & if $\frac{1}{2}\leq t\leq1$%
\end{tabular}
\ \right.
\]
Verification of the desired properties is straightforward.
\end{proof}

\begin{remark}
There are other reasonable candidates for the definition of track-faithful
homotopy. For example, one could require that the track of every point lying
on $\lambda_{x}$ be contained in $\lambda_{x}$. We have chosen the weakest
condition that is required in our proofs.
\end{remark}

\subsection{Compact expansions and collapses}

Given a metric space $C$, a map $\rho:C\rightarrow A$ is an \emph{compact
collapse} if

\begin{itemize}
\item[\textbf{a)}] \label{Definition: elementary topological collapse a)}%
$\rho$ is the culmination of track-faithful deformation retraction, and

\item[\textbf{b)}] \label{Definition: elementary topological collapse b)}$C-A$
has compact closure in $C$.
\end{itemize}

\noindent In this case, we write $C\overset{\rho}{\searrow}A$ or
$A\overset{\rho}{\swarrow}C$; we call the inclusion map $\theta
:A\hookrightarrow C$ an \emph{compact expansion} and write $A\overset{\theta
}{\nearrow}C$ or $C\overset{\theta}{\nwarrow}A$.

\begin{notation}
As an aid to the reader, we will use the letter \textquotedblleft
rho\textquotedblright\ \emph{only} to denote compact collapses and
\textquotedblleft theta\textquotedblright\ only to denote compact expansions.
\end{notation}

\begin{example}
\label{Example: examples of compact collapses}Examples
\ref{Example: product collapse}, \ref{Example: mapping cylinder collapse} and
\ref{Example: contractions that are track-faithful} give compact collapses
when the corresponding spaces are compact. The elementary simplicial and
cubical collapses in Example \ref{Example: elementary simplicial collapses}
and \ref{Example: elementary cubical collapse} are also compact collapses.
\end{example}

\begin{lemma}
\label{Lemma: collapse is a topological property}If $C\overset{\rho}{\searrow
}A$ and the pair $\left(  C,A\right)  $ is homeomorphic to $\left(  C^{\prime
},A^{\prime}\right)  $, then $C^{\prime}\overset{\rho^{\prime}}{\searrow
}A^{\prime}$ for an appropriately chosen retraction $\rho^{\prime}$.
\end{lemma}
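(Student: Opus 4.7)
The plan is to construct $\rho'$ by transporting $\rho$ across the homeomorphism, i.e.\ by conjugating the defining deformation retraction. Let $h\colon C\to C'$ be the given homeomorphism, which by hypothesis restricts to a homeomorphism $A\to A'$, and let $H\colon C\times[0,1]\to C$ be a track-faithful deformation retraction of $C$ onto $A$ with $H_1=\rho$. Define
\[
H'\colon C'\times[0,1]\to C',\qquad H'(y,t)=h\bigl(H(h^{-1}(y),t)\bigr),
\]
and let $\rho':=H'_1$. The inclusion $\theta'\colon A'\hookrightarrow C'$ will then be the desired compact expansion.

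Next I would check, in turn, each clause of the definition of a compact collapse for $H'$. Continuity is automatic since $H'$ is a composition of continuous maps. One computes $H'_0=h\circ\operatorname{id}_C\circ h^{-1}=\operatorname{id}_{C'}$, and $H'_1(C')=h(H_1(C))=h(A)=A'$. For the fixing condition, if $y\in A'$ then $h^{-1}(y)\in A$, so $H_t(h^{-1}(y))=h^{-1}(y)$ for every $t$, whence $H'_t(y)=y$. Thus $H'$ is a deformation retraction of $C'$ onto $A'$ culminating in $\rho'$.

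For track-faithfulness, fix $y\in C'$ and $t\in[0,1]$, and let $x=h^{-1}(y)$. Then
\[
H'_1\bigl(H'_t(y)\bigr)=h\Bigl(H_1\bigl(H_t(x)\bigr)\Bigr)=h\bigl(H_1(x)\bigr)=H'_1(y),
\]
the middle equality being the track-faithfulness of $H$. Hence $H'$ is track-faithful. Finally, since $h$ is a homeomorphism of pairs, $C'-A'=h(C-A)$ and $\overline{C'-A'}=h\bigl(\overline{C-A}\bigr)$; as the continuous image of a compact set, this closure is compact. Therefore $\rho'$ satisfies both defining conditions and we may write $C'\overset{\rho'}{\searrow}A'$.

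There is no real obstacle here: the statement is essentially the naturality of the definition under homeomorphisms, and every condition in the definition of a compact collapse is preserved under conjugation by $h$. The only modest point worth flagging is the closure equality $\overline{h(C-A)}=h(\overline{C-A})$, which requires that $h$ be a homeomorphism (not merely continuous) so that both $h$ and $h^{-1}$ preserve limits of nets, but this is given.
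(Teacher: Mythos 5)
Your proof is correct: conjugating the track-faithful deformation retraction by the homeomorphism of pairs and checking each clause of the definition (including the compactness of $\overline{C'-A'}=h(\overline{C-A})$) is exactly the routine argument the paper has in mind, which is presumably why the lemma is stated there without proof. Nothing is missing.
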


\begin{corollary}
\label{Corollary: ball collapse}Let $B^{n}$ be a topological $n$-ball and
$B^{n-1}$ be an $\left(  n-1\right)  $ ball tamely embedded in $\partial
B^{n}$. Then $B^{n}\overset{\rho}{\searrow}B^{n-1}$.
\end{corollary}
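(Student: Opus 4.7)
The plan is to exhibit an explicit compact collapse on a concrete standard model, and then transfer the collapse to the given pair via the topological invariance asserted in Lemma~\ref{Lemma: collapse is a topological property}. The tameness of the embedding of $B^{n-1}$ in $\partial B^n$ is precisely what provides the homeomorphism of pairs needed for this transfer.

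First, I would take as the standard model the slab $S = [-1,1]^{n-1}\times[0,1]$, together with its bottom face $F = [-1,1]^{n-1}\times\{0\}$, and verify directly that $S \searrow F$ is a compact collapse. Define $H \colon S\times[0,1]\to S$ by
\[
H_t(x,s) = (x,(1-t)s).
\]
Then $H_0 = \operatorname{id}_S$, $H_t|_F = \operatorname{id}_F$ for all $t$, and $H_1(S)=F$, so $H$ is a deformation retraction of $S$ onto $F$. A direct check shows
\[
H_1\bigl(H_t(x,s)\bigr) = H_1(x,(1-t)s) = (x,0) = H_1(x,s),
\]
so $H$ is track-faithful. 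Finally, the closure of $S-F$ in $S$ is all of $S$, which is compact. Hence the culmination of $H$ is a compact collapse $\rho_0 \colon S \to F$, and $S\overset{\rho_0}{\searrow} F$.

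Second, I would invoke the hypothesis of tameness. Tame embedding of an $(n-1)$-ball in the boundary of an $n$-ball (in whichever category the authors have in mind—PL tameness via a regular neighborhood, or topological local flatness together with a boundary collar) is exactly the condition guaranteeing a homeomorphism of pairs $(B^n,B^{n-1}) \cong (S,F)$. The idea is that a collar of $B^{n-1}$ inside $B^n$ obtained from the tameness allows one to flatten a neighborhood of $B^{n-1}$ onto a slab, which then extends by standard arguments (conewise over the rest of $\partial B^n$) to a homeomorphism of the whole pair onto $(S,F)$.

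Finally, applying Lemma~\ref{Lemma: collapse is a topological property} to this pair homeomorphism converts the collapse $S\overset{\rho_0}{\searrow} F$ into a compact collapse $B^n\overset{\rho}{\searrow} B^{n-1}$, as required. The only step demanding care is the tameness argument producing the pair homeomorphism; the construction of the collapse itself on the standard slab is entirely routine.
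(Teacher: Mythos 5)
Your proposal is correct and follows essentially the same strategy as the paper: exhibit an explicit track-faithful collapse on a standard model pair and transfer it to $(B^{n},B^{n-1})$ via Lemma~\ref{Lemma: collapse is a topological property}, with tameness supplying the homeomorphism of pairs. The only difference is cosmetic --- the paper uses the model $(\square^{n},J^{n-1})$ with the radial collapse of Example~\ref{Example: elementary cubical collapse}, while you use the slab $([-1,1]^{n-1}\times[0,1],\,[-1,1]^{n-1}\times\{0\})$ with the product collapse of Example~\ref{Example: product collapse}; these model pairs are themselves homeomorphic, so the arguments coincide.
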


\begin{proof}
By standard methods in geometric topology $\left(  B^{n},B^{n-1}\right)
\approx\left(  \square^{n},J^{n-1}\right)  $. So the claim follows from Lemma
\ref{Lemma: collapse is a topological property} and Example
\ref{Example: elementary cubical collapse}.
\end{proof}

\begin{lemma}
\label{Lemma: extending a topological collapse}Suppose $X$ can be expressed as
a union $X=C\cup Y$ of closed subspaces; $A=C\cap Y$; and $C\overset{\rho
}{\searrow}A$. Then $X\overset{\rho}{\searrow}Y$. More generally, if
$C\overset{\rho}{\searrow}A$ and $f:A\rightarrow Y$ is continuous, then
$Y\cup_{f}C\overset{\rho}{\searrow}Y$.
\end{lemma}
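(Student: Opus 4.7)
The plan for the first assertion is to extend the track-faithful deformation retraction $H:C\times[0,1]\to C$ (with $H_{1}=\rho$) to a track-faithful deformation retraction $\widetilde H:X\times[0,1]\to X$ by declaring $\widetilde H$ to agree with $H$ on $C\times[0,1]$ and to be the identity on $Y\times[0,1]$. These prescriptions agree on the overlap $A\times[0,1]$ because $H$ fixes $A$ throughout, so continuity of $\widetilde H$ follows from the pasting lemma (using that both $C$ and $Y$ are closed in $X$). It is immediate that $\widetilde H_{0}=\operatorname{id}_{X}$, that $\widetilde H_{t}|_{Y}=\operatorname{id}_{Y}$ for all $t$, and that $\widetilde H_{1}(X)=H_{1}(C)\cup Y=A\cup Y=Y$, so $\widetilde H$ is a deformation retraction of $X$ onto $Y$. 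Track-faithfulness is checked case-by-case: for $x\in Y$ the track is constant, while for $x\in C$ the track lies in $C$ and the identity $\widetilde H_{1}(\widetilde H_{t}(x))=\widetilde H_{1}(x)$ is inherited directly from track-faithfulness of $H$.

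The compactness condition is equally clean: $X-Y=(C\cup Y)-Y=C-A$, and since $C$ is closed in $X$ the closure of $C-A$ in $X$ coincides with its closure in $C$, which is compact by hypothesis. Therefore $\widetilde\rho:=\widetilde H_{1}:X\to Y$ is a compact collapse.

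For the generalized statement, I would reduce to the first part. Let $q:Y\sqcup C\to Y\cup_{f}C$ be the quotient map, write $\overline C:=q(C)$, and identify $Y$ with its image in $Y\cup_{f}C$; both are closed subspaces of the adjunction space, with $\overline C\cap Y=f(A)$. Since $H$ fixes $A$ pointwise for every $t$, it respects the identifications made in forming $Y\cup_{f}C$ and therefore descends to a homotopy $\overline H:\overline C\times[0,1]\to\overline C$; continuity of $\overline H$ follows from the standard fact that $q\times\operatorname{id}_{[0,1]}$ is a quotient map because $[0,1]$ is locally compact Hausdorff. The calculation $\overline H_{1}(\overline H_{t}(x))=q(H_{1}(H_{t}(c)))=q(H_{1}(c))=\overline H_{1}(x)$ (for any lift $c$ of $x$) gives track-faithfulness, and since $q$ restricted to $C-A$ is injective, $\overline C-f(A)$ is homeomorphic to $C-A$, whose closure in $\overline C$ is the continuous image of $\overline{C-A}\subseteq C$ and hence compact. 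Thus $\overline C\overset{\bar\rho}{\searrow}f(A)$, and applying the first part of the lemma to the decomposition $Y\cup_{f}C=\overline C\cup Y$ yields the desired compact collapse onto $Y$.

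The only real obstacle is the verification that the descended homotopy $\overline H$ inherits continuity and that the compact closure condition survives the quotient; both reduce to standard, but easily overlooked, facts about quotient maps and closed subspaces of adjunction spaces. Everything else (deformation-retraction axioms, track-faithfulness, and the identification $X-Y=C-A$ in the first part) is a routine unwinding of definitions.
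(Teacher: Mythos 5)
Your proof is correct and follows essentially the same route as the paper: the first assertion via the pasting lemma (extending $H$ by the identity over $Y$, which works because $H$ fixes $A$ throughout), and the second by descending the track-faithful homotopy to the quotient $q(C)$ and then reducing to the first part. Your version simply spells out more of the point-set details (the quotient-map argument using local compactness of $[0,1]$ and the verification of the compact-closure condition) that the paper leaves implicit.
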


\begin{proof}
The deformation retraction needed for the initial statement follows from the
Pasting Lemma. For the second statement, note that the track-faithful
deformation retraction of $C$ onto $A$ induces a track-faithful deformation
retraction of the induced quotient space $C/\sim$ onto $f\left(  A\right)
=A/\sim$. Since points in $f\left(  A\right)  $ have constant tracks, this
deformation retraction extends via the identity homotopy over $Y$.
\end{proof}

\begin{corollary}
\label{Corollary: CW collapse}Every elementary CW collapse (in the sense of
\cite[p.14]{Coh73}) of a CW complex $K$ to a subcomplex $L$ can be realized as
a compact collapse $K\overset{\rho}{\searrow}L$.
\end{corollary}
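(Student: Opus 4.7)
The plan is to reduce an elementary CW collapse to an instance of the cubical compact collapse of Example \ref{Example: elementary cubical collapse}, pushed out along its attaching map using Lemma \ref{Lemma: extending a topological collapse}. In Cohen's definition, an elementary CW collapse from $K$ onto $L = K - (e^{n-1} \cup e^n)$ is witnessed by a ball pair $(Q^n, Q^{n-1})$ and a characteristic map $\varphi : Q^n \to \bar e^n$ such that $\varphi|_{Q^{n-1}}$ is a characteristic map for $\bar e^{n-1}$ and $\varphi(P^{n-1}) \subseteq L$, where $P^{n-1} := \overline{\partial Q^n - Q^{n-1}}$. My first step is to identify the ball triple $(Q^n, Q^{n-1}, P^{n-1})$ with the cubical model $(\square^n, B^{n-1}, J^{n-1})$; under this identification, Example \ref{Example: elementary cubical collapse} supplies a compact collapse $\rho_0 : \square^n \searrow J^{n-1}$ realized as the culmination of a track-faithful deformation retraction.

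Next, I would verify that $\varphi$ presents $K$ as the adjunction space $L \cup_\psi \square^n$, where $\psi := \varphi|_{J^{n-1}} : J^{n-1} \to L$. This holds because $e^{n-1}$ and $e^n$ are both generated by $\varphi$ (namely $e^{n-1} = \varphi(\operatorname{int} B^{n-1})$ and $e^n = \varphi(\operatorname{int} \square^n)$), and the CW topology on $K$ agrees with the quotient topology on $L \sqcup \square^n / \sim$: subsets are closed in $K$ iff their intersection with each closed cell is closed, which matches the pushout closed sets. With this identification in hand, I would invoke Lemma \ref{Lemma: extending a topological collapse} with $C = \square^n$, $A = J^{n-1}$, $Y = L$, and $f = \psi$ to extend $\rho_0$ to a compact collapse $\rho : K \searrow L$.

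Condition (b) in the definition of a compact collapse is automatic, since $K - L \subseteq \bar e^n = \varphi(\square^n)$ is compact; condition (a) is inherited from $\rho_0$ via the pushout construction, which preserves track-faithfulness as shown in the proof of Lemma \ref{Lemma: extending a topological collapse}. The main obstacle is therefore mostly bookkeeping: checking that the adjunction description $K \cong L \cup_\psi \square^n$ really is a homeomorphism (not just a continuous bijection), which reduces to the standard fact that the CW topology is coherent with its closed cells. I do not expect any serious technical difficulties beyond this.
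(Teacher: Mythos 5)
Your proposal is correct and follows essentially the same route as the paper: the paper's proof is the one-line instruction to combine Corollary \ref{Corollary: ball collapse} (which is itself proved by identifying the ball pair with the cubical model of Example \ref{Example: elementary cubical collapse}) with the adjunction-space part of Lemma \ref{Lemma: extending a topological collapse}. You have simply unwound Corollary \ref{Corollary: ball collapse} and supplied the bookkeeping (the identification $K\cong L\cup_{\psi}\square^{n}$ and the compactness of $\overline{K-L}$) that the paper leaves implicit.
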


\begin{proof}
Apply Corollary \ref{Corollary: ball collapse} and Lemma
\ref{Lemma: extending a topological collapse} to the definition.
\end{proof}

The reader may have wondered why we did not include the word \textquotedblleft
elementary\textquotedblright\ in our definition of compact collapse. The
reason is that, unlike in the more prescriptive simplicial and CW versions,
any finite sequence of compact collapses can be combined into a single compact collapse.

\begin{lemma}
\label{Lemma: combining finitely many compact collapses}Suppose
\[
X_{n}\overset{\rho_{n}}{\searrow}X_{n-1}\overset{\rho_{n-1}}{\searrow}%
\cdots\overset{\rho_{1`}}{\searrow}X_{0}%
\]
is a finite sequence of compact collapses. Then $X_{n}\overset{\rho}{\searrow
}X_{0}$, where $\rho=\rho_{1}\circ\cdots\circ\rho_{n}$.
\end{lemma}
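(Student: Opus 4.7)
The plan is induction on $n$, with the real work done in the $n=2$ case, after which iteration takes care of the rest. The two requirements to verify for the composite $\rho = \rho_1 \circ \cdots \circ \rho_n$ are (a) that $\rho$ arises as the culmination of a track-faithful deformation retraction of $X_n$ onto $X_0$, and (b) that $X_n - X_0$ has compact closure in $X_n$.

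For the inductive step, I would suppose that $\rho' := \rho_1 \circ \cdots \circ \rho_{n-1}: X_{n-1} \to X_0$ is the culmination of a track-faithful deformation retraction $H': X_{n-1} \times [0,1] \to X_{n-1}$. Together with the track-faithful deformation retraction $H: X_n \times [0,1] \to X_n$ culminating in $\rho_n$, an application of the concatenation lemma proved earlier in this section gives a track-faithful deformation retraction $H \ast H': X_n \times [0,1] \to X_n$ whose culmination is $H'_1 \circ H_1 = \rho' \circ \rho_n = \rho_1 \circ \cdots \circ \rho_n$, as desired.

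For the compactness condition, I would observe that each $X_{i-1}$ is the image of the retraction $\rho_i: X_i \to X_{i-1}$ and hence is closed in $X_i$ (since $X_i$ is metric, therefore Hausdorff); by iteration, each $X_i$ is closed in $X_n$. Writing
\[
X_n - X_0 = \bigcup_{i=1}^{n} (X_i - X_{i-1}),
\]
the closure in $X_n$ of each piece $X_i - X_{i-1}$ coincides with its closure in $X_i$ (because $X_i$ is closed in $X_n$), and that closure is compact by hypothesis. Thus the closure of $X_n - X_0$ in $X_n$ is a finite union of compacta and therefore compact.

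The only mildly delicate point is ensuring that the concatenation in the inductive step really does produce a deformation retraction defined on all of $X_n$ (not merely on $X_{n-1}$, where $H'$ initially lives); this is resolved by interpreting the second half of the concatenation as $H'(\rho_n(x), 2t-1)$ for $t \in [\tfrac{1}{2},1]$, which is continuous on $X_n$, agrees with $H(x, 2t)$ at $t = \tfrac{1}{2}$, and inherits track-faithfulness from the earlier lemma. Beyond this bookkeeping, there is no real obstacle: the lemma is essentially a formal consequence of the concatenation result together with the closedness of retracts in Hausdorff spaces.
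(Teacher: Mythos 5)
Your proof is correct and follows what is evidently the intended argument: the paper states this lemma without proof, immediately after the concatenation lemma, and your induction simply iterates that lemma while also supplying the (easy but necessary) check that $\overline{X_{n}-X_{0}}$ is compact via the telescoping decomposition $X_{n}-X_{0}=\cup_{i=1}^{n}\left(  X_{i}-X_{i-1}\right)  $ and the closedness of retracts in Hausdorff spaces. Your parenthetical correction of the concatenation formula---reading the second half as $H^{\prime}\left(  \rho_{n}\left(  x\right)  ,2t-1\right)  $ rather than $H^{\prime}\left(  x,2t-1\right)  $, which is not defined for $x\notin X_{n-1}$---is also right and fixes a genuine typo in the earlier lemma.
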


\begin{remark}
\label{Remark: on contractible spaces}Note that every base point preserving
contraction of a compactum $X$ is a compact collapse. If we were to allow
\textquotedblleft noncompact collapses\textquotedblright, then all reasonably
nice contractible spaces would be collapsible in a single move. That extreme
level of flexibility is unlikely to yield a useful theory. By contrast, we
will show that a theory built upon compact expansions and collapses can be
quite useful.
\end{remark}

\subsection{Infinite compact expansions and
collapses\label{Subsection: Infinite compact expansions and collapses}}

We are ready for our most general treatment of expansions and collapses---one
based on compact expansions and collapses and allowing infinitely many moves.

We say that a metric space $X$ \emph{collapses compactly }to a subspace $L$,
or equivalently, that $L$ \emph{expands compactly to }$X$, if there exists a
(possibly infinite) sequence of compact expansions $L=L_{0}%
\overset{e}{\swarrow}L_{1}\overset{e}{\swarrow}L_{2}\overset{e}{\swarrow
}\cdots$ through subsets of $X$, such that $\cup_{i=0}^{\infty}L_{i}=X$. In
this case, we write $X\searrow L$ or $L\nearrow X$. For extra clarity we
sometimes write $X\overset{\infty}{\searrow}L$ or $L\overset{\infty}{\nearrow
}X$. We say that $X$ is \emph{compactly collapsible} if $\left\{  p\right\}
\nearrow X$, for some $p\in X$. Note that, although we can combine finite
pieces of an infinite sequence of compact expansions into single compact
expansion, an infinite sequence will remain infinite. 

\begin{remark}
It is worth commenting further on the work in \cite{Fer80} mentioned in the
introduction. Ferry's motivation for studying compact collapses (which he
called \emph{contractible retractions}) was an attempt to develop a
\textquotedblleft simple homotopy theory for compact metric
spaces\textquotedblright. In analogy with the traditional theory, a pair of
compact metric spaces would be \emph{simple homotopy equivalent}\ if one could
be converted to the other by a finite sequence of compact expansions and
collapses. That idea was undone by the strength of Ferry's own work. He proved
that two compacta satisfy that version of simple homotopy equivalent, if and
only if they are homotopy equivalent---a nice theorem, but one which nullified
the utility of a new definition. Our work resurrects a portion that program,
but with different goals. By utilizing \emph{infinite} sequences of compact
expansions and collapses, we will obtain some new results. We are particularly
interested in $\mathcal{Z}$-compactifications. For that purposes, Ferry's
definition is a perfect fit.
\end{remark}

\section{Compactifications\label{Section: Compactifications}}

A primary goal of this paper is to develop general methods for obtaining nice
compactifications. We are especially interested in $\mathcal{Z}$%
-compactifications of spaces arising in geometric group theory and geometric
and algebraic topology; but much the machinery can be applied more widely to
obtain various interesting compactifications. To allow for the broadest set of
applications, we begin with a flexible set of definitions. The reader is
warned that the terminology surrounding this topic---including key
definitions, such as $\mathcal{Z}$-set---is not consistent across the
literature. Rather than trying to blend with everything written previously, we
will choose definitions that works for our purposes and causes as little
confusion as possible. In many cases, conflicts in terminology vanish in the
presence of certain \textquotedblleft niceness\textquotedblright\ assumptions
on the spaces to which it applies. For a thorough treatment of the various
definitions, with proofs of equivalences under appropriate conditions, the
reader is directed to \cite{Anc23}.

For this paper, there is no loss in restricting the following definitions to
cases where the ambient space $Y$ is compact and metrizable. In that case, we
can let $d$ be any metric inducing the correct topology on $Y$. For open cover
$\mathcal{U}$ of $Y$, maps $f,g:Y\rightarrow Y$ are $\mathcal{U}$\emph{-close
}if, for every $x\in Y$ there exists a $U\in\mathcal{U}$ such that $f\left(
x\right)  $ and $g\left(  x\right)  $ are contained in $U$. These maps are
called $\mathcal{U}$\emph{-homotopic} if there exists a homotopy $H_{t}$
between $f$ and $g$, each of whose tracks is contained in an element of
$\mathcal{U}$. In that case, $H_{t}$ is called a $\mathcal{U}$\emph{-homotopy.
}Clearly $\mathcal{U}$-homotopic maps are $\mathcal{U}$-close.

A subset $A$ of $Y$ is called \emph{mapping negligible} if, for every open
cover $\mathcal{U}$ of $Y$, there is a map $f:Y\rightarrow Y$ that is
$\mathcal{U}$-close to $\operatorname*{id}_{Y}$ and for which $f\left(
Y\right)  \subseteq Y-A$. We say that $A$ is \emph{homotopy negligible} if
there exists a homotopy $H:Y\times\left[  0,1\right]  \rightarrow Y$ such that
$H_{0}=\operatorname*{id}_{Y}$ and $H_{t}\left(  Y\right)  \subseteq Y-A$ for
all $t>0$. For an arbitrary open cover $\mathcal{U}$ of $Y$, one can
arrange---by restricting $H$ to a short interval $\left[  0,\delta\right]  $,
then reparametrizing---that this is accomplished by a $\mathcal{U}$-homotopy.
If $A$ is closed and mapping negligible, we call it an $\Omega$\emph{-set}; if
it is a closed homotopy negligible, we call it a $\mathcal{Z}$\emph{-set}. One
of the beauties of homotopy negligible sets (and thus $\mathcal{Z}$-sets) is
captured by the following simple observation.

\begin{proposition}
\label{Proposition: homotopy negligible sets}If $A$ is a homotopy negligible
in $Y$, then $s:Y-A\hookrightarrow Y$ is a homotopy equivalence.
\end{proposition}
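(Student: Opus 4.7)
The plan is to build an explicit homotopy inverse to $s$ directly from the homotopy $H$ witnessing the homotopy negligibility of $A$. Since $H_{t}(Y)\subseteq Y-A$ for every $t>0$, the map $H_{1}:Y\to Y$ factors through the inclusion $s$, and its corestriction $r:Y\to Y-A$, defined by $r(y)=H_{1}(y)$, is continuous because $Y-A$ carries the subspace topology.

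Next I would check the two compositions. On the one hand, $s\circ r:Y\to Y$ is literally the map $H_{1}$, so the given homotopy $H$ itself is a homotopy in $Y$ from $\operatorname{id}_{Y}=H_{0}$ to $s\circ r=H_{1}$. On the other hand, $r\circ s:Y-A\to Y-A$ sends $y\mapsto H_{1}(y)$, and I would verify that the restriction $H|_{(Y-A)\times[0,1]}$ actually takes values in $Y-A$: for $t>0$ this is immediate from $H_{t}(Y)\subseteq Y-A$, and for $t=0$ we have $H_{0}(y)=y\in Y-A$ since $y\in Y-A$. Thus the restriction is a well-defined, continuous homotopy in $Y-A$ from $\operatorname{id}_{Y-A}$ to $r\circ s$.

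Combining these two facts gives that $s$ is a homotopy equivalence with homotopy inverse $r$. There is no serious obstacle here; the only point requiring a moment of care is that the restricted homotopy stays inside $Y-A$ at the single time $t=0$, which is handled by the identity condition $H_{0}=\operatorname{id}_{Y}$. Note also that this proof works verbatim without the assumption that $Y$ is compact metrizable, so the proposition really only uses the bare definition of homotopy negligibility.
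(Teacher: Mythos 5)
Your proof is correct and is essentially identical to the paper's: both define the homotopy inverse as the corestriction of $H_{1}$ to $Y-A$ and use $H$ itself (and its restriction to $(Y-A)\times[0,1]$) to supply the two required homotopies. Your extra check that the restricted homotopy stays in $Y-A$ at $t=0$ is a fine small addition, but the argument is the same.
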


\begin{proof}
Let $H:Y\times\left[  0,1\right]  \rightarrow Y$ be a homotopy with
$H_{0}=\operatorname*{id}_{Y}$ and $H_{t}\left(  Y\right)  \subseteq Y-A$ for
all $t>0$. Define $f:Y\rightarrow Y-A$ by $f\left(  x\right)  =H_{1}\left(
x\right)  $. Then $s\circ f=H_{1}$ which is homotopic to $\operatorname*{id}%
_{Y}$ by assumption. Similarly, $f\circ s=\left.  H_{1}\right\vert _{Y-A}$ can
be homotoped to $\operatorname*{id}_{Y-A}$ by applying $\left.  H\right\vert
_{\left(  Y-A\right)  \times\left[  0,1\right]  }$.
\end{proof}

\begin{example}
The homotopy negligible subsets of a compact manifold with boundary $M^{n}$
are precisely the subsets of $\partial M^{n}$; the $\mathcal{Z}$-sets are the
closed subsets of $\partial M^{n}$.
\end{example}

\begin{example}
For the space $\overline{X}=X\sqcup R$, described in Example
\ref{Example: cone on a sequence 1}, $R$ is mapping negligible but not an
$\Omega$-set. It is not homotopy negligible even though the conclusion of
Proposition \ref{Proposition: homotopy negligible sets} holds.
\end{example}

\begin{example}
For the space $\overline{X}=X\sqcup R$, described in Example
\ref{Example: compactifications of a ray}\ref{Subexample: sin(1/x) curve}, $R$
is an $\Omega$-set but not a $\mathcal{Z}$-set, and the conclusion of
Proposition \ref{Proposition: homotopy negligible sets} fails.
\end{example}

A metric compactification $\overline{X}=X\sqcup R$ of a space $X$ is called:

\begin{itemize}
\item a \emph{mapping negligible compactification} if $R$ is mapping
negligible in $\overline{X}$;

\item an $\Omega$\emph{-compactification} if $R$ is an $\Omega$-set in
$\overline{X}$;

\item a \emph{homotopy negligible compactification} if $R$ is homotopy
negligible in $\overline{X}$; and

\item a $\mathcal{Z}$\emph{-compactification} if $R$ is a $\mathcal{Z}$-set.
in $\overline{X}$.
\end{itemize}

All compactifications in this paper are obtained via the strategy described in
Section \ref{Subsection: Inverse and direct limits associated with X}. The
question then becomes: When does that procedure yield a compactification
satisfying one (or more) of the above definitions?

\subsection{Mapping negligible compactifications and $\Omega$-compactifications}

Our most general compactification theorem is the following. In the proof,
$\simeq$ denotes homotopic maps, and a composition of bonding maps
$r_{n-1}\circ\cdots\circ r_{m}:C_{m}\rightarrow C_{n}$ is denoted $r_{n}^{m}$.

\begin{theorem}
\label{Theorem: General compactification theorem}Let $C_{0}\overset{r_{1}%
}{\longleftarrow}C_{1}\overset{r_{2}}{\longleftarrow}C_{2}\overset{r_{3}%
}{\longleftarrow}\cdots$ be an inverse sequence of compact metric spaces and
retractions and let $X_{I}=%
{\textstyle\bigcup_{i=0}^{\infty}}
C_{i}$ endowed with the topology $\mathcal{T}_{I}$. Adopt all other notational
conventions established in Section \ref{Section: The general setting}. Then

\begin{enumerate}
\item $\underleftarrow{\lim}\left\{  C_{i},r_{i}\right\}  $ is a mapping
negligible compactification of $e\left(  X\right)  $,

\item equivalently, $\overline{X_{I}}=X_{I}\sqcup R$ is a mapping negligible
compactification of $X_{I}$,

\item if the conditions in Theorem \ref{Theorem: TFAE with T_I = T_D} hold,
then these are $\Omega$-compactifications, and

\item if each $r_{i}$ is a homotopy equivalence, then the projection maps
$\pi_{j}:\underleftarrow{\lim}\left\{  C_{i},r_{i}\right\}  \rightarrow C_{j}$
and their analogs $\pi_{j}^{\prime}:\overline{X_{I}}\rightarrow C_{j}$ are
shape equivalences.
\end{enumerate}
\end{theorem}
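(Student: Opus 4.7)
My plan for parts (1) and (2) is to exhibit, for each $j$, an explicit continuous self-map $f_j$ of $\underleftarrow{\lim}\{C_i,r_i\}$ that lands in $e(X)$, and to show these converge uniformly to the identity. Define
\[
f_j(x_0, x_1, x_2, \ldots) = (x_0, x_1, \ldots, x_{j-1}, x_j, x_j, x_j, \ldots).
\]
Because each $r_i$ restricts to the identity on $C_j$ for $i > j$, this sequence satisfies the bonding relations and lies in the inverse limit. Using the identity $x_i = r_{ij}(x_j)$, which holds for $i \leq j$ since $\mathbf{x}$ is in the inverse limit, one checks that $f_j(\mathbf{x}) = e(x_j) \in e(X)$. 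With the standard metric $d(\mathbf{x},\mathbf{y}) = \sum_i 2^{-i} \min(1, d_i(x_i,y_i))$ generating the product topology on $\prod C_i$, the bound $d(f_j(\mathbf{x}), \mathbf{x}) \leq 2^{-j}$ is uniform in $\mathbf{x}$. Given an open cover $\mathcal{U}$ of the inverse limit, I pick $j$ with $2^{-j}$ below a Lebesgue number for $\mathcal{U}$, so that $f_j$ is $\mathcal{U}$-close to $\operatorname{id}$. This proves (1); and (2) is a transfer along the homeomorphism $\overline{e}\colon \overline{X_I} \to \underleftarrow{\lim}\{C_i,r_i\}$ provided by the corollary following Theorem~\ref{Theorem: e is an embedding into the inverse limit}.

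For (3), the only added content is that $R$ must be closed. Proposition~\ref{Proposition: local compactness of X} guarantees that $X_I$ is open in $\overline{X_I}$ under the hypotheses of Theorem~\ref{Theorem: TFAE with T_I = T_D}, so $R$ is closed, and a closed mapping negligible set is by definition an $\Omega$-set.

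For (4), I plan to deduce the statement from the standard fact in shape theory that an inverse sequence in the pro-homotopy category whose bonding maps are all homotopy equivalences is pro-isomorphic to each of its constant level subsystems. Since each $C_i$ is compact metric, the inverse sequence $\{C_i, r_i\}$, after being replaced if necessary by an $\mathrm{HPol}$-expansion obtained from ANR neighborhoods of the $C_i$ in a Hilbert cube, is a shape-theoretic approximation of the inverse limit in the sense of Mardesic--Segal. The projection $\pi_j$ represents the resulting pro-isomorphism at level $j$, so $\pi_j$ is a shape equivalence; the analogous statement for $\pi_j' = \pi_j \circ \overline{e}$ follows from $\overline{e}$ being a homeomorphism.

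The main obstacle is (4), where one has to set up the Mardesic--Segal formalism carefully, particularly since the $C_i$ are not assumed to be ANRs, and identify $\pi_j$ as the representative of the pro-isomorphism. Parts (1)--(3) reduce essentially to the explicit truncation construction $f_j$ together with direct appeals to already-established results in the excerpt.
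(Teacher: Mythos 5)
Your treatment of parts (1)--(3) is exactly the paper's: your truncation map $f_j$ is literally $e\circ\pi_j$ (the paper's one-line proof is ``apply $e\circ\pi_i$ for arbitrarily large $i$''), and you have simply spelled out the metric estimate and Lebesgue-number step that the authors leave implicit; part (3) is handled identically via Proposition \ref{Proposition: local compactness of X}. For part (4) you take a genuinely different, though equivalent, route. The paper also observes that the claim is routine when the $C_i$ are ANRs, and for arbitrary compacta it uses a ``continuity condition'' of Dydak--Segal: every map from $\underleftarrow{\lim}\{C_i,r_i\}$ into an ANR $K$ factors up to homotopy through some $\pi_n$, uniquely in the sense that two factorizations agree after composing with a further bonding map; combining this with a homotopy inverse of $r_0^n$ gives the unique factorization through $\pi_0$ that characterizes a shape equivalence. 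You instead invoke the Mardesic--Segal expansion/pro-category formalism: realize $\{C_i,r_i\}$ as (inducing) an $\mathrm{HPol}$-expansion of the limit and note that a tower of pro-isomorphisms is pro-isomorphic to each of its terms. Both arguments rest on the same underlying fact (continuity of shape under inverse limits of compacta); the paper's version is more hands-on and verifies the defining property of a shape equivalence directly, while yours outsources more to the general machinery and, as you acknowledge, still requires you to check that the topological map $\pi_j$ represents the pro-categorical projection under the expansion --- that identification is part of the Mardesic--Segal continuity theorem, so the plan closes, but it is the one step you have not actually carried out. One cosmetic point: your bound should be $d(f_j(\mathbf{x}),\mathbf{x})\le 2^{-j+1}$ rather than $2^{-j}$, which changes nothing.
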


\begin{proof}
For the first assertion, apply $e\circ\pi_{i}$ for arbitrarily large $i$; the
second assertion follows immediately. The third is an application of
\ref{Proposition: local compactness of X}.

The final assertion is a routine application of the inverse sequence approach
to shape theory, if the $C_{i}$ are assumed to be finite polyhedra---or even
just ANRs. In particular, $C_{0}\overset{r_{1}}{\longleftarrow}C_{1}%
\overset{r_{2}}{\longleftarrow}C_{2}\overset{r_{3}}{\longleftarrow}\cdots$
then represents the shape of $\underleftarrow{\lim}\left\{  C_{i}%
,r_{i}\right\}  $, and is pro-homotopic to each constant inverse sequence
$C_{j}\overset{\operatorname*{id}}{\longleftarrow}C_{j}%
\overset{\operatorname*{id}}{\longleftarrow}\cdots$. For a brief introduction
to that approach, see \cite[\S 3.7]{Gui16}. If the $C_{i}$ are permitted to be
arbitrary compacta, a more subtle approach is needed. The following argument
was communicated to us by Jerzy Dydak. It uses the approach to shapes which
considers sets of maps into ANRs.

By \cite[Th.4.1.5]{DySe78} (definition on p.22), $\{C_{i},r_{i}\}$ satisfies
the following \textquotedblleft continuity condition\textquotedblright:\ Any
map $f:\underleftarrow{\lim}\left\{  C_{i},r_{i}\right\}  \rightarrow K$,
where $K$ is an ANR, factors up to homotopy as $f\simeq g\circ\pi_{n}$, where
$g:C_{n}\rightarrow K$, for some $n$. And, if $g\circ\pi_{n}\simeq h\circ
\pi_{n}$, then there is $m>n$ such that $g\circ r_{n}^{m}\simeq h\circ
r_{n}^{m}$.

To show that $\pi_{0}:\underleftarrow{\lim}\left\{  C_{i},r_{i}\right\}
\rightarrow C_{0}$ is a shape equivalence, we need to show that every map
$f:\underleftarrow{\lim}\left\{  C_{i},r_{i}\right\}  \rightarrow K$ factors
uniquely, up to homotopy, via $\pi_{0}$. It factors as $g\circ\pi_{n}$ for
some $n\geq0$, so the homotopy inverse $s:C_{0}\rightarrow C_{n}$ of
$r_{0}^{n}$ satisfies $s\circ\pi_{0}\simeq\pi_{n}$ (as $r_{0}^{n}\circ\pi
_{n}=\pi_{0}$), and therefore $f\simeq\left(  g\circ s\right)  \circ\pi_{0}$.

For uniqueness, suppose $g,h:C_{0}\rightarrow K$ satisfy $g\circ\pi_{0}\simeq
h\circ\pi_{0}$. Then for some $n>1$, $g\circ r_{0}^{n}\simeq h\circ r_{0}^{n}$
and composing with $s$ gives $g\simeq h$. The same argument applies to
$\pi_{i}$ for $i>0$.
\end{proof}

\begin{remark}
Section \ref{Section: Examples} contains several examples where the projection
maps are homotopy equivalences and others where they are shape equivalences
but not homotopy equivalences.
\end{remark}

\subsection{Homotopy negligible compactifications and $\mathcal{Z}%
$-compactifications\label{Subsection: Homotopy negligible compactifications}}

We are ready to prove one of our main theorems.

\begin{theorem}
\label{Theorem: homotopy negligible compactifications}Let $C_{0}%
\overset{\rho_{1}}{\swarrow}C_{1}\overset{\rho_{2}}{\swarrow}C_{2}%
\overset{\rho_{3}}{\swarrow}\cdots$ is an inverse sequence of compact metric
spaces and compact collapses, and let $X_{I}=%
{\textstyle\bigcup_{i=0}^{\infty}}
C_{i}$ endowed with the topology $\mathcal{T}_{I}$. Adopt all other notational
conventions established in Section \ref{Section: The general setting}. Then
there is a homotopy $H:\underleftarrow{\lim}\left\{  C_{i},\rho_{i}\right\}
\times\left[  0,1\right]  \rightarrow\underleftarrow{\lim}\left\{  C_{i}%
,\rho_{i}\right\}  $ such that $H_{t}\left(  \underleftarrow{\lim}\left\{
C_{i},\rho_{i}\right\}  \right)  \subseteq e\left(  X\right)  $ for all $t>0$;
moreover, for each $i\geq0$, $\left.  H\right\vert _{\underleftarrow{\lim
}\left\{  C_{i},\rho_{i}\right\}  \times\left[  0,\frac{1}{2^{i}}\right]  }$
is a compact collapse of $\underleftarrow{\lim}\left\{  C_{i},\rho
_{i}\right\}  $ onto $e\left(  C_{i}\right)  $. By letting $J=\overline
{e}^{-1}\circ H\circ\left(  \overline{e},\operatorname*{id}\right)  $, it
follows that

\begin{enumerate}
\item $\overline{X_{I}}=X_{I}\sqcup R$ is a homotopy negligible
compactification of $X_{I}$,

\item $\left.  J\right\vert _{\overline{X}\times\left[  0,\frac{1}{2^{i}%
}\right]  }$ is a compact collapse of $\overline{X_{I}}$ onto $C_{i}$ for each
$i\geq0$,

\item $X_{I}$ deformation retracts onto $C_{i}$ for each $i\geq0$, and

\item if the conditions in Theorem \ref{Theorem: TFAE with T_I = T_D} hold,
then $\overline{X_{I}}$ is a $\mathcal{Z}$-compactification,
\end{enumerate}
\end{theorem}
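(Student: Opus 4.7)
The plan is to construct the homotopy $H$ explicitly by concatenating the track-faithful deformation retractions $F^i\colon C_i\times[0,1]\to C_i$ that witness each compact collapse $\rho_i$, arranged on a telescoping sequence of intervals $[a_{i+1},a_i]$ with $a_i=1/2^i$. For $\mathbf{x}=(x_0,x_1,x_2,\dots)\in\underleftarrow{\lim}\{C_i,\rho_i\}$ and $s\in[a_{i+1},a_i]$, I would set
\[
H_s(\mathbf{x})_k=\begin{cases} x_k & \text{if } k\leq i, \\ F^{i+1}_{\phi_i(s)}(x_{i+1}) & \text{if } k\geq i+1, \end{cases}
\]
where $\phi_i$ is the affine bijection $[a_{i+1},a_i]\to[0,1]$ with $\phi_i(a_{i+1})=0$, and $H_0=\operatorname{id}$. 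The design is essentially forced by the requirement $H_{a_i}(\mathbf{x})=e(x_i)$: coordinates $k\geq i+1$ must all traverse a common track from $x_{i+1}$ to $x_i=\rho_{i+1}(x_{i+1})$, and track-faithfulness of $F^{i+1}$ is precisely what keeps those coordinates consistent with the bonding map $\rho_{i+1}$.

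The first verification is that $H_s(\mathbf{x})$ genuinely lies in $\underleftarrow{\lim}\{C_i,\rho_i\}$, i.e.\ that $\rho_k(y_k)=y_{k-1}$ for every $k$. This is immediate for $k\leq i$; at $k=i+1$, track-faithfulness gives $\rho_{i+1}(F^{i+1}_{\phi_i(s)}(x_{i+1}))=\rho_{i+1}(x_{i+1})=x_i$; and for $k\geq i+2$, both $y_k$ and $y_{k-1}$ equal $F^{i+1}_{\phi_i(s)}(x_{i+1})\in C_{i+1}\subseteq C_{k-1}$, on which $\rho_k$ restricts to the identity. Endpoint matching at $s=a_i$ is straightforward since both adjacent pieces evaluate to $e(x_i)$. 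Next, I would check that the restriction of $H$ to $\underleftarrow{\lim}\{C_i,\rho_i\}\times[0,a_i]$ is a compact collapse onto $e(C_i)$: the $i$-th coordinate of $H_t(\mathbf{x})$ equals $x_i$ for every $t\in[0,a_i]$, so $H_{a_i}(H_t(\mathbf{x}))=e(x_i)=H_{a_i}(\mathbf{x})$, giving track-faithfulness; and $H_t$ fixes $e(C_i)$ pointwise because $F^{j+1}_t$ restricts to the identity on $C_j\supseteq C_i$ for every $j\geq i$. Compactness of the closure of the complement is automatic from compactness of the inverse limit.

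The main technical obstacle is continuity of $H$ at $s=0$, where infinitely many pieces accumulate. This is handled by exploiting the product topology on the inverse limit: a basic neighborhood of a point $\mathbf{x}_0$ is determined by finitely many coordinates $k\leq N$, and whenever $s\in[0,a_{N+1}]$ the construction forces $H_s(\mathbf{x})_k=x_k$ for every $k\leq N$. Joint continuity at $(\mathbf{x}_0,0)$ therefore reduces to continuity of the first $N+1$ coordinate projections, which is immediate.

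With $H$ in hand, statements (1)--(4) follow by transporting through the homeomorphism $\bar{e}$. Since $H_s(\underleftarrow{\lim}\{C_i,\rho_i\})\subseteq e(C_{i+1})\subseteq e(X)$ for every $s\in[a_{i+1},a_i]$, the homotopy $J$ pushes $\overline{X_I}$ into $X_I$ for all $s>0$, establishing (1); statement (2) is the transported compact-collapse property; restricting $J$ to $X_I$ gives (3); and for (4), the insulation hypothesis together with Proposition \ref{Proposition: local compactness of X} make $X_I$ open in $\overline{X_I}$, so the closed homotopy-negligible set $R$ is a $\mathcal{Z}$-set.
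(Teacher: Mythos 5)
Your proposal is correct and is essentially the paper's own construction: your coordinate-wise formula with the affine reparametrization $\phi_i(s)=2^{i+1}s-1$ on $[1/2^{i+1},1/2^i]$ produces exactly the homotopy the paper obtains by assembling level-wise homotopies $H_i:C_i\times[0,1]\rightarrow C_i$ into a commutative ladder and passing to inverse limits, with track-faithfulness invoked at the same point (consistency with the bonding maps) and continuity at $t=0$ handled the same way via finitely many coordinates. The derivations of (1)--(4) also match the paper's.
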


\begin{proof}
For each $i\geq1$, let $\phi_{i}:C_{i}\times\left[  0,1\right]  \rightarrow
C_{i}$ be a track-faithful deformation retraction of $C_{i}$ onto $C_{i-1}$
culminating in $\rho_{i}$. Then define $H_{0}:C_{0}\times\left[  0,1\right]
\rightarrow C_{0}$ to be the projection map and, for $i>0$, inductively define
$H_{i}:C_{i}\times\left[  0,1\right]  \rightarrow C_{i}$ by%
\[
H_{i}\left(  x,t\right)  =\left\{
\begin{tabular}
[c]{lll}%
$x$ & if & $0\leq t\leq1/2^{i}$\\
$\phi_{i}\left(  x,2^{i}t-1\right)  $ & if & $1/2^{i}\leq t\leq1/2^{i-1}$\\
$H_{i-1}\left(  \rho_{i}\left(  x\right)  ,t\right)  $ & if & $1/2^{i-1}\leq
t\leq1$%
\end{tabular}
\ \right.
\]
Using the track-faithful property of the $\phi_{i}$, one can check that the
following diagram commutes
\[%
\begin{array}
[c]{cccccccc}%
C_{0}\times\left[  0,1\right]   & \overset{\rho_{1}\times\operatorname*{id}%
}{\longleftarrow} & C_{1}\times\left[  0,1\right]   & \overset{\rho_{2}%
\times\operatorname*{id}}{\longleftarrow} & C_{2}\times\left[  0,1\right]   &
\overset{\rho_{3}\times\operatorname*{id}}{\longleftarrow} & C_{3}%
\times\left[  0,1\right]   & \overset{\rho_{4}\times\operatorname*{id}%
}{\longleftarrow}\cdots\\
H_{0}\downarrow\quad &  & H_{1}\downarrow\quad &  & H_{2}\downarrow\quad &  &
H_{3}\downarrow\quad & \\
C_{0} & \overset{\rho_{1}}{\longleftarrow} & C_{1} & \overset{\rho
_{2}}{\longleftarrow} & C_{2} & \overset{\rho_{3}}{\longleftarrow} & C_{3} &
\overset{\rho_{4}}{\longleftarrow}\cdots
\end{array}
\]

It therefore induces a map $H$ between the inverse limits of the top and
bottom rows. Since each bonding map in the top row is a product, the inverse
limit of that sequence factors as a product of inverse limits; and since the
second factor in each of those product maps is $\operatorname*{id}_{\left[
0,1\right]  }$, the second inverse limit is canonically homeomorphic to
$\left[  0,1\right]  $. So we have a homotopy
\[
H:\underleftarrow{\lim}\left\{  C_{i},\rho_{i}\right\}  \times\left[
0,1\right]  \rightarrow\underleftarrow{\lim}\left\{  C_{i},\rho_{i}\right\}
\]
which, by definition, is the restriction of $H_{0}\times H_{1}\times
H_{2}\times\cdots:\prod C_{i}\times\left[  0,1\right]  \rightarrow$ $\prod
C_{i}$.

To see that $H$ satisfies the desired properties, recall that, for each
$i\geq0$, $e\left(  C_{i}\right)  $ is the set of all points $\left(
x_{0},x_{1},x_{2},\cdots\right)  $ in $\underleftarrow{\lim}\left\{
C_{i},\rho_{i}\right\}  $ for which $x_{j}=x_{i}$ for all $j\geq i$ (see
Theorem \ref{Theorem: e is an embedding into the inverse limit}). For $t>0$,
choose $j$ such that $1/2^{j}\leq t\leq1/2^{j-1}$ and notice that, for $i\geq
j$ and $\left(  x_{0},x_{1},x_{2},\cdots\right)  \in\underleftarrow{\lim
}\left\{  C_{i},\rho_{i}\right\}  $
\[
H_{i}\left(  x_{i},t\right)  =H_{i}\left(  \rho_{i+1}\left(  x_{i+1}\right)
,t\right)  =H_{i+1}\left(  x_{i+1},t\right)
\]
so $H\left(  \left(  x_{0},x_{1},x_{2},\cdots\right)  ,t\right)  \in e\left(
C_{i}\right)  \subseteq e\left(  X\right)  $. Furthermore, if $\left(
x_{0},x_{1},x_{2},\cdots\right)  \in e\left(  C_{i}\right)  $, and
$t\leq1/2^{i}$ then $H_{i}\left(  x_{i},t\right)  =x_{i}$, so by the same
formula, $H_{j}\left(  x_{i},t\right)  =x_{i}$ for all $j>i$. Thus, $\left.
H\right\vert _{\underleftarrow{\lim}\left\{  C_{i},\rho_{i}\right\}
\times\left[  0,\frac{1}{2^{i}}\right]  }$ is a deformation retract onto
$e\left(  C_{i}\right)  $. A similar calculation shows that this homotopy is
track faithful.
\end{proof}

\begin{remark}
The above proof is a generalization and refinement of an argument outlined by
Jeremy Brazus in his \emph{Wild Topology }blog. See \cite{Bra19}.
\end{remark}

\subsection{Compactifications of ANRs}

In the context of geometric group theory and algebraic and geometric topology,
the spaces $Y$, $X$ and $\overline{X}$ discussed at the beginning of this
section are frequently ANRs. When that is the case, several stronger
assertions and equivalences hold. It will be useful to review a few of those.

A metric space $W$ is an \emph{absolute neighborhood retract }(ANR) if,
whenever it is embedded as a closed subset of a metric space $S$, a
neighborhood of $W$ in $S$ retracts onto $W$. We call $W$ an \emph{absolute
retract} (AR) if whenever $W$ is embedded as a closed subset of a metric space
$S$, the entire space $S$ retracts onto $W$. Equivalently, an AR is a
contractible ANR. Notice that we are requiring $W$ and $S$ to be metric spaces
in these definitions; not all treatments of ANR theory include those assumptions.

Key examples of ANRs include: manifolds; metric CW and simplicial complexes
(in particular, all locally finite complexes under the Whitehead topology);
and CAT(0) spaces. See \cite{Hu65} and \cite{Ont05}. The following two well
known propositions are useful for identifying larger classes of ANRs.

\begin{proposition}
A finite-dimensional metric space $W$ is an ANR if and only if it is locally
contractible; in other words, for every $x\in W$ and neighborhood $U$ of $x$,
there exists a neighborhood $V$ of $x$ that contracts in $U$.
\end{proposition}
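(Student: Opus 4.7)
I will prove the two implications separately; the nontrivial content lies in showing that a finite-dimensional, locally contractible metric space is an ANR (the classical Kuratowski--Dugundji--Borsuk theorem). The forward direction uses neither the dimension hypothesis nor anything beyond what is in the statement.

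For the forward direction, I would embed $W$ as a closed subset of a normed linear space $E$ (for instance, isometrically via the Kuratowski embedding $W\hookrightarrow C_b(W)$). The ANR property then produces an open neighborhood $N$ of $W$ in $E$ and a retraction $r\colon N\to W$. Given $x\in W$ and a neighborhood $U$ of $x$ in $W$, continuity of $r$ lets me pick an open ball $B\subseteq E$ about $x$ with $B\subseteq N$ and $r(B)\subseteq U$. The straight-line homotopy $H(y,t):=r\bigl((1-t)y+tx\bigr)$ is then a contraction of $V:=B\cap W$ inside $U$.

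For the reverse direction, set $n:=\dim W$ and embed $W$ as a closed subspace of a Banach space $E$; it suffices to construct a retraction $r\colon N\to W$ on some open neighborhood $N$ of $W$ in $E$. The essential auxiliary statement is a lemma of Borsuk--Kuratowski: for every $\varepsilon>0$ and every $k\le n$ there is a $\delta>0$ such that any map $S^k\to W$ whose image has diameter $<\delta$ extends over $B^{k+1}$ to a map into $W$ of image diameter $<\varepsilon$; this is proved by iterating the local contractibility hypothesis $n+1$ times. Next I would take a locally finite open cover $\{U_\alpha\}$ of $N\setminus W$ whose mesh and distance from $W$ jointly tend to zero, refined (using the covering-dimension characterization $\dim W = n$) so that its nerve $K$ has dimension at most $n$. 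Choosing for each $\alpha$ a vertex $v_\alpha\in W$ within a fixed multiple of $\operatorname{diam}(U_\alpha)$, I would extend the vertex map $v\colon K^{(0)}\to W$ over the skeleta of $K$ by induction on dimension, invoking the lemma to fill in each new simplex; pulling back through a partition of unity subordinate to $\{U_\alpha\}$ produces a map $N\setminus W\to W$, which is extended over $W$ by the identity.

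The main obstacle is verifying continuity of this extended retraction at each point of $W$. This requires a careful quantitative matching among the mesh of the cover, the distance-to-$W$ function, the Borsuk--Kuratowski constants $\delta(\varepsilon)$, and the number of skeletal induction stages. Finite-dimensionality enters twice: the induction over the nerve must terminate (so the bound $\dim K \le n$ is needed), and the extension lemma only iterates meaningfully up to dimension $n$. Without the dimension bound, neither step closes up, which is consistent with the well-known failure of the theorem for infinite-dimensional locally contractible spaces.
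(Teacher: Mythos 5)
The paper does not prove this proposition at all: it is stated as one of ``two well known propositions'' and implicitly deferred to the literature on retract theory (the result is the classical Kuratowski--Dugundji theorem; see Hu's \emph{Theory of Retracts}). Your outline is precisely the standard proof of that classical theorem, and both halves are structured correctly: the forward direction via a retraction of a neighborhood in an ambient normed space composed with the straight-line contraction of a ball, and the converse via the nerve of a fine cover of $N\setminus W$, a controlled skeletal extension using the iterated local contractibility (the $LC^{n}$ property with quantitative constants $\delta(\varepsilon)$), and a partition of unity. Three caveats, none fatal. First, the image of the Kuratowski embedding $W\hookrightarrow C_b(W)$ need not be closed unless $W$ is complete; the standard repair is to embed $W$ as a closed subset of a convex subset of a Banach space (Wojdys{\l}awski) or to use the Arens--Eells embedding, after which your straight-line homotopy still makes sense because the relevant segments stay in a convex set. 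Second, the claim that the cover of $N\setminus W$ can be refined so that its nerve has dimension at most $n$ is the most delicate combinatorial point: the cover lives in an (typically infinite-dimensional) ambient space, and the multiplicity bound must be extracted from $\dim W=n$ by controlling the trace of the cover near $W$; you gesture at this but do not carry it out. Third, you explicitly defer the continuity verification at points of $W$, which is where the quantitative matching of mesh, distance-to-$W$, and the constants $\delta(\varepsilon)$ actually happens. As submitted, this is an accurate and well-organized roadmap of the known proof rather than a complete argument, which is a reasonable match for a statement the paper itself treats as citable background.
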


\begin{proposition}
\label{Proposition: being an ANR is a local property}Being an ANR is a local
property. More precisely, every open subset of an ANR is and ANR, and a metric
space with the property that every point has an ANR neighborhood is an ANR.
\end{proposition}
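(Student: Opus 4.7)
The plan is to prove the two assertions separately; both are classical metric-ANR results essentially due to Hanner, and the machinery has not yet been introduced earlier in the excerpt, so I would either sketch the arguments or cite them from \cite{Hu65}.

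For the first assertion---that an open subset $U$ of an ANR $W$ is itself an ANR---I would use the standard characterization of metric ANRs as retracts of open subsets of normed linear spaces (via the Kuratowski or Arens--Eells embedding). Writing $W$ as the image of a retraction $r:V\to W$, with $V$ open in a normed linear space $L$, the preimage $r^{-1}(U)$ is an open subset of $L$ and $r$ restricts to a retraction $r^{-1}(U)\to U$. Thus $U$ is a retract of an open subset of a normed linear space, which gives the desired conclusion by the same characterization. If one prefers to avoid the embedding characterization, an alternative is to take an arbitrary closed embedding $U\hookrightarrow S$, form the adjunction space $T=S\sqcup_U W$ (metrizable because $U$ is closed in $S$ and open in $W$), apply the ANR property of $W$ to obtain a retraction $\rho\colon V'\to W$ on a neighborhood $V'$ of $W$ in $T$, and restrict to $\rho^{-1}(U)\cap S$, which is the desired neighborhood retraction onto $U$ in $S$.

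For the second assertion---that local ANR-ness implies global ANR-ness---I would follow Hanner's classical partition-of-unity argument. Embed $W$ as a closed subset of a metric space $S$; the goal is a retraction of some neighborhood of $W$ in $S$ onto $W$. By hypothesis $W$ admits an open cover $\{U_\alpha\}$ by ANRs, which by paracompactness may be refined to a locally finite open cover $\{V_\beta\}$ of a neighborhood of $W$ in $S$, so that each $V_\beta$ admits a partial retraction to the ANR containing $\overline{V_\beta}\cap W$. These partial retractions are then blended via a partition of unity subordinate to the refinement. The main obstacle---and the technical heart of Hanner's proof---is that a convex combination of points of $W$ need not lie in $W$; the standard resolution is to carry out the blending inside a Dugundji embedding of $W$ into a normed linear space (or, equivalently, via a nerve-of-cover simplicial construction), and then compose with a retraction supplied by the local ANR hypothesis to land back in $W$. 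Complete details are in \cite[Ch.~III]{Hu65}, and for our purposes a citation to that reference suffices.
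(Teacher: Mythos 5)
Your sketches are correct, and the paper itself offers no proof of this proposition at all---it is stated as a ``well known'' fact, with the relevant background implicitly covered by the references \cite{Hu65} and \cite{Han51} cited nearby. Both of your arguments (the retract-of-an-open-set characterization for the first assertion, and Hanner's local-to-global theorem for the second) are the standard ones and the citation to \cite{Hu65} is exactly where the paper would point the reader, so there is nothing to reconcile.
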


\begin{corollary}
In order for a locally compact space $X$ to admit a compactification
$\overline{X}=X\sqcup R$, in which $\overline{X}$ is an ANR, it is necessary
for $X$ to be an ANR.
\end{corollary}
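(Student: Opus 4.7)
The plan is to combine the two propositions immediately preceding this corollary with the first sentence of Proposition \ref{Proposition: local compactness of X}. The argument is extremely short: the corollary is essentially a bookkeeping consequence of already-established facts.

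First, I would invoke Proposition \ref{Proposition: local compactness of X}. Since $\overline{X}=X\sqcup R$ is a metric (hence Hausdorff) compactification of $X$, and $X$ is assumed locally compact, that proposition yields that $X$ is an open subset of $\overline{X}$. This is the one place where local compactness of $X$ enters: without it, $X$ need only be a dense subspace of $\overline{X}$, not an open one, and the remaining step would fail.

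Next I would apply Proposition \ref{Proposition: being an ANR is a local property}, which states that every open subset of an ANR is an ANR. Since $\overline{X}$ is by hypothesis an ANR, and $X$ is open in $\overline{X}$ by the previous step, it follows directly that $X$ is an ANR, completing the proof.

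There is essentially no obstacle here; the only thing to be careful about is ensuring the hypothesis of Proposition \ref{Proposition: local compactness of X} is met, namely that $\overline{X}$ is a Hausdorff compactification (which is included in the definition of \emph{compactification} adopted earlier in the paper) and that $X$ sits inside $\overline{X}$ as a subspace, both of which are immediate from the statement. No auxiliary constructions, homotopies, or retractions need to be built.
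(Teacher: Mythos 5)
Your proof is correct and is exactly the argument the paper intends (the corollary is stated without proof, immediately following the two propositions you cite): local compactness plus the Hausdorff hypothesis built into the paper's definition of compactification makes $X$ open in $\overline{X}$ by Proposition \ref{Proposition: local compactness of X}, and open subsets of ANRs are ANRs by Proposition \ref{Proposition: being an ANR is a local property}.
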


Versions of the following are woven into the literature in places where
$\mathcal{Z}$-compactifications are used to define and analyze group
boundaries. For example, \cite{Bes96}.

\begin{proposition}
\label{Proposition: equivalent to being a Z-set}For a closed subset $A$ of a
compact ANR $Y$, the following are equivalent:

\begin{enumerate}
\item $A$ is a $\mathcal{Z}$-set,

\item $A$ is an $\Omega$-set,

\item for every open set $U\subseteq Y$, the inclusion $U-A\hookrightarrow U$
is a homotopy equivalence.
\end{enumerate}
\end{proposition}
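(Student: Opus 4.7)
The plan is to prove the cycle $(1) \Rightarrow (2) \Rightarrow (1)$ together with $(1) \Leftrightarrow (3)$. The direction $(1) \Rightarrow (2)$ is essentially formal: the homotopy $H$ realizing homotopy negligibility has uniformly continuous tracks on the compactum $Y \times [0,1]$, so for any prescribed open cover $\mathcal{U}$ I can pick $\delta > 0$ so that every track $\{H_t(y) : t \in [0, \delta]\}$ has diameter below a Lebesgue number of $\mathcal{U}$, and then $f := H_{\delta}$ is a $\mathcal{U}$-close map into $Y - A$.

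The main obstacle is the converse $(2) \Rightarrow (1)$, whose key tool is Hanner's refinement lemma for the open subset $Y - A$ of $Y$, itself an ANR by Proposition \ref{Proposition: being an ANR is a local property}: for every open cover $\mathcal{U}$ of $Y - A$ there is a refining cover $\mathcal{V}$ such that any two $\mathcal{V}$-close maps into $Y - A$ are $\mathcal{U}$-homotopic inside $Y - A$. I would fix a metric $d$ on $Y$, choose a nested sequence of open covers $\mathcal{U}_n$ of $Y - A$ with $\operatorname{mesh}_d(\mathcal{U}_n) \to 0$ together with refinements $\mathcal{V}_n$, and apply $(2)$ to covers of $Y$ whose restrictions to $Y - A$ refine $\mathcal{V}_n$ (and whose elements meeting $A$ are thin neighborhoods of $A$) to produce maps $f_n \colon Y \to Y - A$ that are $\mathcal{V}_n$-close to $\operatorname{id}_Y$ and to one another in the topology of $Y - A$. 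Hanner's lemma then supplies homotopies $G_n \colon f_n \simeq f_{n+1}$ inside $Y - A$ with $\mathcal{U}_n$-small tracks; concatenating these on the intervals $[1/2^{n+1}, 1/2^n]$ and declaring $H_0 = \operatorname{id}_Y$ yields the desired homotopy, with continuity at $t = 0$ following from the mesh control. The delicate point---and the genuine difficulty---is arranging the $f_n$'s to be close in the coarser topology of the open ANR $Y - A$ and not merely in $Y$; this forces the careful selection of covers above.

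For $(1) \Rightarrow (3)$, given an open $U \subseteq Y$, I would use the strengthening (available in ANRs) that the witnessing homotopy for homotopy negligibility can be taken to be a $\mathcal{U}$-homotopy for arbitrarily fine open covers $\mathcal{U}$. Choosing $\mathcal{U}$ so that every element meeting $U$ lies inside $U$---for instance by refining a cover that separates $Y - U$ from a closed neighborhood of $\overline{A \cap U}$ contained in $U$---the restriction of such a $\mathcal{U}$-homotopy to $U$ is a deformation of $U$ inside $U$ pushing $U$ into $U - A$ for $t > 0$, so $A \cap U$ is homotopy negligible in $U$ and Proposition \ref{Proposition: homotopy negligible sets} delivers the homotopy equivalence. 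For the converse $(3) \Rightarrow (2)$, given an open cover $\mathcal{U}$ of $Y$, I would refine it to a finite cover $\{U_\alpha\}$ by open sets of diameter below a Lebesgue number of $\mathcal{U}$; applying $(3)$ to each $U_\alpha$ yields local maps $g_\alpha \colon U_\alpha \to U_\alpha - A$ together with homotopies to $\operatorname{id}_{U_\alpha}$ confined to $U_\alpha$, and a standard ANR partition-of-unity gluing assembles these into a global $g \colon Y \to Y - A$ that is $\mathcal{U}$-close to $\operatorname{id}_Y$, which is the required $\Omega$-witness.
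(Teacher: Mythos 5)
The paper does not prove this proposition from scratch: its entire proof is a citation of Toru\'{n}czyk's Equivalence Theorem (via \cite{Anc23} and \cite{Tor78}), which is an implicit acknowledgement that the equivalences---especially $(2)\Rightarrow(1)$ and $(3)\Rightarrow(1)$---are genuine theorems of ANR theory rather than formal consequences of the definitions. Your direction $(1)\Rightarrow(2)$ is fine (and is already observed in the paper where homotopy negligibility is defined). The other three implications each contain a gap. For $(2)\Rightarrow(1)$, the step you yourself flag as ``the delicate point'' is not resolved by your choice of covers: an element of a cover of $Y$ that meets $A$ cannot be contained in $Y-A$, so $\mathcal{U}$-closeness to $\operatorname{id}_Y$ for a cover $\mathcal{U}$ of $Y$ gives no control in terms of a cover $\mathcal{V}_n$ of $Y-A$, whose elements necessarily shrink near $A$. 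Worse, the scale at which $f_n$ and $f_{n+1}$ must agree in order to be $\mathcal{V}_n$-close depends on $d\left(f_n(Y),A\right)$, a quantity available only after $f_n$ has been constructed, while $d\left(f_n,\operatorname{id}\right)$ is by then already fixed; no advance selection of covers of $Y$ can force the needed closeness. A correct argument typically first produces a small connecting homotopy in $Y$ between $f_n$ and $f_{n+1}$ (Hanner applied to $Y$, not to $Y-A$), and then pushes that homotopy off $A$ by post-composing with a map $g:Y\to Y-A$ chosen \emph{after} $f_n$ and $f_{n+1}$, using compactness of $f_n(Y)\cup f_{n+1}(Y)$ to obtain a positive distance from $A$ at which the Lebesgue-number argument applies.

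For $(1)\Rightarrow(3)$ the cover you describe cannot exist: if $U$ is a proper open set with $\overline{U}\neq U$, no open cover of $Y$ has the property that every element meeting $U$ lies in $U$, and $\overline{A\cap U}$ need not be contained in $U$ (take $Y$ a square, $A$ one of its edges, and $U$ a half-open strip meeting that edge), so there may be no neighborhood of $\overline{A\cap U}$ inside $U$. The standard repair is a rescaling $G(y,t)=H\left(y,t\alpha(y)\right)$ for a continuous $\alpha:U\to(0,1]$ chosen so that each track $H\left(y,[0,\alpha(y)]\right)$ stays in $U$; this yields a homotopy witnessing that $A\cap U$ is homotopy negligible in $U$, after which Proposition \ref{Proposition: homotopy negligible sets} applies. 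Finally, $(3)\Rightarrow(2)$ via ``a standard ANR partition-of-unity gluing'' is not an available construction: one cannot average maps into an ANR, and arranging that a glued map still avoids $A$ is exactly the content of the theorem. That implication is Toru\'{n}czyk's theorem that locally homotopy negligible subsets of ANRs are homotopy negligible, and it requires nerve or dominating-polyhedron techniques. The honest options are to cite \cite{Tor78} as the paper does, or to commit to a complete proof of $(2)\Rightarrow(1)$ and $(3)\Rightarrow(1)$ along the lines indicated above.
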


\begin{proof}
All of these can be obtained from Toru\'{n}czyk's Equivalence Theorem from
\cite{Anc23}. See also \cite{Tor78}.
\end{proof}

In order to apply Proposition \ref{Proposition: equivalent to being a Z-set},
the following partial converse to Proposition
\ref{Proposition: being an ANR is a local property} and its corollary can be useful.

\begin{proposition}
\label{Proposition: homotopy negligible compactifications of ANRs}If $A$ is a
homotopy negligible subset of a compact metric space $Y$, and $Y-A$ is an ANR,
then $Y$ is an ANR. In other words, every homotopy negligible compactification
of an ANR is an ANR.
\end{proposition}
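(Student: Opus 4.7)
The plan is to invoke Hanner's classical characterization of ANRs in the metric category: a metric space $Y$ is an ANR if and only if, for every open cover $\mathcal{U}$ of $Y$, there exist an ANR $W$ and maps $\phi\colon Y \to W$ and $\psi\colon W \to Y$ such that $\psi \circ \phi$ is $\mathcal{U}$-close (equivalently, $\mathcal{U}$-homotopic) to $\mathrm{id}_Y$. (This is the \emph{domination criterion}; see Hu, \emph{Theory of Retracts}, Ch.~III.) Given this, the work of the proposition is almost entirely bookkeeping.

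Fix an open cover $\mathcal{U}$ of $Y$ and let $H\colon Y \times [0,1] \to Y$ be a homotopy witnessing that $A$ is homotopy negligible, so $H_0 = \mathrm{id}_Y$ and $H_t(Y) \subseteq Y - A$ for every $t > 0$. Since $Y$ is compact, a Lebesgue-number argument applied to $H$ produces a $\delta > 0$ such that for every $x \in Y$ the whole track $H(\{x\} \times [0,\delta])$ lies inside a single element of $\mathcal{U}$. Set $W := Y - A$, which is an ANR by hypothesis, take $\phi := H_\delta\colon Y \to W$, and let $\psi\colon W \hookrightarrow Y$ be the inclusion. Then $\psi \circ \phi = H_\delta$ is $\mathcal{U}$-close to $\mathrm{id}_Y$, so Hanner's criterion yields that $Y$ is an ANR.

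The only mild subtlety, which I regard as the main point to verify carefully rather than a genuine obstacle, is that $\phi$ must land in the ANR $W$ so that the composition $\psi \circ \phi$ makes sense; this is guaranteed precisely by the defining property of homotopy negligibility (that $H_t$ pushes $Y$ off $A$ for every positive $t$), not merely by mapping negligibility. Once $Y$ is known to be an ANR, the second sentence of the proposition is automatic: if $X$ is an ANR and $\overline{X} = X \sqcup R$ is a homotopy negligible compactification, then $R$ is closed in $\overline{X}$ with $\overline{X} - R = X$ an ANR, so the proposition just proved applies with $Y = \overline{X}$ and $A = R$.
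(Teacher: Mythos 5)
Your proof is correct and follows the same route as the paper, which simply cites Hanner's theorem (via Tirel's exposition); you have filled in exactly the details that citation suppresses: use the negligibility homotopy restricted to a short interval to get a $\mathcal{U}$-homotopy from $\mathrm{id}_Y$ to a map $H_\delta$ landing in the ANR $Y-A$, then invoke Hanner's domination criterion. One small caution: $\mathcal{U}$-close and $\mathcal{U}$-homotopic are not equivalent in general (Hanner's criterion wants the homotopy version), but your Lebesgue-number construction does produce a genuine $\mathcal{U}$-homotopy, so the argument stands.
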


\begin{proof}
Apply Hanner's Theorem \cite{Han51}, as described in \cite{Tir11}.
\end{proof}

\begin{corollary}
\label{Corollary: compactifications of ANRs}Suppose $\overline{X}=X\sqcup R$
is an $\Omega$-compactification of an ANR [resp., AR] $X$. Then $\overline{X}$
is a $\mathcal{Z}$-compactification of $X$ if and only if $\overline{X}$ is an
ANR [resp., AR].
\end{corollary}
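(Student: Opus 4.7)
The plan is to deduce the corollary by combining Proposition \ref{Proposition: equivalent to being a Z-set} (which identifies $\mathcal{Z}$-sets with $\Omega$-sets inside a compact ANR) with Proposition \ref{Proposition: homotopy negligible compactifications of ANRs} (which lifts ANR status from $X$ to $\overline{X}$ under a homotopy negligibility hypothesis). There is no substantive obstacle here; the task is to check that the hypotheses of the two cited results are in place in each direction.

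For the ANR statement, I would treat each direction separately. For the forward implication, assume $\overline{X}$ is a $\mathcal{Z}$-compactification, so $R$ is in particular homotopy negligible in $\overline{X}$. Since $X = \overline{X}-R$ is an ANR by hypothesis and $\overline{X}$ is compact metric, Proposition \ref{Proposition: homotopy negligible compactifications of ANRs} applies and yields that $\overline{X}$ is an ANR. For the reverse implication, assume $\overline{X}$ is an ANR. Then $\overline{X}$ is a compact ANR and $R$ is a closed subset that is an $\Omega$-set by the hypothesis of $\Omega$-compactification. Proposition \ref{Proposition: equivalent to being a Z-set} then directly gives that $R$ is a $\mathcal{Z}$-set, so $\overline{X}$ is a $\mathcal{Z}$-compactification of $X$.

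For the AR strengthening, it suffices to track contractibility. If $X$ is an AR and $\overline{X}$ is a $\mathcal{Z}$-compactification, then the ANR case just proved shows $\overline{X}$ is an ANR, and Proposition \ref{Proposition: homotopy negligible sets} shows that $X\hookrightarrow\overline{X}$ is a homotopy equivalence; since $X$ is contractible (being an AR), so is $\overline{X}$, whence $\overline{X}$ is an AR. Conversely, if $\overline{X}$ is an AR then it is in particular an ANR, so the ANR case immediately provides the $\mathcal{Z}$-compactification conclusion.

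The only subtlety worth flagging is that Proposition \ref{Proposition: homotopy negligible compactifications of ANRs} requires genuine homotopy negligibility rather than mere $\Omega$-set status, which is why the forward direction must be routed through the $\mathcal{Z}$-set hypothesis and not through the weaker $\Omega$-compactification assumption alone. Once this is noted, the rest of the argument is a pure assembly of previously stated results.
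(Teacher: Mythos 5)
Your proof is correct and follows exactly the paper's argument: the forward implication via Proposition \ref{Proposition: homotopy negligible compactifications of ANRs}, the reverse via Proposition \ref{Proposition: equivalent to being a Z-set}, and the AR refinement via Proposition \ref{Proposition: homotopy negligible sets}. You have simply written out the routine verifications that the paper leaves implicit.
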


\begin{proof}
The forward and reverse implications follow from Propositions
\ref{Proposition: homotopy negligible compactifications of ANRs} and
\ref{Proposition: equivalent to being a Z-set}, respectively. The assertion
about ARs relies on Proposition \ref{Proposition: homotopy negligible sets}.
\end{proof}

By using essentially the same argument, one can deduce a strengthening of
Proposition \ref{Proposition: homotopy negligible compactifications of ANRs},
that will be useful in the next section.

\begin{proposition}
\label{Proposition: strengthening of the Hanner application}Let $A$ is a
homotopy negligible subset of a compact metric space $Y$ and $H:Y\times\left[
0,1\right]  \rightarrow Y$ a corresponding homotopy with $H_{0}%
=\operatorname*{id}_{Y}$ and $H_{t}\left(  Y\right)  \subseteq Y-A$ for all
$t>0$. Suppose that for each $\varepsilon>0$, there exist a $t\in\left[
0,\varepsilon\right]  $ and an ANR $B_{t}\subseteq Y$ such that $H_{t}\left(
Y\right)  \subseteq B_{t}$, then $Y$ is an ANR.
\end{proposition}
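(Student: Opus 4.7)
The plan is to adapt the Hanner-based strategy of Proposition \ref{Proposition: homotopy negligible compactifications of ANRs} by invoking a standard approximation characterization of compact ANRs: a compact metric space $Y$ is an ANR if and only if, for every $\varepsilon > 0$, there exist a compact ANR $Z$ and maps $\phi : Y \to Z$ and $\psi : Z \to Y$ such that $\psi \circ \phi$ is $\varepsilon$-homotopic to $\operatorname{id}_Y$. (For compact $Y$, the $\varepsilon$-close and $\varepsilon$-homotopic versions are interchangeable. This is essentially Borsuk's finite polyhedral approximation theorem; see, e.g., Hu's \emph{Theory of Retracts} where it is derived from Hanner's work.) Our hypothesis has been engineered so that verifying this criterion is nearly automatic.

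Fix $\varepsilon > 0$. Since $H$ is uniformly continuous on the compact product $Y \times [0,1]$, there exists $\delta > 0$ such that $d(H_s(y), y) < \varepsilon$ whenever $y \in Y$ and $s \in [0, \delta]$. By the standing hypothesis, we may select $t_0 \in [0, \delta]$ together with a compact ANR $B_{t_0} \subseteq Y$ satisfying $H_{t_0}(Y) \subseteq B_{t_0}$. Put $Z := B_{t_0}$, let $\phi := H_{t_0} : Y \to Z$ (corestricted to $B_{t_0}$), and let $\psi : Z \hookrightarrow Y$ be the inclusion. Then $\psi \circ \phi = H_{t_0}$, and a linear reparametrization of $H|_{Y \times [0, t_0]}$ is a homotopy from $\operatorname{id}_Y$ to $\psi \circ \phi$ whose tracks each have diameter less than $\varepsilon$. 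As $\varepsilon > 0$ was arbitrary, the characterization yields that $Y$ is an ANR.

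The main obstacle is locating and correctly citing the approximation characterization; once it is in hand, everything else is immediate from continuity. For readers who prefer to avoid black-boxing that characterization, the direct alternative is to embed $Y$ in the Hilbert cube $Q$, use that the compact ANR $B_{t_0}$ admits a neighborhood retraction $r : W \to B_{t_0}$ in $Q$, extend $H_{t_0} : Y \to B_{t_0}$ across a neighborhood of $Y$ in $Q$ that still lands in $W$, and compose with $r$ to obtain an $\varepsilon$-retraction of a neighborhood of $Y$ onto $Y \subseteq Q$. A standard convergence argument then assembles these $\varepsilon$-retractions (as $\varepsilon \to 0$) into a genuine neighborhood retraction of $Y$ in $Q$, which is precisely the content of the characterization and proves $Y$ is an ANR.
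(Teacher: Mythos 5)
Your proposal is correct and matches the paper's approach: the paper proves this proposition (and Proposition \ref{Proposition: homotopy negligible compactifications of ANRs}) by citing Hanner's Theorem, i.e., precisely the $\varepsilon$-domination characterization of compact ANRs that you invoke, and your uniform-continuity argument supplies the details the paper leaves implicit. One tiny quibble: the hypothesis only gives an ANR $B_{t}\subseteq Y$, not necessarily a \emph{compact} one, but Hanner's criterion does not require the dominating ANR to be compact, so nothing is lost.
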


\begin{corollary}
\label{Corollary: more general ANR conclusion}If $\left\{  C_{i},\rho
_{i}\right\}  $ is an inverse sequence of compact ANRs and compact collapses,
and $\overline{X}=X_{I}\sqcup R$ is the corresponding homotopy negligible
compactification of $X_{I}$ (as in Theorem
\ref{Theorem: homotopy negligible compactifications}), then $\overline{X}$ is
an ANR.
\end{corollary}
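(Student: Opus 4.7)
The plan is to derive this corollary as a direct application of Proposition \ref{Proposition: strengthening of the Hanner application} to the homotopy negligible compactification produced by Theorem \ref{Theorem: homotopy negligible compactifications}. I will take $Y=\overline{X}$ and $A=R$, and let $H$ be the homotopy $J:\overline{X}\times[0,1]\rightarrow\overline{X}$ constructed in that theorem. From the conclusion of Theorem \ref{Theorem: homotopy negligible compactifications} we already have $J_{0}=\operatorname{id}_{\overline{X}}$ and $J_{t}(\overline{X})\subseteq X_{I}\subseteq\overline{X}-R$ for every $t>0$, which verifies the standing homotopy-negligibility hypothesis needed to invoke the strengthened Hanner statement.

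The decisive extra structure, already recorded in Theorem \ref{Theorem: homotopy negligible compactifications}, is that for each $i\geq0$ the restriction $J|_{\overline{X}\times[0,1/2^{i}]}$ is a compact collapse of $\overline{X}$ onto $C_{i}$; in particular, $J_{1/2^{i}}(\overline{X})\subseteq C_{i}$. Since $C_{i}$ is a compact ANR by hypothesis, for any prescribed $\varepsilon>0$ I will choose $i$ large enough that $1/2^{i}\leq\varepsilon$, and then take $t=1/2^{i}$ and $B_{t}=C_{i}$. This supplies exactly the ANR image $B_{t}\subseteq \overline{X}$ at a time $t\in[0,\varepsilon]$ that Proposition \ref{Proposition: strengthening of the Hanner application} requires, and its conclusion immediately gives that $\overline{X}$ is an ANR.

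I do not anticipate a real obstacle; the heavy lifting has already been done upstream. Theorem \ref{Theorem: homotopy negligible compactifications} provides, essentially for free, a homotopy that pushes $\overline{X}$ into the approximating compact ANRs $C_{i}$ at arbitrarily small positive times, and Proposition \ref{Proposition: strengthening of the Hanner application} is engineered precisely to convert such a structure into an ANR conclusion. The only bookkeeping point worth flagging is that the $C_{i}$ must be treated as honest subspaces of $\overline{X}$; this is legitimate via the embedding $e$ from Theorem \ref{Theorem: e is an embedding into the inverse limit} together with the canonical identification $\overline{X_{I}}=X_{I}\sqcup R\approx\underleftarrow{\lim}\{C_{i},\rho_{i}\}$ and Lemma \ref{Lemma: general topology items}(\ref{Item 2 of general topology}), which ensures that the subspace topology each $C_{i}$ inherits from $\overline{X}$ agrees with its original topology as a compact ANR.
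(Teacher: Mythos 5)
Your proof is correct and follows exactly the route the paper intends: the corollary is stated immediately after Proposition \ref{Proposition: strengthening of the Hanner application} precisely so that one applies it with $Y=\overline{X}$, $A=R$, $H=J$, and $B_{t}=C_{i}$ for $t=1/2^{i}\leq\varepsilon$, using the conclusion of Theorem \ref{Theorem: homotopy negligible compactifications} that $J_{1/2^{i}}$ carries $\overline{X}$ into the compact ANR $C_{i}$. Your bookkeeping remark about the $C_{i}$ sitting in $\overline{X}$ with their original topologies is also accurate and harmless to include.
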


\section{Applications and more examples}

We conclude this article with a variety of theorems and examples that can be
proved or reinterpreted using the ideas presented here.

\subsection{General applications to simplicial and CW complexes and mapping
telescopes}

The first pair of applications are immediate consequences of Theorem
\ref{Theorem: homotopy negligible compactifications} and the discussion in
Sections \ref{Section: The general setting} and
\ref{Section: Expansions and collapses in the simplicial category}.

\begin{theorem}
Let $K$ be a simplicial complex that collapses to a finite subcomplex $L$, and
let $\left\vert K\right\vert _{I}$ be the geometric realization of $K$ with
the topology $\mathcal{T}_{I}$ induced by a corresponding inverse sequence of
simplicial collapses beginning with $\left\vert L\right\vert $. Then the
compactification $\overline{\left\vert K\right\vert _{I}}=\left\vert
K\right\vert _{I}\sqcup R$ is homotopy negligible. Furthermore, there is a
compact collapse of $\overline{\left\vert K\right\vert _{I}}$ onto $\left\vert
L\right\vert $ and a deformation retraction of $\left\vert K\right\vert _{I}$
to $\left\vert L\right\vert $.
\end{theorem}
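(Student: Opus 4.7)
The plan is to recognize this as a direct application of Theorem \ref{Theorem: homotopy negligible compactifications}, whose hypotheses are easily seen to be met in the simplicial setting.

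First I would unpack the setup. The assumption $K\searrow L$ means there is a (possibly infinite) sequence of elementary combinatorial expansions $L=L_{0}\overset{e}{\swarrow}L_{1}\overset{e}{\swarrow}L_{2}\overset{e}{\swarrow}\cdots$ through subcomplexes of $K$ with $\cup L_{i}=K$. Passing to geometric realizations yields the inverse sequence of simplicial collapses $|L_{0}|\overset{r_{1}}{\longleftarrow}|L_{1}|\overset{r_{2}}{\longleftarrow}|L_{2}|\overset{r_{3}}{\longleftarrow}\cdots$ that, by the discussion in Section \ref{Subsection: Infinite simplicial expansions and collapses}, defines the topology $\mathcal{T}_{I}$ on $|K|$.

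Next I would verify the two standing hypotheses of Theorem \ref{Theorem: homotopy negligible compactifications}: that each $|L_{i}|$ is a compact metric space, and that each $r_{i}$ is a compact collapse. Since $L_{0}=L$ is finite by hypothesis and each elementary expansion adjoins only two new simplices, every $L_{i}$ is finite; hence each $|L_{i}|$ is a finite polyhedron, which is compact and metric. For the second hypothesis, each $r_{i}$ is the simplicial collapse $|L_{i}|\to|L_{i-1}|$ constructed in Section \ref{Subsection: simplicial expansions and collapses} as the culmination of the explicit straight-line deformation retraction extended by the identity. Example \ref{Example: elementary simplicial collapses} records that this deformation retraction is track-faithful, and its nontrivial part is supported on the interiors of the two simplices added by the expansion, whose closures lie in the compact simplex $|\sigma^{n}|\subseteq|L_{i}|$. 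Condition (b) of the definition of compact collapse therefore holds, so each $r_{i}$ is indeed a compact collapse.

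With the hypotheses in place, Theorem \ref{Theorem: homotopy negligible compactifications} delivers all three claims at once: conclusion (1) is precisely that $\overline{|K|_{I}}=|K|_{I}\sqcup R$ is a homotopy negligible compactification; conclusion (2) applied at $i=0$ yields the desired compact collapse of $\overline{|K|_{I}}$ onto $|L_{0}|=|L|$; and conclusion (3) at $i=0$ provides the deformation retraction of $|K|_{I}$ onto $|L|$. There is no serious obstacle here; the only point requiring attention is confirming that the classical simplicial collapse meets the topological definition of a compact collapse, and this reduces to a citation of the earlier material.
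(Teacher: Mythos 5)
Your proposal is correct and follows exactly the route the paper intends: the paper presents this theorem as an immediate consequence of Theorem \ref{Theorem: homotopy negligible compactifications} together with the material of Sections \ref{Section: The general setting} and \ref{Section: Expansions and collapses in the simplicial category}, and your verification that each $\left\vert L_{i}\right\vert$ is a finite (hence compact metric) polyhedron and that each elementary simplicial collapse is a compact collapse is precisely the bookkeeping being left implicit. The three conclusions then read off from parts (1)--(3) of that theorem at $i=0$, as you say.
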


\begin{corollary}
\label{Corollary: Z-compatifications of simplicial complexes}Let $K$ be a
locally finite simplicial complex that collapses to a finite subcomplex $L$,
and let $\left\vert K\right\vert $ be the geometric realization of $K$ with
its standard topology. Then there is a $\mathcal{Z}$-compactification
$\overline{\left\vert K\right\vert }=\left\vert K\right\vert \sqcup R$ such
that $\overline{\left\vert K\right\vert }$ is an ANR. Furthermore, there is a
compact collapse of $\overline{\left\vert K\right\vert }$ onto $\left\vert
L\right\vert $ and a deformation retraction of $\left\vert K\right\vert $ to
$\left\vert L\right\vert $. If $\left\vert L\right\vert $ is contractible,
then $\left\vert K\right\vert $ and $\overline{\left\vert K\right\vert }$ are ARs.
\end{corollary}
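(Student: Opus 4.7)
The plan is to derive this as a direct corollary of Theorem~\ref{Theorem: homotopy negligible compactifications}, using the locally finite hypothesis to match up topologies and ANR properties. Since $K \searrow L$, by the definition in Section~\ref{Subsection: Infinite simplicial expansions and collapses} we obtain a sequence of elementary combinatorial expansions $L = L_0 \overset{e}{\swarrow} L_1 \overset{e}{\swarrow} L_2 \overset{e}{\swarrow} \cdots$ through subcomplexes of $K$ with $\bigcup L_i = K$. Setting $C_i = |L_i|$, each $C_i$ is a finite polyhedron (hence compact ANR), and each elementary simplicial collapse $\rho_i : C_i \to C_{i-1}$ is a compact collapse by Example~\ref{Example: elementary simplicial collapses} and Example~\ref{Example: examples of compact collapses}. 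Combining any finitely many successive elementary simplicial collapses into one compact collapse via Lemma~\ref{Lemma: combining finitely many compact collapses} (this is needed only if one wants each $\rho_i$ compact; as stated each already is), we obtain an inverse sequence of compact ANRs and compact collapses to which Theorem~\ref{Theorem: homotopy negligible compactifications} applies.

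Next I would identify the topology on $|K|$. Since $K$ is locally finite, Proposition~\ref{Proposition: equivalence of the topologies} gives $\mathcal{T}_I = \mathcal{T}_D = \mathcal{T}_W = \mathcal{T}_B$ on $|K|$, so $|K|_I$ coincides with $|K|$ in its standard topology. In particular, each point of $|K|$ is insulated (this is essentially the content of the proof of Proposition~\ref{Proposition: equivalence of the topologies}, using the open star neighborhood at a sufficiently late stage), so the hypotheses of Theorem~\ref{Theorem: TFAE with T_I = T_D} hold. Thus Theorem~\ref{Theorem: homotopy negligible compactifications}(4) applies and $\overline{|K|} = |K| \sqcup R$ is a $\mathcal{Z}$-compactification. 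Corollary~\ref{Corollary: more general ANR conclusion} then yields that $\overline{|K|}$ is an ANR. The compact collapse of $\overline{|K|}$ onto $|L| = C_0$ comes from part (2) of Theorem~\ref{Theorem: homotopy negligible compactifications} with $i=0$, and the deformation retraction of $|K|$ onto $|L|$ from part (3) of the same theorem (restricting $J$ to $|K| \times [0, \tfrac12]$ and using $|K|_I = |K|$).

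For the final claim, suppose $|L|$ is contractible. The deformation retraction of $|K|$ onto $|L|$ shows $|K| \simeq |L|$, so $|K|$ is contractible; being a locally finite simplicial complex it is an ANR, hence an AR. Similarly, the compact collapse of $\overline{|K|}$ onto $|L|$ shows $\overline{|K|} \simeq |L|$ is contractible, and it is an ANR by the previous paragraph, so it is an AR.

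I expect the main (only real) obstacle to be the bookkeeping that the standard topology on the locally finite simplicial complex $|K|$ agrees with $\mathcal{T}_I$ built from the telescope of simplicial collapses; that is where the local finiteness is crucially used, and it is handled cleanly by Proposition~\ref{Proposition: equivalence of the topologies}. Everything else is an assembly of results already in hand.
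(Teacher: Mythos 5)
Your proposal is correct and follows essentially the same route as the paper: invoke Theorem~\ref{Theorem: homotopy negligible compactifications} for the sequence of elementary simplicial collapses, use Proposition~\ref{Proposition: equivalence of the topologies} (local finiteness) to identify $\mathcal{T}_I$ with the standard topology and verify the insulation conditions, and then apply the ANR machinery to upgrade the homotopy negligible compactification to a $\mathcal{Z}$-compactification. The only cosmetic difference is that the paper cites Proposition~\ref{Proposition: local compactness of X} and Corollary~\ref{Corollary: compactifications of ANRs} where you cite part~(4) of the main theorem and Corollary~\ref{Corollary: more general ANR conclusion}; these package the same facts.
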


\begin{proof}
Along with the previous theorem, apply Propositions
\ref{Proposition: local compactness of X} and
\ref{Proposition: equivalence of the topologies} and Corollary
\ref{Corollary: compactifications of ANRs}.
\end{proof}

\begin{remark}
Examples \ref{Example: cone on a sequence 2} and
\ref{Example: compactifications of a ray} show how $\mathcal{T}_{I}$ can
differ from the usual topologies on $\left\vert K\right\vert $, even when $K$
is locally finite. Examples \ref{Example: cone on a sequence 1} and
\ref{Example: compactifications of a ray} illustrate the importance of having
bonding maps that are compact collapses and not just arbitrary deformation retractions.
\end{remark}

By combining the above argument with Corollary \ref{Corollary: CW collapse},
we can obtain a version of Corollary
\ref{Corollary: Z-compatifications of simplicial complexes} for CW complexes.

\begin{theorem}
Let $X$ be a locally finite CW complex that collapses, via elementary CW
collapses, to a finite subcomplex $L$. Then there exists a $\mathcal{Z}%
$-compactification $\overline{X}=X\sqcup R$ such that $\overline{X}$ is an
ANR. Furthermore, there is a compact collapse of $\overline{X}$ onto $L$ and a
deformation retraction of $X$ to $L$. If $L$ is contractible, then $X$ and
$\overline{X}$ are ARs.
\end{theorem}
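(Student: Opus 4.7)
The plan is to mimic the proof of Corollary \ref{Corollary: Z-compatifications of simplicial complexes}, but replacing the role of simplicial collapses with compact collapses obtained via Corollary \ref{Corollary: CW collapse}. Concretely, the hypothesis that $X \searrow L$ via elementary CW collapses gives a (possibly infinite) sequence of subcomplexes $L = L_0 \nearrow L_1 \nearrow L_2 \nearrow \cdots$ with $\bigcup_i L_i = X$, where each $L_{i-1} \nearrow L_i$ is a finite sequence of elementary CW expansions. By Corollary \ref{Corollary: CW collapse}, each elementary CW collapse $L_i \searrow L_{i-1}$ is realized as a compact collapse; by Lemma \ref{Lemma: combining finitely many compact collapses}, I may combine the finitely many collapses at stage $i$ into a single compact collapse $\rho_i \colon L_i \to L_{i-1}$. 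This yields an inverse sequence of compact metric spaces and compact collapses
\[
L = L_0 \overset{\rho_1}{\longleftarrow} L_1 \overset{\rho_2}{\longleftarrow} L_2 \overset{\rho_3}{\longleftarrow} \cdots
\]
placing us exactly in the setup of Theorem \ref{Theorem: homotopy negligible compactifications}.

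Applying that theorem to $X_I := \bigcup_i L_i$ (topology $\mathcal{T}_I$) produces a homotopy negligible compactification $\overline{X_I} = X_I \sqcup R$, a compact collapse of $\overline{X_I}$ onto $L$, and a deformation retraction of $X_I$ onto $L$. The remaining task is to identify $\mathcal{T}_I$ with the usual (weak) CW topology on $X$ and to upgrade the homotopy negligibility to a $\mathcal{Z}$-compactification. For the topology identification, I verify the insulation hypothesis of Theorem \ref{Theorem: TFAE with T_I = T_D}: given $x \in X$, local finiteness ensures that only finitely many closed cells meet $x$, so some $L_j$ contains all of them together with an open neighborhood $U$ of $x$ that is disjoint from every cell of $X$ not already in $L_j$. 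Each $\rho_k$ (for $k > j$) collapses only cells outside $L_j$ onto their attaching regions in $L_{k-1}$, so a short induction along the sequence gives $\rho_{jk}^{-1}(U) = U$; hence $x$ is $j$-insulated. Therefore $\mathcal{T}_I = \mathcal{T}_D$, and this in turn coincides with the standard CW topology on $X$ because locally finite CW complexes are metrizable ANRs on which the weak, metric, and coherent topologies agree.

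Since the conditions of Theorem \ref{Theorem: TFAE with T_I = T_D} hold, part (4) of Theorem \ref{Theorem: homotopy negligible compactifications} gives that $\overline{X} = X \sqcup R$ is a $\mathcal{Z}$-compactification. To conclude that $\overline{X}$ is an ANR, I invoke Corollary \ref{Corollary: more general ANR conclusion}: each $L_i$ is a compact ANR (finite CW complex), the sequence $\{L_i, \rho_i\}$ consists of compact ANRs and compact collapses, and $\overline{X}$ is the resulting homotopy negligible compactification. Equivalently, one may apply Corollary \ref{Corollary: compactifications of ANRs}, using that $X$ is an ANR (locally finite CW complex) and $\overline{X}$ is an $\Omega$-compactification. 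For the final assertion, suppose $L$ is contractible. The deformation retraction of $X$ onto $L$ makes $X$ contractible, and the compact collapse of $\overline{X}$ onto $L$ makes $\overline{X}$ contractible; a contractible ANR is an AR, so both are ARs.

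The main obstacle is the insulation verification, because elementary CW collapses are less rigid than simplicial ones: the attaching map of the collapsed cell can be topologically complicated, and one must check that no single step of the inverse sequence drags points from outside $L_j$ into $U$. Local finiteness is exactly what rescues the argument, by making the set of cells affecting any neighborhood of $x$ finite and hence eventually exhausted by some $L_j$.
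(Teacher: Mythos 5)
Your proposal is correct and takes essentially the same route as the paper, which proves this theorem only by remarking that one should combine the argument for Corollary \ref{Corollary: Z-compatifications of simplicial complexes} with Corollary \ref{Corollary: CW collapse}; your write-up carries out exactly that combination (compact collapses from CW collapses, Lemma \ref{Lemma: combining finitely many compact collapses}, Theorem \ref{Theorem: homotopy negligible compactifications}, then the ANR corollaries). The one step not literally covered by the simplicial case, namely the insulation check replacing Proposition \ref{Proposition: equivalence of the topologies}, is the point you correctly flag, and it does go through: local finiteness gives each point an open neighborhood meeting only finitely many closed cells, and after excising the finitely many of those not containing $x$ one gets a neighborhood meeting only cells of some $L_j$, so the minimal-$k$ argument of the simplicial proof applies because each $\rho_k$ moves the collapsed cell only within its own closure.
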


Versions of the next theorem have played a key role in earlier work on
$\mathcal{Z}$-compactifications. See \cite{ChSi76}.

\begin{theorem}
Let $K_{0}\overset{f_{1}}{\longleftarrow}K_{1}\overset{f_{2}}{\longleftarrow
}K_{2}\overset{f_{3}}{\longleftarrow}\cdots$ be an inverse sequence of compact
metric spaces. Then there is a $\mathcal{Z}$-compactification
\[
\overline{\operatorname*{Tel}\left(  K_{i},f_{i}\right)  }=\operatorname*{Tel}%
\left(  K_{i},f_{i}\right)  \sqcup R
\]
where $R=\underleftarrow{\lim}(\left\{  K_{i},f_{i}\right\}  )$. The spaces
$\overline{\operatorname*{Tel}\left(  K_{i},f_{i}\right)  }$ and
$\operatorname*{Tel}\left(  K_{i},f_{i}\right)  $ are ANRs if and only if each
$K_{i}$ is an ANR.
\end{theorem}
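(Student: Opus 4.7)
The plan is to realize $\operatorname*{Tel}(K_i,f_i)$ within the framework of Section \ref{Section: The general setting} via the setup already described in Example \ref{Example: Mapping telescope}. Set $C_0 = K_0$ and, for $j\geq 1$, $C_j = \operatorname*{Map}(f_1)\cup_{K_1}\cdots\cup_{K_{j-1}}\operatorname*{Map}(f_j)$, with bonding maps $r_j\colon C_j\to C_{j-1}$ the mapping cylinder collapses. First, I would verify that each $r_j$ is a compact collapse: Example \ref{Example: mapping cylinder collapse} gives the track-faithful deformation retraction of the single cylinder $\operatorname*{Map}(f_j)$ onto $K_{j-1}$, and Lemma \ref{Lemma: extending a topological collapse} extends it across the rest of $C_{j-1}$.

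Next, I would argue that $\{C_i,r_i\}$ is fully insulated (every point of $X = \bigcup C_j$ has a stationary open neighborhood in some $C_k$), so by Theorem \ref{Theorem: TFAE with T_I = T_D} the topologies $\mathcal{T}_I$ and $\mathcal{T}_D$ agree and coincide with the standard topology on the telescope, and by Proposition \ref{Proposition: local compactness of X}, $X_I$ is open in $\overline{X_I}$. Invoking Theorem \ref{Theorem: homotopy negligible compactifications}(4) then produces a $\mathcal{Z}$-compactification $\overline{X_I} = X_I\sqcup R$. To identify $R$ with $\underleftarrow{\lim}\{K_i,f_i\}$, I would verify that a thread $(c_0,c_1,\ldots)\in\underleftarrow{\lim}\{C_i,r_i\}$ either eventually stabilizes (and so represents a point of $e(X)$) or has $c_i\notin C_{i-1}$ infinitely often; in the latter case, for each such $i$ one has $c_{i-1}=r_i(c_i)\in K_{i-1}$, and backward iteration forces $c_j\in K_j$ for every $j$, yielding a thread in $\underleftarrow{\lim}\{K_i,f_i\}$.

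For the ANR equivalence, the easier direction is the reverse implication: if each $K_i$ is an ANR then a standard induction (using that mapping cylinders of maps between ANRs are ANRs) shows each $C_j$ is an ANR, so Corollary \ref{Corollary: more general ANR conclusion} makes $\overline{\operatorname*{Tel}}$ an ANR, and the open subset $\operatorname*{Tel}$ inherits the ANR property. For the forward direction, if either $\overline{\operatorname*{Tel}}$ or $\operatorname*{Tel}$ is an ANR, I would exhibit $K_i$ as a neighborhood retract: a two-sided mapping-cylinder neighborhood of $K_i$ inside $\operatorname*{Map}(f_i)\cup\operatorname*{Map}(f_{i+1})$ retracts onto $K_i$ by applying the mapping cylinder collapses from each side, so $K_i$ is a retract of an open subset of an ANR and hence an ANR.

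The main obstacle will be the careful verification of the two claims from Example \ref{Example: Mapping telescope} --- the full insulation of the sequence $\{C_i,r_i\}$ and the identification of $R$ with $\underleftarrow{\lim}\{K_i,f_i\}$ --- since these are the load-bearing structural statements that feed the general machinery. Both reduce to careful bookkeeping about when coordinates of a thread lie in the domain ends of the constituent mapping cylinders; once they are pinned down, the $\mathcal{Z}$-compactification and ANR conclusions follow formally from the results already established.
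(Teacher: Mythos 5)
Your proposal is correct and follows essentially the same route as the paper: the paper's proof simply combines Example \ref{Example: Mapping telescope}, Example \ref{Example: mapping cylinder collapse}, and Theorem \ref{Theorem: homotopy negligible compactifications} for the main assertion, uses the mapping-cylinder-of-ANRs fact plus Corollary \ref{Corollary: more general ANR conclusion} for one direction of the ANR claim, and for the converse observes exactly as you do that $\operatorname*{Tel}(K_i,f_i)$ contains an open set retracting onto each $K_i$. Your write-up just fills in the bookkeeping (full insulation, identification of the non-stabilizing threads with $\underleftarrow{\lim}\{K_i,f_i\}$) that the paper delegates to the cited example.
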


\begin{proof}
The main assertion combines Examples \ref{Example: Mapping telescope} and
\ref{Example: mapping cylinder collapse} and Theorem
\ref{Theorem: homotopy negligible compactifications}. For the follow-up
assertion, apply \cite[IV.1]{Hu65} and Corollary
\ref{Corollary: more general ANR conclusion} to see that $\operatorname*{Tel}%
\left(  K_{i},f_{i}\right)  $ and$\overline{\operatorname*{Tel}\left(
K_{i},f_{i}\right)  }$ are ANRs when all $K_{i}$ are ANRs. For the converse,
note that for each $i$, $\operatorname*{Tel}\left(  K_{i},f_{i}\right)  $
contains an open set that retract onto $K_{i}$.
\end{proof}

\subsection{Applications to CAT(0)
spaces\label{Subsection: Applications to CAT(0) spaces}}

Our first theorem about CAT(0) spaces is well known and obtainable using
analytic techniques. The methods developed in this paper provide an
alternative approach.

\begin{theorem}
\label{Theorem: Z-compactification of CAT(0) spaces}Every proper CAT(0) space
$Y$ is $\mathcal{Z}$-compactifiable by addition of the visual sphere at
infinity. Moreover, $\overline{Y}=Y\sqcup\partial_{\infty}Y$ is an AR.
\end{theorem}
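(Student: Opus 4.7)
The plan is to realize $\overline{Y}$ as the inverse-limit compactification from Section \ref{Section: The general setting}, applied to the nested sequence of closed balls $C_i=B_d[y_0,i]$ with geodesic retractions, as already set up in Example \ref{Example: CAT(0) spaces}. Everything then follows from Theorem \ref{Theorem: homotopy negligible compactifications} and Corollary \ref{Corollary: more general ANR conclusion}, provided one upgrades each geodesic retraction $r_i\colon C_i\to C_{i-1}$ from a mere retraction to a \emph{compact collapse}. This is the one real verification step.

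The construction I would use is as follows. For each $y\in C_i$ let $\gamma_y\colon[0,d(y_0,y)]\to Y$ be the unit-speed geodesic from $y_0$ to $y$; uniqueness and continuous dependence on endpoints is where properness and the CAT(0) hypothesis are used. Define
\[
\phi_i(y,t)=\gamma_y\bigl((1-t)d(y_0,y)+t\min\{i-1,\,d(y_0,y)\}\bigr).
\]
Then $\phi_i(y,0)=y$, $\phi_i(y,1)=r_i(y)$, and $\phi_i$ is stationary on $C_{i-1}$, so $\phi_i$ is a deformation retraction culminating in $r_i$. The closure of $C_i\setminus C_{i-1}$ in $C_i$ is the closed annulus $\{y:i-1\le d(y_0,y)\le i\}$, which is compact because $Y$ is proper. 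To check track-faithfulness, note that if $y_1=\phi_i(y,t_0)$, then $y_1$ lies on $\gamma_y$, so $\gamma_{y_1}$ is the initial sub-geodesic of $\gamma_y$ of length $d(y_0,y_1)$; a direct substitution gives $\phi_i(y_1,1)=\gamma_y(\min\{i-1,d(y_0,y)\})=\phi_i(y,1)$. Hence $r_i$ is a compact collapse and
\[
C_0\overset{\rho_1}{\swarrow}C_1\overset{\rho_2}{\swarrow}C_2\overset{\rho_3}{\swarrow}\cdots
\]
is an inverse sequence of the type required by Theorem \ref{Theorem: homotopy negligible compactifications}.

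With this in hand, Theorem \ref{Theorem: homotopy negligible compactifications} produces a homotopy negligible compactification $\overline{X_{I}}=X_{I}\sqcup R$. Example \ref{Example: CAT(0) spaces} already observes that each $C_i$ is insulated (the open ball of radius $i-\tfrac12$ about $y_0$ gives a stationary neighbourhood inside $C_i$), so Theorem \ref{Theorem: TFAE with T_I = T_D} applies, giving $\mathcal{T}_I=\mathcal{T}_D$, identifying $X_I$ with $Y$ in its metric topology, and upgrading the compactification to a $\mathcal{Z}$-compactification via part (4) of Theorem \ref{Theorem: homotopy negligible compactifications}. The standard identification of $\overline{X_{I}}$ with the cone-topology compactification $Y\cup\partial_\infty Y$, worked out in \cite[\S II.8]{BrHa99} and sketched in Example \ref{Example: CAT(0) spaces}, then identifies $R$ with the visual boundary $\partial_\infty Y$.

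Finally, each $C_i$ is a proper CAT(0) space, hence a (compact) ANR, so Corollary \ref{Corollary: more general ANR conclusion} yields that $\overline{Y}$ is an ANR. Since $Y$ is contractible via the geodesic contraction $(y,s)\mapsto\gamma_y((1-s)d(y_0,y))$ to $y_0$, and since $R$ is homotopy negligible in $\overline{Y}$, Proposition \ref{Proposition: homotopy negligible sets} gives that $\overline{Y}$ is also contractible; a contractible ANR is an AR. The main technical step, and the only one requiring real content, is the verification that the geodesic retraction is track-faithful; every other ingredient is already assembled in the paper.
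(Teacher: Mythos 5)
Your proposal is correct and follows essentially the same route as the paper: the paper's proof simply cites Example \ref{Example: CAT(0) spaces}, Theorem \ref{Theorem: homotopy negligible compactifications}, and Proposition \ref{Proposition: homotopy negligible compactifications of ANRs}, leaving implicit the verification that the geodesic retractions are compact collapses. Your explicit check of track-faithfulness (and of the compactness of $\overline{C_i\setminus C_{i-1}}$ via properness) supplies exactly the detail the paper omits, and the rest of your argument matches the paper's step for step.
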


\begin{proof}
The basic properties of this compactification were covered in Example
\ref{Example: CAT(0) spaces}. The assertions that this is a $\mathcal{Z}%
$-compactification and that $\overline{Y}$ is an AR follow from
\ref{Theorem: homotopy negligible compactifications} and Propositions
\ref{Proposition: homotopy negligible compactifications of ANRs}.
\end{proof}

\begin{theorem}
[\cite{Gul23}]\label{Theorem: cubical Z-compactification}Every locally finite
CAT(0) cube complex is collapsible via a sequence of elementary cubical
collapses and therefore admits a $\mathcal{Z}$-compactification $\overline
{Y}=Y\sqcup\partial_{\square}Y$ corresponding to such a sequence.
\end{theorem}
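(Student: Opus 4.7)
The plan splits into two parts: a combinatorial step establishing collapsibility, and then an invocation of the compactification machinery developed earlier in the paper. Fix a base vertex $v_0 \in Y$ and, for each $n$, let $C_n$ be the combinatorial convex hull in $Y$ of the $1$-skeleton ball of radius $n$ around $v_0$. By local finiteness each $C_n$ is a finite subcomplex, it is itself a CAT(0) cube complex, and $\bigcup_{n \geq 0} C_n = Y$ with $C_0 = \{v_0\}$.

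The heart of the argument is to show that each inclusion $C_n \hookrightarrow C_{n+1}$ decomposes as a finite sequence of elementary cubical expansions, yielding an infinite expansion sequence $\{v_0\} = C_0 \overset{e}{\nearrow} C_1 \overset{e}{\nearrow} C_2 \overset{e}{\nearrow} \cdots$ in the sense of Section \ref{Subsection: Infinite compact expansions and collapses}. The strategy is an induction inside the finite pair $(C_{n+1}, C_n)$: locate a top-dimensional cube $\sigma$ of $C_{n+1} \setminus C_n$ possessing a free face at the current stage, perform the corresponding elementary cubical collapse of $\sigma$, and repeat until reaching $C_n$. To find such a $\sigma$, consider a hyperplane $H$ of $C_{n+1}$ disjoint from $C_n$, and pick a cube crossing $H$ whose face on the far side of $H$ lies in no other cube of the current complex. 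Existence of such a cube is where the CAT(0) hypothesis enters: at a vertex $v$ of the current complex at maximum combinatorial distance from $v_0$, the flag condition on the link of $v$ forces the simplex of ``outward'' directions to have an exposed face.

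Given this collapsibility, the sequence provides an inverse sequence of compact collapses $C_0 \overset{\rho_1}{\swarrow} C_1 \overset{\rho_2}{\swarrow} C_2 \overset{\rho_3}{\swarrow} \cdots$, since each elementary cubical collapse is a compact collapse by Example \ref{Example: elementary cubical collapse}. Theorem \ref{Theorem: homotopy negligible compactifications} then produces a homotopy negligible compactification $\overline{X_I} = X_I \sqcup R$; we set $\partial_\square Y := R$. Local finiteness of $Y$ guarantees that every point of $X_I$ is insulated (each cube meets only finitely many other cubes, so a small enough open star in some $C_j$ is stationary), so by Theorem \ref{Theorem: TFAE with T_I = T_D} we have $\mathcal{T}_I = \mathcal{T}_D$, and $X_I$ coincides with $Y$ under its standard CW topology. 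Part (4) of Theorem \ref{Theorem: homotopy negligible compactifications} then upgrades the compactification to a $\mathcal{Z}$-compactification.

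The main obstacle is the combinatorial existence claim in the second paragraph. The analogous statement fails badly for general (non-CAT(0)) cube complexes, where interlocked cubes can leave no free faces anywhere, so the CAT(0) hypothesis must be used essentially. The likely tools are the hyperplane/half-space structure together with Gromov's flag condition on vertex links, or alternatively combinatorial gate projections onto convex subcomplexes; either furnishes, at each intermediate stage, a cube that can be cleanly detached by a single elementary cubical collapse.
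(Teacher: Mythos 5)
The paper offers no proof of this theorem at all: both assertions are imported from the second author's dissertation \cite{Gul23}, and the surrounding remark only notes that the argument there is ``combinatorial rather than analytic.'' So the honest comparison is this. The second half of your proposal --- passing from cubical collapsibility to the $\mathcal{Z}$-compactification --- is correct and is exactly the route the paper's machinery is designed for: elementary cubical collapses are compact collapses (Example \ref{Example: elementary cubical collapse}), the resulting inverse sequence feeds into Theorem \ref{Theorem: homotopy negligible compactifications}, and local finiteness gives insulation of every point so that Theorem \ref{Theorem: TFAE with T_I = T_D} identifies $\mathcal{T}_I$ with the standard topology and part (4) upgrades the compactification to a $\mathcal{Z}$-compactification. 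That reduction is fine.

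The genuine gap is the first half, and it is not a small one: the claim that each finite pair $(C_{n+1},C_n)$ admits a collapse of $C_{n+1}$ onto $C_n$ through elementary cubical collapses is precisely the content of \cite{Gul23}, and your sketch does not establish it. Two specific problems. First, ``find a cube with a free face on the far side of a hyperplane disjoint from $C_n$'' must be certified not just once but at \emph{every} intermediate stage of the collapse, and the intermediate complexes are no longer CAT(0) cube complexes (nor convex, nor do they have flag links), so the flag condition on vertex links of $Y$ cannot be applied to ``the current complex'' as written; one must fix in advance an ordering of the cubes of $C_{n+1}\setminus C_n$ (dictated, e.g., by the hyperplane combinatorics or by the gate projection onto $C_n$) and verify freeness at each step of that ordering. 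Second, even the base observation needs care: in a CAT(0) cube complex the cubes containing a vertex $v$ of maximal distance from $v_0$ and pointing ``inward'' span a single cube by the gate property, but turning this into an exposed \emph{codimension-one free face} of a specific top-dimensional cube of $C_{n+1}\setminus C_n$ is where the work lies. Your own closing paragraph concedes that this is ``the main obstacle'' and lists ``likely tools''; that is a research plan, not a proof, and without it the theorem reduces to its second, easier half.
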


\begin{remark}
The $\mathcal{Z}$-compactification in Theorem
\ref{Theorem: cubical Z-compactification} differs from the one in Theorem
\ref{Theorem: Z-compactification of CAT(0) spaces}. The proof in \cite{Gul23}
is combinatorial rather than analytic, and the resulting boundary (referred to
as a \emph{cubical boundary}) retains some of the combinatorial structure of
$Y$. Further investigation of that compactification is part of an ongoing
project by the authors of this paper.

An interesting sort of partial converse to Theorem
\ref{Theorem: cubical Z-compactification} is contained in \cite{AdFu21}. There
it is shown that every collapsible locally finite simplicial complex admits a
subdivision that is a CAT(0) cube complex.
\end{remark}

\subsection{Applications to manifolds}

Finite sequences of simplicial and CW expansions and collapses play important
roles in the study of compact manifolds. See \cite{RoSa82}. The
generalizations developed here can be used to obtain some analogous theorems
for noncompact manifolds; we offer a sample in this section. For the sake of
brevity, we focus our attention to contractible open manifolds.

A countable simplicial complex $K$ (or its geometric realization $\left\vert
K\right\vert $) is a \emph{combinatorial n-manifold} if the link of each
$k$-simplex of $K$ is PL homeomorphic to either $\left\vert \partial
\sigma^{n-k}\right\vert $ or $\left\vert \sigma^{n-k-1}\right\vert $. The
simplices with links of the latter type constitute a subcomplex $\partial
K\leq K$. Under these circumstances, $\left\vert K\right\vert $ satisfies the
topological definition of an $n$-manifold with $\partial\left\vert
K\right\vert =\left\vert \partial K\right\vert $. A \emph{PL n-manifold} is a
space $M^{n}$ together with a triangulation $h:\left\vert K\right\vert
\rightarrow M^{n}$ by a combinatorial $n$-manifold; we call this a \emph{PL
triangulation} of $M^{n}$.

A homeomorphism $h:\left\vert K\right\vert \rightarrow M^{n}$, where $M^{n}$
is a topological $n$-manifold and $K$ is \emph{not} a combinatorial
$n$-manifold, is called a \emph{non-PL triangulation }of $M^{n}$. The
existence of non-PL triangulations of manifolds is a deep fact first proved by
Edwards \cite{Edw06}. That work implies that all PL manifolds of dimension
$\geq5$ admit non-PL triangulations. By contrast, it is known that non-PL
triangulations do not exist in dimensions $\leq4$.

Our first theorem is an easy one.

\begin{theorem}
\label{Theorem: collapsing R^n}For all $n$, $%
\mathbb{R}
^{n}$ admits a simplicially collapsible PL triangulation.
\end{theorem}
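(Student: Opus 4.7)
The plan is to construct a locally finite PL triangulation of $\mathbb{R}^n$ that is combinatorially collapsible in the infinite sense of Subsection \ref{Subsection: Infinite simplicial expansions and collapses}, by presenting $\mathbb{R}^n$ as an infinite cone on $S^{n-1}$ with carefully matched collar shells.

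Fix a PL triangulation $T$ of $S^{n-1}$, identify $\mathbb{R}^n$ with $\{0\}\cup(S^{n-1}\times(0,\infty))$ via polar coordinates, and set $v=0$ and $B_k=\{v\}\cup(S^{n-1}\times(0,k])$, a topological closed $n$-ball with $\partial B_k=S^{n-1}\times\{k\}$. First I would triangulate $B_1$ as the simplicial cone $v*T$. Next, for each $k\geq 1$, I would triangulate the shell $\overline{B_{k+1}\setminus B_k}\approx|T|\times[k,k+1]$ by invoking Example \ref{Example: simplicially collapsing a product} to get a triangulation that restricts to the fixed triangulation $T$ on each of the two ends $|T|\times\{k\}$ and $|T|\times\{k+1\}$ and that collapses simplicially to either end. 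Gluing all these pieces along their matching sphere triangulations produces a single locally finite simplicial complex $K$ with $|K|\cong\mathbb{R}^n$; the link of every vertex in $K$ is a PL $(n-1)$-sphere, so $K$ is a combinatorial $n$-manifold and hence a PL triangulation of $\mathbb{R}^n$.

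By construction, $B_1=v*T$ combinatorially collapses to $\{v\}$ in finitely many elementary moves (a standard property of simplicial cones), and each shell $\overline{B_{k+1}\setminus B_k}$ combinatorially collapses in finitely many elementary moves to its inner boundary $\partial B_k$ by Example \ref{Example: simplicially collapsing a product}. Reversing and concatenating these finite blocks yields an infinite sequence of elementary combinatorial expansions
\[
\{v\}\overset{e}{\nearrow}\cdots\overset{e}{\nearrow}B_1\overset{e}{\nearrow}\cdots\overset{e}{\nearrow}B_2\overset{e}{\nearrow}\cdots\overset{e}{\nearrow}B_3\overset{e}{\nearrow}\cdots
\]
through subcomplexes of $K$ whose union is $K$; this is exactly the definition of combinatorial collapsibility given in Subsection \ref{Subsection: Infinite simplicial expansions and collapses}.

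The only nontrivial step is the gluing: the sphere $\partial B_k$ must carry the triangulation $T$ both from the side of $B_k$ and from the side of the outer shell, so that consecutive shells agree simplicially along their shared boundary. This is exactly what Example \ref{Example: simplicially collapsing a product} provides, since its product triangulation of $|T|\times[0,1]$ restricts to $T$ at both endpoints and collapses to either of them; no additional technical work is required.
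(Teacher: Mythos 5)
Your proposal is correct and follows essentially the same route as the paper: start with a triangulated $n$-ball (the paper uses a single $n$-simplex $\sigma^n$, you use the cone $v\ast T$ on a PL triangulation of $S^{n-1}$), attach successive product collars triangulated as in Example \ref{Example: simplicially collapsing a product}, and concatenate the finite collapses of each shell onto its inner boundary with the collapse of the initial ball to a vertex. Your version merely spells out the details (link verification, boundary matching) that the paper leaves implicit.
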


\begin{proof}
Begin with an \emph{n}-simplex $\sigma^{n}$, then inductively add PL
triangulations of $\left\vert \partial\sigma^{n}\right\vert \times\left[
k,k+1\right]  $ of the sort described in Example
\ref{Example: simplicially collapsing a product}.
\end{proof}

The converse of Theorem \ref{Theorem: collapsing R^n} is more striking. It can
be viewed as the noncompact analog of an important theorem which asserts that
every compact simplicially collapsible combinatorial $n$-manifold is a PL
$n$-ball \cite[Cor.3.28]{RoSa82}.

\begin{theorem}
\label{Theorem: collapsible implies R^n}If a combinatorial $n$-manifold
$\left\vert K\right\vert $ is simplicially collapsible, then $\left\vert
K\right\vert $ is PL homeomorphic to $%
\mathbb{R}
^{n}$.
\end{theorem}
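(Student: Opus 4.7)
The plan is to exhibit $|K|$ as a nested exhausting union of PL $n$-balls $N_1 \subseteq \operatorname{int} N_2 \subseteq N_2 \subseteq \operatorname{int} N_3 \subseteq \cdots$ with $\bigcup_i N_i = |K|$, and then invoke the standard PL recognition theorem for Euclidean space: a PL $n$-manifold without boundary that is expressible as such a union is PL homeomorphic to $\mathbb{R}^n$. This reduces the theorem to constructing the chain $\{N_i\}$ and verifying the no-boundary hypothesis.

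The collapsibility hypothesis yields an expansion sequence $\{v\} = |L_0| \overset{e}{\nearrow} |L_1| \overset{e}{\nearrow} |L_2| \overset{e}{\nearrow} \cdots$ through finite subcomplexes of $K$ with $\bigcup_i |L_i| = |K|$. Each $|L_i|$ is thus a finite simplicially collapsible subcomplex of the combinatorial PL $n$-manifold $|K|$, so by the Regular Neighborhood Theorem (cf.\ \cite[Cor.~3.27]{RoSa82}) every regular neighborhood $N(|L_i|)$ in $|K|$ is a PL $n$-ball. I would obtain the nesting by passing to a subsequence: set $N_1 := N(|L_0|)$, and inductively, given a PL $n$-ball $N_i$, use compactness of $N_i$ together with $\bigcup_j |L_j| = |K|$ to choose $k_i$ so large that $N_i \subseteq |L_{k_i}|$, then define $N_{i+1} := N(|L_{k_i}|)$. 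The resulting $N_{i+1}$ is a PL $n$-ball that contains $|L_{k_i}|$ --- and hence $N_i$ --- in its interior, because regular neighborhoods are neighborhoods of their defining subcomplex; exhaustion $\bigcup_i N_i = |K|$ is immediate from $\bigcup_i |L_{k_i}| = |K|$.

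The main obstacle is verifying $\partial K = \emptyset$, the hypothesis required by the recognition theorem; a priori a boundary simplex of $K$ could lie in the intrinsic boundary of each $N_i$ in a way aligned with $\partial |K|$, so the PL-ball exhaustion alone does not rule out a half-space conclusion. To handle this, I would analyze the combinatorics of the expansion $\{v\} \nearrow K$: a free face added at an elementary expansion either remains a free face of the ambient $K$ (and is thereby a boundary face) or is eventually matched by a second $n$-simplex later in the sequence, and an induction tracking these matchings --- supplemented by the contractibility of $|K|$ that comes from Corollary~\ref{Corollary: Z-compatifications of simplicial complexes} together with the combinatorial manifold structure --- should force every $(n-1)$-simplex to be a face of two $n$-simplices, i.e., $\partial K = \emptyset$. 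Once that is in hand, applying the recognition theorem to $\{N_i\}$ delivers $|K| \cong_{\mathrm{PL}} \mathbb{R}^n$.
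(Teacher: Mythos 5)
Your main construction coincides with the paper's proof: both take the finite collapsible subcomplexes $L_i$ furnished by the expansion sequence, observe that their regular neighborhoods in $\left\vert K\right\vert$ are PL $n$-balls, pass to a subsequence so that each ball lies in the (manifold) interior of the next, and conclude. The paper finishes by applying the Combinatorial Annulus Theorem \cite[Cor.3.19]{RoSa82} to write each difference $N_{i+1}-\operatorname{Int}N_{i}$ as $S^{n-1}\times\left[  i,i+1\right]$ and gluing; the recognition theorem you invoke is proved in exactly that way, so this is a cosmetic difference only.

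The genuine problem is your third step. Collapsibility does \emph{not} imply $\partial K=\varnothing$: a single $n$-simplex is a collapsible combinatorial $n$-manifold with boundary, and so is the standard triangulation of $[0,\infty)$ (expand outward from the vertex $0$), or of $\mathbb{R}^{n-1}\times\lbrack0,\infty)$. Hence no induction on the expansion sequence can show that every $\left(  n-1\right)$-simplex is a face of two $n$-simplices, and the half-space possibility you worry about is realized by honest collapsible combinatorial manifolds---so that part of your argument cannot be carried out. What your observation actually reveals is that the no-boundary condition must be an (implicit) hypothesis rather than a conclusion: the surrounding text supports this reading, since the preceding paragraph treats the compact case (where one gets a PL ball, not $\mathbb{R}^{n}$) and the corollary that follows is phrased for open manifolds. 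Once $\partial K=\varnothing$ is assumed, noncompactness is automatic (a collapsible complex is contractible, and no closed $n$-manifold is contractible for $n\geq1$), the nested balls automatically lie in one another's manifold interiors, and the remainder of your argument---which is then precisely the paper's argument---goes through. So: drop step three and add the hypothesis instead of trying to derive it.
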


\begin{proof}
Let $\left\{  v\right\}  =L_{0}\overset{e}{\swarrow}L_{1}\overset{e}{\swarrow
}L_{2}\overset{e}{\swarrow}\cdots$ be a corresponding sequence of elementary
combinatorial expansions through subcomplexes such that $\cup L_{i}=K$. Then
each $L_{i}$ is finite and collapsible, so a regular neighborhood $N_{i}$ of
$\left\vert L_{i}\right\vert $ in $\left\vert K\right\vert $ is a PL $n$-ball.
By choosing wisely (or passing to a subsequence) we can arrange that
$N_{i}\subseteq\operatorname*{Int}N_{i+1}$ for all $i$, so by the
Combinatorial Annulus Theorem \cite[Cor.3.19]{RoSa82}, $N_{i+1}%
-\operatorname*{Int}N_{i}$ is PL homeomorphic to $S^{n-1}\times\left[
i,i+1\right]  $. Gluing these products together and plugging the hole with
$N_{0}$ gives a homeomorphism to $%
\mathbb{R}
^{n}$.
\end{proof}

\begin{corollary}
No triangulation of the Whitehead contractible 3-manifold is simplicially
collapsible. More generally, the only open manifolds of dimension $\leq4$ that
admit simplicially collapsible triangulations are those PL homeomorphic to $%
\mathbb{R}
^{n}$.
\end{corollary}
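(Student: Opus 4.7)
The plan is to derive the corollary directly from Theorem \ref{Theorem: collapsible implies R^n} together with the dimension-restricted triangulation facts already recalled in the paragraph preceding that theorem. The key observation is that Theorem \ref{Theorem: collapsible implies R^n} only concerns \emph{combinatorial} $n$-manifolds, so to apply it to an arbitrary triangulation $h:|K| \to M^n$ of an open topological manifold, I must first know that $K$ is automatically a combinatorial $n$-manifold. This is where the hypothesis $n \le 4$ enters: the paper has already noted that non-PL triangulations do not exist in dimensions $\le 4$. Therefore any triangulation of an open topological $n$-manifold with $n\le 4$ is a PL triangulation, meaning $K$ is a combinatorial $n$-manifold.

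With that in hand, the argument is immediate. Suppose $M^n$ is an open manifold of dimension $n \le 4$ and $h:|K| \to M^n$ is a simplicially collapsible triangulation. By the above, $K$ is a combinatorial $n$-manifold, so Theorem \ref{Theorem: collapsible implies R^n} applies and $|K|$ is PL homeomorphic to $\mathbb{R}^n$; composing with $h$ shows $M^n$ is homeomorphic (indeed PL homeomorphic, via the triangulation) to $\mathbb{R}^n$. This proves the general assertion.

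For the Whitehead manifold $\mathrm{Wh}^3$, the only additional ingredient needed is the classical fact that $\mathrm{Wh}^3$ is not homeomorphic to $\mathbb{R}^3$ (it is contractible but not simply connected at infinity, so its end is distinct from that of $\mathbb{R}^3$). If some triangulation of $\mathrm{Wh}^3$ were simplicially collapsible, the general statement just established would force $\mathrm{Wh}^3 \cong \mathbb{R}^3$, a contradiction. The main (and only) obstacle in this proposal is the appeal to the deep low-dimensional triangulation theorem, but that is invoked in the excerpt itself and requires no new work here; everything else is a one-line deduction from Theorem \ref{Theorem: collapsible implies R^n}.
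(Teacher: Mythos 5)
Your proposal is correct and follows the same route as the paper: the paper's proof likewise invokes the fact that every triangulation of a manifold of dimension $\leq 4$ is PL and then applies Theorem \ref{Theorem: collapsible implies R^n}. Your additional explicit remark that the Whitehead manifold is not homeomorphic to $\mathbb{R}^3$ (being non-simply connected at infinity) is the standard fact the paper leaves implicit.
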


\begin{proof}
Every triangulation of a manifold of dimension $\leq4$ is PL, so the assertion
follows from Theorem \ref{Theorem: collapsible implies R^n}.
\end{proof}

If we allow non-PL triangulations, a different picture emerges, even for the
simplest class of contractible open manifolds.

\begin{theorem}
\label{Theorem: interiors of contractible manifolds are collapsible}If $C^{n}$
is a compact contractible PL\ $n$-manifold with $n\geq5$, then
$\operatorname*{int}C^{n}$ admits a simplicially collapsible non-PL triangulation.
\end{theorem}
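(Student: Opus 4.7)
The proof rests on the classical fact that for $n \geq 5$, the product $C^n \times [0,1]$ is PL homeomorphic to the ball $B^{n+1}$. Indeed, the double $DC^n = \partial(C^n \times [0,1])$ is a simply connected PL homology $n$-sphere, hence PL homeomorphic to $S^n$ by the PL Poincar\'e theorem; and $C^n \times [0,1]$ is then a compact contractible PL $(n+1)$-manifold bounded by a PL $n$-sphere, so is a PL $(n+1)$-ball by the PL h-cobordism theorem (applicable since $n+1 \geq 6$).

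The plan is to build $\operatorname{int} C^n$ as an infinite simplicial expansion from a vertex. I would first exhaust $\operatorname{int} C^n$ by a nested sequence $K_0 \subset K_1 \subset K_2 \subset \cdots$ of compact PL submanifolds with $K_i \cong C^n$ and each difference $K_{i+1} \setminus \operatorname{int} K_i$ a PL collar $\partial C^n \times [0,1]$. By Example \ref{Example: simplicially collapsing a product}, each such collar carries a PL triangulation that simplicially collapses to its inner boundary, so with matching boundary triangulations each transition $K_i \nearrow K_{i+1}$ can be realized by finitely many elementary simplicial expansions.

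The heart of the proof is to equip $K_0 \cong C^n$ with a simplicially collapsible triangulation. Such a triangulation must be non-PL, since by Theorem \ref{Theorem: collapsible implies R^n} the only PL manifolds admitting simplicially collapsible PL triangulations are those PL homeomorphic to $\mathbb{R}^n$, and $C^n$ is compact. My proposed construction leverages the PL ball structure $C^n \times [0,1] \cong_{PL} B^{n+1}$: choose a simplicially collapsible PL triangulation $T$ of $B^{n+1}$ whose collapse culminates at a vertex $v$ placed in the interior of a pushed-in copy of $C^n$. Dually, $T$ is assembled from $v$ by elementary simplicial expansions. By carefully selecting which expansions to retain, I aim to carve out a subcomplex $L_0 \subset T$ with underlying space homeomorphic to a copy of $C^n$ and satisfying $L_0 \searrow \{v\}$ simplicially. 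The non-PL character of $L_0$ enters through the link of $v$, which is forced to be a copy of $\partial C^n$—a homology sphere but not a PL $(n-1)$-sphere in the cases of interest, such as contractible manifolds with non-simply-connected boundary.

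The main obstacle is carrying out this carving step rigorously, i.e.\ proving that a suitable sub-expansion of $T$ has underlying space homeomorphic to $C^n$ while retaining simplicial collapsibility. This requires both an engulfing argument inside the PL ball $B^{n+1}$ and Edwards's theorems on the existence of non-PL triangulations in dimension $\geq 5$, which supply the non-spherical links needed to realize $\partial C^n$ as a link in a topological $n$-manifold. Once $L_0$ is in place, compatibly extending the triangulation over each $K_i$ via the collar expansions assembles an infinite simplicial expansion $\{v\} = L_0 \overset{e}{\swarrow} L_1 \overset{e}{\swarrow} L_2 \overset{e}{\swarrow} \cdots$ whose union is the desired simplicially collapsible non-PL triangulation of $\operatorname{int} C^n$.
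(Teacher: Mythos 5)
Your reduction to the compact case is exactly the paper's: decompose $\operatorname*{int}C^{n}$ as $C^{n}$ with an infinite exterior collar $\partial C^{n}\times\lbrack0,\infty)$, triangulate each block $\partial C^{n}\times\left[  i,i+1\right]  $ as in Example \ref{Example: simplicially collapsing a product} so that it collapses to its inner boundary, and thereby reduce everything to producing one simplicially collapsible (necessarily non-PL, by Theorem \ref{Theorem: collapsible implies R^n}) triangulation of $C^{n}$ containing a subcomplex triangulating $\partial C^{n}$. The gap is precisely in that central step, which you yourself flag as ``the main obstacle'' and do not carry out. Worse, the mechanism you sketch for it cannot work as stated: you assert that the link of the terminal vertex $v$ in $L_{0}$ is ``forced to be a copy of $\partial C^{n}$.'' If $\pi_{1}(\partial C^{n})\neq1$ (the only interesting case, since otherwise $C^{n}$ is a ball), then the star of $v$ would be a cone on a non-simply-connected space, and small deleted neighborhoods of $v$ in $\left\vert L_{0}\right\vert $ would have nontrivial fundamental group --- so $\left\vert L_{0}\right\vert $ would fail to be a manifold at $v$, contradicting $\left\vert L_{0}\right\vert \approx C^{n}$. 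In a (possibly non-PL) triangulation of an $n$-manifold with $n\geq5$, vertex links are simply connected generalized homology spheres; non-PL-ness enters because such a link need not be a manifold, not because it can be $\partial C^{n}$. Beyond this, the ``carving'' of a collapsible subcomplex homeomorphic to $C^{n}$ out of a collapsible PL triangulation of $B^{n+1}\cong_{PL}C^{n}\times\left[  0,1\right]  $ is asserted without any mechanism; engulfing produces ambient isotopies, not subcomplexes, and there is no reason a given collapsible triangulation of the ball should contain such a subcomplex.

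What the paper does instead is import an explicit construction: Lemma \ref{Lemma: compact contractible manifolds}(\ref{Assertion 2 of compact contractible manifolds}), following \cite{ADG97}, triangulates $C^{n}$ as $T=(\left\{  p_{1}\right\}  \ast K_{1})\cup_{\left\{  q\right\}  \ast J}(\left\{  p_{2}\right\}  \ast K_{2})$, a union of two finite simplicial cones glued along a common conical subcomplex. Collapsibility is then elementary: a finite simplicial cone collapses to the subcone over any subcomplex, so $T\searrow\left\{  p_{1},p_{2}\right\}  \ast\left(  \left\{  q\right\}  \ast J\right)  =\left\{  q\right\}  \ast\left(  \left\{  p_{1},p_{2}\right\}  \ast J\right)  \searrow\left\{  q\right\}  $. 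The hypothesis that $C^{n}$ is a PL manifold is used only to guarantee that $\partial C^{n}$ admits a PL triangulation, which that construction requires. If you want to complete your argument, you should replace the carving step with this (or an equivalent) explicit collapsible non-PL triangulation of $C^{n}$; the rest of your outline then matches the paper.
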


We begin this proof with a lemma.

\begin{lemma}
\label{Lemma: compact contractible manifolds}Let $C^{n}$ be a compact
contractible $n$-manifold that is not homeomorphic to a ball. Then

\begin{enumerate}
\item \label{Assertion 1 of compact contractible manifolds}there does not
exist a PL triangulation of $C^{n}$ that is simplicially collapsible,

\item \label{Assertion 2 of compact contractible manifolds}there exists a
non-PL triangulation of $C^{n}$ that is simplicially collapsible if and only
if $n\geq5$ and $\partial C^{n}$ admits a PL triangulation. (The latter
condition holds whenever $n\geq6$.)

\item \label{Assertion 3 of compact contractible manifolds}$C^{n}$ is always
compactly collapsible.
\end{enumerate}
\end{lemma}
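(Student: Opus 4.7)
The three assertions call for distinct arguments. My plan is to dispose of (\ref{Assertion 1 of compact contractible manifolds}) and (\ref{Assertion 3 of compact contractible manifolds}) quickly, using results developed earlier in the paper, and then to concentrate the main work on (\ref{Assertion 2 of compact contractible manifolds}).

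For (\ref{Assertion 1 of compact contractible manifolds}), I would invoke the classical compact case \cite[Cor.3.28]{RoSa82}: every finite simplicially collapsible combinatorial $n$-manifold is PL homeomorphic to $B^n$. Since $C^n$ is compact, any PL triangulation of $C^n$ has finitely many simplices; a simplicially collapsible such triangulation would force $C^n$ to be a PL $n$-ball, hence homeomorphic to $B^n$---contradicting the hypothesis. For (\ref{Assertion 3 of compact contractible manifolds}), I would use that $C^n$ is a contractible compactum, yielding a basepoint-preserving contraction to some $p \in C^n$; by Example \ref{Example: contractions that are track-faithful} this contraction is the culmination of a track-faithful deformation retraction, and by Remark \ref{Remark: on contractible spaces} it realizes the compact collapse $\{p\} \nearrow C^n$ in a single move.

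For the forward direction of (\ref{Assertion 2 of compact contractible manifolds}), necessity of $n \geq 5$ is immediate, since triangulations in dimension $\leq 4$ are automatically PL (as noted in the discussion preceding the lemma). To prove that $\partial C^n$ admits a PL triangulation, I would apply Kirby--Siebenmann obstruction theory to the closed topological $(n-1)$-manifold $\partial C^n$; the obstruction lies in $H^4(\partial C^n;\mathbb{Z}/2)$. Since $C^n$ is contractible, Lefschetz duality shows $\partial C^n$ is a homology $(n-1)$-sphere, so $H^4(\partial C^n;\mathbb{Z}/2) = 0$ whenever $n - 1 \neq 4$. For $n \geq 6$ this yields the parenthetical strengthening (PL-triangulability of $\partial C^n$ is automatic), while for $n = 5$ one must appeal to the non-PL collapsible triangulation itself---extracting, via a regular neighborhood argument along the collapse sequence, a compatible PL structure on $\partial C^5$.

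The hard part, and the main obstacle of the proof, will be the reverse direction of (\ref{Assertion 2 of compact contractible manifolds}): given $n \geq 5$ and a PL triangulation $T$ of $\partial C^n$, constructing a simplicially collapsible triangulation of $C^n$, which by (\ref{Assertion 1 of compact contractible manifolds}) must be non-PL. My strategy is to exploit Edwards's double-suspension theorem, which for $n \geq 5$ supplies non-PL simplicial triangulations whose combinatorially exotic links enable collapses impossible under a PL structure. Concretely, I would start with a PL triangulation of $C^n$ extending $T$, reduce it via PL collapses to a spine, and then replace a regular neighborhood of the spine by a non-PL triangulation modeled on Edwards's construction, designed so that the resulting complex admits a sequence of elementary collapses proceeding from $\partial C^n$ inward and culminating at a single vertex. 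The technical crux will be verifying that the non-PL flexibility actually circumvents the obstruction (captured in the PL case by Whitehead torsion of $\partial C^n$) that blocks full simplicial collapsibility of every PL triangulation of $C^n$; this step will rely on Edwards's cell-like decomposition theorem and classical PL regular neighborhood theory.
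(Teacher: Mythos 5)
Your treatments of parts (\ref{Assertion 1 of compact contractible manifolds}) and (\ref{Assertion 3 of compact contractible manifolds}) are fine. For (\ref{Assertion 1 of compact contractible manifolds}) you quote \cite[Cor.3.28]{RoSa82} directly, whereas the paper reproves essentially that fact by taking a regular neighborhood of $C^n$ in the double $DC^n$ and noting that a regular neighborhood of a compact collapsible polyhedron is a PL ball; the two routes are equivalent (note only that compactness of $C^n$ forces any simplicial collapse of a PL triangulation to be a finite one, so the classical corollary applies despite the paper's broadened definition). Part (\ref{Assertion 3 of compact contractible manifolds}) is exactly the paper's argument: a point-fixing contraction is track-faithful and hence a single compact collapse.

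The genuine gap is in the existence half of (\ref{Assertion 2 of compact contractible manifolds}). What you offer there is a plan, not a construction: ``replace a regular neighborhood of the spine by a non-PL triangulation modeled on Edwards's construction, designed so that the resulting complex admits a sequence of elementary collapses'' specifies no simplicial complex and no collapsing order, and there is no indication of why such a replacement should exist or be collapsible. Moreover, the obstruction you propose to circumvent is misidentified: since $C^n$ is contractible (and $\partial C^n$ is a homology sphere), all relevant Whitehead torsions vanish, so torsion is not what blocks collapsibility of PL triangulations --- part (\ref{Assertion 1 of compact contractible manifolds}) itself (regular neighborhood theory) is the obstruction. The paper instead cites a concrete construction from \cite{ADG97}: for $n\geq 5$ with $\partial C^n$ PL-triangulable, $C^n$ can be triangulated as $T=(\{p_1\}\ast K_1)\cup_{\{q\}\ast J}(\{p_2\}\ast K_2)$, a union of two finite simplicial cones glued along a common subcone, and collapsibility then follows from the elementary fact that a finite cone collapses to the subcone over any subcomplex: collapse $T$ to $\{p_1,p_2\}\ast(\{q\}\ast J)=\{q\}\ast(\{p_1,p_2\}\ast J)$ and then to $q$. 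Without that input (or an equally explicit substitute), the ``if'' direction is unproved. A smaller issue: in the forward direction your Kirby--Siebenmann argument correctly handles $n\geq 6$ (which is all the parenthetical remark claims), but your proposed extraction of a PL structure on $\partial C^5$ ``along the collapse sequence'' is likewise unsupported --- the restriction of a non-combinatorial triangulation of $C^5$ to the boundary need not be a combinatorial triangulation, so this step would need a real argument.
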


\begin{proof}
Assertion \ref{Assertion 1 of compact contractible manifolds} is a consequence
of regular neighborhood theory. Suppose such a triangulation exists, and let
$DC^{n}$ be the double of $C^{n}$ along its boundary. Then, a regular
neighborhood of $C^{n}$ in $DC^{n}$ is PL homeomorphic to $C^{n}$, but a
regular neighborhood of a compact collapsible polyhedron is always\ a PL ball.

Assertion \ref{Assertion 2 of compact contractible manifolds} relies on a
construction described in \cite{ADG97} which triangulates $C^{n}$ with a
simplicial complex of the form%
\[
T=(\left\{  p_{1}\right\}  \ast K_{1})\cup_{\left\{  q\right\}  \ast
J}(\left\{  p_{2}\right\}  \ast K_{2})
\]
(the union of two simplicial cones on finite simplicial complexes $K_{1}$ and
$K_{2}$ along a common conical subcomplex $\left\{  q\right\}  \ast J$ of
$K_{1}$ and $K_{2}$). Since every finite simplicial cone collapses to the
subcone over any of its subcomplexes, we can collapse $T$ to the subcomplex
$\left\{  p_{1},p_{2}\right\}  \ast\left(  \left\{  q\right\}  \ast J\right)
$ which, by associativity of simplicial joins is the cone $\left\{  q\right\}
\ast(\left\{  p_{1},p_{2}\right\}  \ast J)$. From there we can collapse to $q$.

For the parenthetic remark, note that the boundary of a compact contractible
$n$-manifold has the homology of an $\left(  n-1\right)  $-sphere. The
Kirby-Siebenmann obstruction \cite{KiSi77} to a PL triangulation of $\partial
C^{n}$ lies in $H^{4}\left(  \partial C^{n};%
\mathbb{Z}
_{2}\right)  $, and therefore vanishes when $n\geq6$.

For assertion \ref{Assertion 3 of compact contractible manifolds} choose any
contraction of $C^{n}$ that fixes a point. That contraction is a compact collapse.
\end{proof}

\begin{proof}
[Proof of Theorem
\ref{Theorem: interiors of contractible manifolds are collapsible}]If we
attach an exterior open collar $\partial C^{n}\times\lbrack0,\infty)$ to
$C^{n}$, the resulting space is homeomorphic to $\operatorname*{int}C^{n}$.
Thus,
\begin{equation}
\operatorname*{int}C^{n}\approx C^{n}\cup(\partial C^{n}\times\left[
0,1\right]  )\cup(\partial C^{n}\times\left[  1,2\right]  )\cup(\partial
C^{n}\times\left[  2,3\right]  )\cup\cdots
\label{Item: collar decomposition of C^n}%
\end{equation}
By Lemma \ref{Lemma: compact contractible manifolds} we can choose a
simplicially collapsible non-PL triangulation $\left\vert K\right\vert $ of
$C^{n}$ with a subcomplex $L\leq K$ that triangulates $\partial C^{n}$. A
sequence of applications of Example
\ref{Example: examples of compact collapses} applied to $\left\vert
L\right\vert \times\left[  i,i+1\right]  $ produces triangulations of
$\partial C^{n}\times\left[  i,i+1\right]  $ which agree on their overlap and
collapse to their left-hand boundaries. Assembling these produces the desired triangulation.
\end{proof}

If we allow arbitrary compact collapses, a still different picture emerges.

\begin{theorem}
The interior of every compact contractible $n$-manifold $C^{n}$ admits a
compact collapse.
\end{theorem}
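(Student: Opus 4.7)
The plan is to realize $\operatorname{int} C^{n}$ as a countable increasing union of compacta, each inclusion being a compact expansion, starting from a single point. Concretely, I would first invoke Brown's collaring theorem to identify $\operatorname{int} C^{n}$ with
\[
X \;:=\; C^{n}\cup_{\partial C^{n}}\bigl(\partial C^{n}\times[0,\infty)\bigr),
\]
so that each external collar slab $\partial C^{n}\times[k-1,k]$ is attached along $\partial C^{n}\times\{k-1\}$.

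Fix a point $p\in C^{n}$ and define
\[
L_{0}=\{p\},\qquad L_{1}=C^{n},\qquad L_{k}=C^{n}\cup\bigl(\partial C^{n}\times[0,k-1]\bigr)\ \text{for } k\ge 2.
\]
Clearly $L_{k}\subseteq L_{k+1}$ and $\bigcup_{k\ge 0}L_{k}=X\approx\operatorname{int} C^{n}$. It remains to verify that each $L_{k+1}\searrow L_{k}$ is a compact collapse. For $k=0$, the key observation is that $C^{n}$, being a compact contractible ANR, is an AR, so it admits a deformation retraction to $\{p\}$. Since the endpoint $p$ is fixed throughout, this is track-faithful by Example~\ref{Example: contractions that are track-faithful} (the same fact that underlies Lemma~\ref{Lemma: compact contractible manifolds}, assertion~\ref{Assertion 3 of compact contractible manifolds}), and $C^{n}\smallsetminus\{p\}$ has compact closure in $C^{n}$, so the culmination $C^{n}\to\{p\}$ is a compact collapse.

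For $k\ge 1$, apply Example~\ref{Example: product collapse} to obtain a compact collapse of the single collar slab $\partial C^{n}\times[k-1,k]$ onto its left-hand boundary $\partial C^{n}\times\{k-1\}$; then invoke Lemma~\ref{Lemma: extending a topological collapse}, with $Y=L_{k}$, $C=\partial C^{n}\times[k-1,k]$, and $A=\partial C^{n}\times\{k-1\}\subseteq L_{k}$, to extend this to a compact collapse $L_{k+1}\searrow L_{k}$. Since the added piece is compact, the complement $L_{k+1}\smallsetminus L_{k}$ has compact closure, satisfying condition~\textbf{b)} of the definition.

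Assembling these expansions yields $\{p\}=L_{0}\swarrow L_{1}\swarrow L_{2}\swarrow\cdots$ with $\bigcup L_{k}=\operatorname{int} C^{n}$, so $\{p\}\nearrow\operatorname{int} C^{n}$ and $\operatorname{int} C^{n}$ is compactly collapsible. There is no serious obstacle here: the machinery of Section~\ref{Section: Topological expansions and collapses} was designed precisely to handle situations like this. The mildest subtlety is the one-time step of contracting the whole of $C^{n}$ to a point in a track-faithful manner, which the AR property supplies automatically; every subsequent move is a textbook product collapse grafted on by the pasting lemma.
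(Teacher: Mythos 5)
Your proposal is correct and follows essentially the same route as the paper: the exterior-collar decomposition of $\operatorname{int}C^{n}$, a point-fixing contraction of $C^{n}$ serving as the initial compact collapse (the paper records this as assertion (3) of Lemma \ref{Lemma: compact contractible manifolds}), and product collapses of the collar slabs $\partial C^{n}\times[k-1,k]$ onto their left-hand boundaries, grafted on via Lemma \ref{Lemma: extending a topological collapse}. Your write-up is merely more explicit about the filtration and the verification of condition b), and it cites Example \ref{Example: product collapse} where the paper (apparently by a slip) points to Example \ref{Example: elementary simplicial collapses}.
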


\begin{proof}
Begin with the decomposition of $\operatorname*{int}C^{n}$ provided in
(\ref{Item: collar decomposition of C^n}). Since each $\partial C^{n}%
\times\left[  i,i+1\right]  $ compactly collapses to $\partial C^{n}%
\times\left\{  i\right\}  $ (Example
\ref{Example: elementary simplicial collapses}) and $C^{n}$ compactly
collapses to a point, the theorem follows.
\end{proof}

It is not the case that \emph{every} contractible open manifold is compactly collapsible.

\begin{theorem}
The Whitehead contractible 3-manifold $W^{3}$ is not compactly collapsible.
\end{theorem}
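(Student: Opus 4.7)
The plan is to show that compact collapsibility of $W^3$ would force it to admit a $\mathcal{Z}$-compactification by a compact AR, which in turn would force $W^3$ to be simply connected at infinity---contradicting the defining property that distinguishes the Whitehead manifold from $\mathbb{R}^3$.

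Assuming for contradiction that $\{p\} = L_0 \swarrow L_1 \swarrow L_2 \swarrow \cdots$ is a sequence of compact expansions through subsets of $W^3$ with $\bigcup_i L_i = W^3$, I would first check (or arrange by slight thickenings of each $L_i$ to a regular neighborhood) that the insulation hypothesis of Theorem~\ref{Theorem: TFAE with T_I = T_D} is satisfied, so that the topology $\mathcal{T}_I$ matches the manifold topology on $W^3$. Parts 1 and 4 of Theorem~\ref{Theorem: homotopy negligible compactifications} then yield a $\mathcal{Z}$-compactification $\overline{W^3} = W^3 \sqcup R$, and since $W^3$ is a contractible open 3-manifold and hence an AR, Corollary~\ref{Corollary: compactifications of ANRs} ensures that $\overline{W^3}$ is a compact AR.

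I would next argue that such a compactification forces $W^3$ to be simply connected at infinity. Given compact $K \subset W^3$, pick an open $U \supset R$ in $\overline{W^3}$ disjoint from $K$; using local contractibility of the compact ANR $\overline{W^3}$ together with compactness of $R$, find a smaller open $V$ with $R \subset V \subset U$ such that every loop in $V$ is null-homotopic in $U$. Proposition~\ref{Proposition: equivalent to being a Z-set}(3) then makes $V \setminus R \hookrightarrow V$ and $U \setminus R \hookrightarrow U$ homotopy equivalences, so the null-homotopy can be pushed off of $R$ into $U \cap W^3 \subset W^3 \setminus K$. Choosing a compact $K' \supset W^3 \setminus V$, every loop in $W^3 \setminus K'$ bounds in $W^3 \setminus K$, which is simple connectedness at infinity. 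But the Whitehead manifold is the archetypal contractible open 3-manifold that fails to be simply connected at infinity, giving the contradiction. The main obstacle will be upgrading local contractibility at individual points of $R$ to the uniform statement producing the pair $V \subset U$; this is a standard but delicate ANR-theoretic maneuver requiring a finite van Kampen analysis on a contractible refinement of a cover of $R$, combined with careful bookkeeping when transferring the null-homotopy from $\overline{W^3}$ back to $W^3$ via the $\mathcal{Z}$-set property.
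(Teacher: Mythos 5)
Your first paragraph is essentially the paper's opening move: compact collapsibility would yield, via Theorem \ref{Theorem: homotopy negligible compactifications}, a $\mathcal{Z}$-compactification $\overline{W^{3}}=W^{3}\sqcup R$ with $\overline{W^{3}}$ a compact AR. The second half of your argument, however, contains a genuine and fatal gap: the claim that such a compactification forces $W^{3}$ to be simply connected at infinity is false as a general implication. The closed $2$-disk is a compact AR containing its boundary circle as a $\mathcal{Z}$-set, so $\mathbb{R}^{2}$ admits exactly the kind of compactification you describe, yet $\mathbb{R}^{2}$ is not simply connected at infinity. The step that breaks is precisely the one you flag as ``delicate'': producing an open $V$ with $R\subseteq V\subseteq U$ such that every loop in $V$ is null-homotopic in $U$. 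Local contractibility of the compact ANR $\overline{W^{3}}$ controls only small loops near individual points of $R$; a loop in $V$ that travels along $R$ (like a loop parallel to the boundary circle in a thin annular neighborhood of it) need not contract in any proper neighborhood $U$ of $R$, and no finite van Kampen patching of local contractions recovers such a global null-homotopy unless $R$ has trivial shape, which you have no way to guarantee. In fact, by the very translation you perform in your last step (using Proposition \ref{Proposition: equivalent to being a Z-set} to push homotopies off of $R$), the existence of such a pair $V\subseteq U$ for every neighborhood $U$ of $R$ is \emph{equivalent} to the simple connectivity at infinity you are trying to prove, so the argument is circular at exactly that point.

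The paper avoids this by extracting a much weaker---and actually valid---consequence of $\mathcal{Z}$-compactifiability: semistability of the fundamental group at infinity, citing \cite{GuTi03}, and then invoking the fact that $W^{3}$ fails even semistability, citing \cite{Geo08}. To repair your argument, replace ``simply connected at infinity'' by ``semistable at infinity'' throughout and supply or cite proofs of both halves; the stronger property you aimed for cannot be deduced from the existence of the compactification alone.
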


\begin{proof}
By Theorem \ref{Theorem: homotopy negligible compactifications}, if $W^{3}$
were compactly collapsible, it would be $\mathcal{Z}$-compactifiable. Then, by
\cite[Th.1.2]{GuTi03}, $W^{3}$ would be semistable at infinity, contradicting
\cite[Prop.16.4.2]{Geo08}.
\end{proof}

\begin{remark}
The Whitehead manifold is by no means unique. In all dimensions $\geq3$ there
exist contractible open manifolds that are not semistable; hence they are not
$\mathcal{Z}$-compactifiable; hence they are not compactly collapsible. (Note.
For non-manifolds, compact collapsibility does not imply semistability.)
\end{remark}

Another interesting collection of contractible open $n$-manifolds are the
exotic universal covers of closed aspherical $n$-manifolds constructed by
Davis in \cite{Dav83}. We will take a closer look at the collapsibility
properties of those manifolds in \cite{AGS23}. For now, we mention one
particularly nice class.

\begin{theorem}
Let $K$ be a flag simplicial complex for which $\left\vert K\right\vert $ is a
closed $\left(  n-1\right)  $-manifold with the same homology as $S^{n-1}$.
Then the right-angled Coxeter group $W_{K}$ with nerve $K$ acts geometrically
on a CAT(0) cube complex $X$ whose underlying space is a contractible open
$n$-manifold that is not homeomorphic to $%
\mathbb{R}
^{n}$. The space $X$ is cubically collapsible.
\end{theorem}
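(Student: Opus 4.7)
The plan is to invoke the classical construction of Davis \cite{Dav83}. Given the right-angled Coxeter group $W_K$ with nerve a flag simplicial complex $K$, the Davis complex $\Sigma_K$ carries a natural cube complex structure on which $W_K$ acts properly discontinuously and cocompactly by isometries; the flag condition on $K$ is precisely Gromov's link condition for $\Sigma_K$ to be nonpositively curved, and since $\Sigma_K$ is simply connected (it deformation retracts onto a fundamental chamber), this upgrades to CAT(0). In particular, $\Sigma_K$ is a locally finite CAT(0) cube complex on which $W_K$ acts geometrically, with every vertex link isomorphic to $K$.

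To see that $|\Sigma_K|$ is an $n$-manifold, I would verify the link condition: the link of a $k$-cube in $\Sigma_K$ is the join of a sphere with a subcomplex of $|K|$, and the hypothesis that $|K|$ is a closed $(n-1)$-manifold with the homology of $S^{n-1}$ translates, via Davis's analysis, into $\Sigma_K$ being a homology $n$-manifold. The homology-sphere hypothesis then feeds into a manifold-recognition principle (ultimately leveraging the double suspension theorem in codimension $\geq 2$) to promote $\Sigma_K$ to a genuine topological $n$-manifold. Contractibility is automatic from the CAT(0) property. This manifold recognition step is the main obstacle; rather than rederive it, I would cite the relevant theorems in \cite{Dav83} directly.

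For the non-homeomorphism to $\mathbb{R}^n$, the crucial invariant is the fundamental group at infinity. Davis's theorem shows that $\Sigma_K$ fails to be simply connected at infinity whenever $|K|$ is not simply connected (e.g., whenever $|K|$ is a Poincar\'e-type homology sphere); since $\mathbb{R}^n$ is simply connected at infinity for $n \geq 2$, this obstructs any homeomorphism. Implicit in the theorem statement is that $K$ is chosen so that this obstruction is nontrivial; otherwise, when $|K|$ is PL homeomorphic to $S^{n-1}$, one does recover $\Sigma_K \approx \mathbb{R}^n$.

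Finally, cubical collapsibility is immediate from Theorem \ref{Theorem: cubical Z-compactification}: since $\Sigma_K$ is a locally finite CAT(0) cube complex, it collapses via a sequence of elementary cubical collapses, and a corresponding $\mathcal{Z}$-compactification $\overline{\Sigma_K} = \Sigma_K \sqcup \partial_{\square}\Sigma_K$ is obtained at no extra cost.
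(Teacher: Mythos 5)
There is a genuine gap at the manifold-recognition step. You take $X=\Sigma_{K}$, the cubical Davis complex itself, and propose to upgrade it from a homology $n$-manifold to a topological $n$-manifold by a recognition principle ``ultimately leveraging the double suspension theorem.'' This cannot work: in the cubical structure on $\Sigma_{K}$ the link of \emph{every vertex} is a copy of $K$ itself, so a small deleted neighborhood of a vertex is homotopy equivalent to $\left\vert K\right\vert $. In the only interesting case --- $\left\vert K\right\vert $ a homology $\left(n-1\right)$-sphere with $\pi_{1}\left(\left\vert K\right\vert\right)\neq1$, which (as you correctly observe) is implicitly required for the non-homeomorphism to $\mathbb{R}^{n}$ --- this deleted neighborhood is not simply connected, whereas $\mathbb{R}^{n}-\left\{0\right\}$ is simply connected for $n\geq3$. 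Hence the vertices are not manifold points and $\Sigma_{K}$ is \emph{not} a topological manifold. Edwards' polyhedral manifold recognition theorem requires simply connected vertex links, and the double suspension theorem is of no use at a vertex, where the link is $K$ rather than a suspension of anything. The paper is explicit on exactly this point: its proof takes $X$ to be the space constructed in \cite{ADG97}, described as ``a variation on the Davis complex $\Sigma_{K}$ (which is not a manifold).'' Roughly, the Davis-type reflection construction must be run with a chamber that is a compact contractible $n$-manifold bounded by $\left\vert K\right\vert $ rather than the cone on $K$, and the content of \cite{ADG97} is that this can be done so that the result is still a CAT(0) cube complex with a geometric $W_{K}$-action.

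The remaining ingredients of your argument are fine and agree with the paper: Gromov's flag/link criterion plus simple connectivity gives the CAT(0) property, the fundamental group at infinity (nontrivial when $\pi_{1}\left(\left\vert K\right\vert\right)\neq1$) obstructs a homeomorphism with $\mathbb{R}^{n}$, and cubical collapsibility follows from Theorem \ref{Theorem: cubical Z-compactification} (i.e., from \cite{Gul23}) applied to the locally finite CAT(0) cube complex $X$ --- but all of this must be applied to the \cite{ADG97} complex, not to $\Sigma_{K}$.
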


\begin{proof}
The space $X$ is constructed in \cite{ADG97}. It is a variation on the Davis
complex $\Sigma_{K}$ (which is not a manifold). By \cite{Gul23}, $X$ is
cubically collapsible.
\end{proof}

\subsection{Applications to 1-dimensional spaces}

\begin{theorem}
\label{Theorem: Z-compactifying finite valence trees}Every finite valence tree
$T$ is simplicially collapsible and admits a $\mathcal{Z}$-compactification
$\overline{T}=T\sqcup R$ where $\overline{T}$ is an AR and $R$ is homeomorphic
to the space of ends of $T$.
\end{theorem}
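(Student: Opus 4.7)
The plan is to verify collapsibility directly by an edge-by-edge enumeration, invoke the compactification machinery of Section~\ref{Section: Compactifications}, and then unpack the inverse limit to identify the remainder. Beginning with the collapsibility claim, I would fix a basepoint $v_{0}\in T$ and enumerate the edges $e_{1},e_{2},\ldots$ of $T$ by breadth-first search from $v_{0}$. Setting $L_{0}=\{v_{0}\}$ and $L_{i}=L_{i-1}\cup e_{i}$, the BFS ordering guarantees that exactly one endpoint $a_{i}$ of $e_{i}$ lies in $L_{i-1}$ while the other endpoint $b_{i}$ is new, so $b_{i}$ is a free face of $e_{i}$ in $L_{i}$, which makes each $L_{i-1}\overset{e}{\swarrow}L_{i}$ an elementary combinatorial expansion. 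Local finiteness together with connectedness of $T$ force $\bigcup_{i\ge 0}L_{i}=T$, so $\{v_{0}\}\overset{\infty}{\nearrow}T$.

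Since $T$ is then a locally finite simplicial complex collapsing to the (trivially contractible) finite subcomplex $\{v_{0}\}$, Corollary~\ref{Corollary: Z-compatifications of simplicial complexes} directly produces a $\mathcal{Z}$-compactification $\overline{T}=T\sqcup R$ with $\overline{T}$ an AR. It remains to identify $R$ with the space of ends $\mathrm{Ends}(T)$.

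For this last step, which I expect to be the main obstacle, I would switch to the cofinal ball filtration $C_{j}=B_{d}[v_{0},j]$, where $d$ is the path-length metric and $r_{j}:C_{j}\to C_{j-1}$ is the nearest-point retraction. Each $C_{j}$ is obtained from $C_{j-1}$ by attaching finitely many pendant subtrees, so $r_{j}$ is a compact collapse by Lemma~\ref{Lemma: combining finitely many compact collapses}; and because $\{L_{i}\}$ and $\{C_{j}\}$ are mutually cofinal sequences of finite subtrees with compatible nearest-point retraction bonding maps, they determine the same inverse-limit compactification. Because edges of $T$ have length $1$, the sphere $S_{d}[v_{0},j-1]$ consists entirely of vertices, and $r_{j}^{-1}(y)$ is a singleton unless $y$ lies on that sphere. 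Hence every point of $R$ has the form $(v_{0},u_{1},u_{2},\ldots)$ with $u_{k}$ a vertex at distance $k$ from $v_{0}$ and $u_{k+1}$ adjacent to $u_{k}$ --- precisely the data of a geodesic ray from $v_{0}$, hence of an end of $T$. The homeomorphism check is then the observation that a basic neighborhood $\pi_{j}^{-1}(\mathrm{star}(u_{j}))\cap R$ of an end in $R$ corresponds to the set of ends whose geodesic ray passes through $u_{j}$, which is the standard cone basis on $\mathrm{Ends}(T)$. The two items requiring real care --- compatibility of the two filtrations and the topological match --- are bookkeeping exercises, but this is where the bulk of the work actually lies.
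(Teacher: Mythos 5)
Your proposal is correct and follows essentially the same route as the paper: the paper also uses the metric balls $K_i=B_d[v_0,i]$ as the filtration (your BFS enumeration is just an edge-by-edge refinement of the same collapse), applies Corollary~\ref{Corollary: Z-compatifications of simplicial complexes}, and identifies $R$ with the space of ends via the inverse limit of vertex spheres (a step the paper delegates to a citation but which you carry out explicitly, correctly observing that non-eventually-constant threads must consist of vertices forming a geodesic ray).
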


\begin{proof}
Endow $T$ with the path length metric; choose a base vertex $v_{0}$; and for
each $i\in%
\mathbb{N}
$, let $K_{i}$ be the closed ball of radius $i$ centered at $v_{0}$. Then each
$K_{i}$ is a finite subtree and $\overline{K_{i+1}-K_{i}}$ is a collection of
edges, each having one end point in $K_{i-1}$. Clearly there is a simplicial
collapse $\rho_{i+1}:K_{i+1}\rightarrow K_{i}$. Now apply Corollary
\ref{Corollary: Z-compatifications of simplicial complexes}. The
correspondence between $R$ and the space of ends is almost immediate from the
definitions. See, for example \cite[\S 3.3]{Gui16}.
\end{proof}

Provided $T$ is countable, we can prove a weaker result without assuming
finite valences.

\begin{theorem}
\label{Theorem: compactifying more general trees}Every countable tree $T$ is a
union of finite subtrees $\left\{  v_{0}\right\}  =T_{0}\subseteq
T_{1}\subseteq T_{2}\subseteq\cdots$ with the property that $\overline
{T_{i+1}-T_{i}}$ is a collection of edges, each having one end point in
$T_{i-1}$; thus $T$ is simplicially collapsible. Under the topology induced by
the corresponding inverse sequence, $T_{I}$ admits a homotopy negligible
compactification to an AR $\overline{T_{I}}=T_{I}\sqcup R$.
\end{theorem}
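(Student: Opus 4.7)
The plan is to first construct the filtration $\{T_i\}$ by hand, then invoke the compactification machinery of Section \ref{Section: Compactifications}.

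First, I would root $T$ at $v_0$ and choose an enumeration $w_1, w_2, w_3, \ldots$ of the non-root vertices such that each $w_n$ appears after its parent $p(w_n)$ (for example, any breadth-first enumeration works). Let $S_n$ denote the finite subtree on $\{v_0, w_1, \ldots, w_n\}$. Since $p(w_n) \in S_{n-1}$, the tree $S_n$ is obtained from $S_{n-1}$ by attaching the single edge $e_n = \{w_n, p(w_n)\}$ along the vertex $p(w_n)$, so each inclusion $S_{n-1} \hookrightarrow S_n$ is an elementary simplicial expansion and $\bigcup S_n = T$.

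The hard part is arranging the \emph{stronger} requirement that each new edge attach within $T_{i-1}$, rather than merely within $T_i$. To handle it, I would insert a ``stall'' step between consecutive additions. Setting
\[
T_0 = T_1 = \{v_0\}, \qquad T_{2k} = T_{2k+1} = S_k \text{ for } k \geq 1,
\]
each $T_i$ is a finite subtree and $\bigcup_{i=0}^\infty T_i = T$. For $i = 2k$, the set $\overline{T_{i+1} - T_i}$ is empty, so the condition holds vacuously. For $i = 2k+1$, $\overline{T_{i+1} - T_i}$ equals the single closed edge $e_{k+1}$, whose non-free endpoint $p(w_{k+1})$ lies in $S_k = T_{2k} = T_{i-1}$, as required. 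This exhibits $T$ as simplicially collapsible in the extended sense of Section \ref{Subsection: Infinite simplicial expansions and collapses}.

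Next, I would consider the inverse sequence $T_0 \overset{\rho_1}{\longleftarrow} T_1 \overset{\rho_2}{\longleftarrow} T_2 \overset{\rho_3}{\longleftarrow} \cdots$, where each $\rho_i$ is either the identity or the elementary simplicial collapse that crushes $e_k$ to its non-free endpoint. By Example \ref{Example: elementary simplicial collapses} each such elementary simplicial collapse is a compact collapse, so Theorem \ref{Theorem: homotopy negligible compactifications} applies and delivers a homotopy negligible compactification $\overline{T_I} = T_I \sqcup R$. Since every $T_i$ is a finite polyhedron and hence a compact ANR, Corollary \ref{Corollary: more general ANR conclusion} upgrades $\overline{T_I}$ to an ANR.

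Finally, Theorem \ref{Theorem: homotopy negligible compactifications}(3) gives a deformation retraction of $T_I$ onto $T_0 = \{v_0\}$, so $T_I$ is contractible; Proposition \ref{Proposition: homotopy negligible sets} then transfers contractibility to $\overline{T_I}$, and a contractible ANR is an AR. The one genuinely non-routine step is arranging the filtration so that new edges attach within $T_{i-1}$ rather than just $T_i$; once the ``stall every other step'' trick handles that, the compactification and AR conclusions are formal consequences of the general machinery already developed.
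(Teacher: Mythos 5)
Your proposal is correct and follows essentially the same route as the paper: the paper's own proof simply leaves the existence of the filtration $\{T_i\}$ to the reader and then cites Theorem \ref{Theorem: homotopy negligible compactifications} and Corollary \ref{Corollary: more general ANR conclusion}, exactly as you do (your ``stall'' reindexing is a clean way to supply the omitted construction, and your final paragraph spells out the AR upgrade that the paper leaves implicit). One small caveat: the parenthetical claim that ``any breadth-first enumeration works'' fails when some vertex has infinite valence (BFS never finishes that level), but a parent-respecting enumeration of a countable tree is easily produced by listing the vertices arbitrarily and inserting, before each one, the not-yet-listed vertices on its path to $v_0$, so this does not affect the argument.
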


\begin{proof}
We leave it to the reader to prove the existence of the collection $\left\{
T_{i}\right\}  $. From there, the conclusions follow from Theorem
\ref{Theorem: homotopy negligible compactifications} and Corollary
\ref{Corollary: more general ANR conclusion}.
\end{proof}

\begin{remark}
Example \ref{Example: cone on a sequence 2} is an illustration of Theorem
\ref{Theorem: compactifying more general trees}. Note that $T_{I}$ is not the
usual topology on $T$. Nevertheless, this theorem provides a method for
defining and topologizing the ends of $T$. For example, the tree in Example
\ref{Example: cone on a sequence 2} has no ends.
\end{remark}

A \emph{dendrite} is a compact, connected, locally path connected metric space
$D$ that is uniquely arc-wise connected; in other words, for any pair of
distinct points $p,q\in D$, there is a unique embedded arc with end points $p$
and $q$. Dendrites are of central importance in the field of continuum theory.
They have also played an important role in the study of hyperbolic groups
\cite{Bow99}. A fundamental theorem from continuum theory provides a link to
the previous two theorems.

\begin{theorem}
[{\cite[Th.10.27]{Nad92}}]\label{Theorem: from Nadler}Every dendrite $D$ is
homeomorphic to the inverse limit of a sequence%
\[
\left\{  v_{0}\right\}  =T_{0}\overset{\rho_{1}}{\swarrow}T_{1}\overset{\rho
_{2}}{\swarrow}T_{2}\overset{\rho_{3}}{\swarrow}\cdots
\]
of finite trees where $\overline{T_{i+1}-T_{i}}$ is an arc with one end point
in $T_{i}$ and $\rho_{i+1}$ is the corresponding compact collapse.
\end{theorem}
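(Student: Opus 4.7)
The plan is to construct the inverse sequence explicitly from a well-chosen dense sequence of points in $D$, then exhibit the required homeomorphism with the inverse limit via the canonical ``first-point'' retractions that dendrites admit onto their subdendrites. Because $D$ is a compact metric space, it is separable; I would choose a sequence $\{p_0, p_1, p_2, \ldots\}$ dense in $D$, with $p_0 = v_0$, and arranged so that some initial segment is $1/n$-dense in $D$ for every $n$ (which is possible by enumerating a countable dense set in a diagonal manner).

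Set $T_0 = \{v_0\}$, and inductively, given the finite subtree $T_i \subseteq D$, define $T_{i+1} = T_i \cup [r_i(p_{i+1}), p_{i+1}]$, where $r_i(p_{i+1})$ is the unique first point of $T_i$ met by the arc from $p_{i+1}$ in $D$, and the bracketed expression denotes the unique arc in $D$ joining those endpoints (if $p_{i+1} \in T_i$, insert a redundant subdivision so that exactly one new arc appears per index). Standard continuum theory gives that for each $i$ there is a continuous retraction $R_i : D \to T_i$ characterized as the ``first point of $T_i$'' map, and that these satisfy the compatibility $R_i = R_i \circ R_{i+1}$ on $D$ and $R_i|_{T_{i+1}} = \rho_{i+1}$, where $\rho_{i+1}$ sends the newly added arc linearly to its base point $r_i(p_{i+1})$ and fixes $T_i$. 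Parametrizing the sliding of the new arc toward its base at constant speed exhibits $\rho_{i+1}$ as the culmination of a track-faithful deformation retraction, so it is a compact collapse in the sense of Section \ref{Section: Topological expansions and collapses}.

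With the sequence in hand, define $\Psi : D \to \underleftarrow{\lim}\{T_i, \rho_i\}$ by $\Psi(x) = (R_0(x), R_1(x), R_2(x), \ldots)$. Well-definedness is the compatibility equation above; continuity follows coordinate-wise. For injectivity, observe that $R_i(x) \to x$ as $i \to \infty$ since the $T_i$ eventually become $\varepsilon$-dense and $R_i(x)$ lies within $\mathrm{dist}(x, T_i)$ of $x$. For surjectivity, let $(x_0, x_1, x_2, \ldots)$ lie in the inverse limit; the identity $x_i = R_i(x_j)$ for $j > i$ implies $d(x_i, x_j) = d(x_j, T_i)$, which tends to $0$ uniformly in $j$ as $i \to \infty$ by the $1/n$-dense choice. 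Hence $(x_i)$ is Cauchy, converges to some $x^\ast \in D$, and by continuity $R_i(x^\ast) = \lim_{j \to \infty} R_i(x_j) = x_i$. So $\Psi$ is a continuous bijection from a compact space to a Hausdorff space, hence a homeomorphism.

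The main obstacle will be justifying the existence, continuity, and compatibility of the first-point retractions $R_i : D \to T_i$ on a general dendrite; these rely crucially on the facts that $D$ is uniquely arcwise connected, locally connected, and has the property that each finite subtree is a retract (indeed, a strong deformation retract) via a unique canonical retraction. Once these structural facts are invoked, the argument reduces to a metric compactness argument and the general-topology observation that a continuous bijection from a compact space to a Hausdorff space is automatically a homeomorphism.
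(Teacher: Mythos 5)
This is a result the paper itself does not prove --- it is quoted from Nadler \cite[Th.10.27]{Nad92}, and the only in-paper commentary is the construction sketched in the proof of the theorem that follows it (build the $T_i$ by successively attaching the arc from a dense sequence point to the tree already built, using the first-point retractions). Your proposal reconstructs exactly that standard argument: the tree-building step, the compatibility $R_i = R_i\circ R_{i+1}$, the identification of $\rho_{i+1}$ with $R_i|_{T_{i+1}}$ as a compact collapse, and the ``continuous bijection from a compact space to a Hausdorff space'' endgame are all correct and are the intended route.

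There is, however, one genuine gap, and it sits at the only analytically nontrivial point of the proof. You justify both injectivity and surjectivity of $\Psi$ by the claim that $d\left(x, R_i(x)\right) \leq \operatorname{dist}(x, T_i)$, i.e.\ that the first point of $T_i$ on the arc from $x$ is metrically as close to $x$ as the nearest point of $T_i$. For an arbitrary compatible metric on a dendrite this is false: take $D$ to be a U-shaped arc in the plane with $T_i$ its left leg and $x$ the top of its right leg; the nearest point of $T_i$ is across the gap, but the first point of $T_i$ along the arc from $x$ is at the bottom of the U, much farther away. Consequently $d(x_j, T_i)\to 0$ does not by itself give $d\left(x_j, R_i(x_j)\right)\to 0$, and the Cauchy/convergence arguments as written do not go through. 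The statement you actually need --- that the first-point maps $R_i$ converge \emph{uniformly} to $\operatorname{id}_D$ as the $T_i$ increase to a dense subtree --- is true, but it is the real content of Nadler's theorem. It follows from the dendrite-specific lemma that for every $\varepsilon>0$ only finitely many pairwise disjoint subcontinua of $D$ have diameter $\geq\varepsilon$ (so all but finitely many components of $D\setminus T_i$ are small, and $d\left(x,R_i(x)\right)$ is bounded by the diameter of the component containing $x$); alternatively, you may first equip $D$ with a convex metric (Bing--Moise), under which the first-point map \emph{is} the nearest-point map and your inequality becomes valid. Either repair should be made explicit. A minor further point: when $p_{i+1}\in T_i$ the set $\overline{T_{i+1}-T_i}$ is empty rather than an arc, so such indices should simply be discarded rather than ``subdivided,'' as the paper itself notes parenthetically.
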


In this generality, we cannot assume there are fixed simplicial structures on
the $T_{i}$ making them subcomplexes of $T_{j}$ for $j\geq i$ and making the
$\rho_{i}$. Nevertheless, the proof used for Theorem
\ref{Theorem: compactifying more general trees} is applicable and gives the
following important theorem about dendrites.

\begin{corollary}
Every dendrite $D$ is a compact AR; in particular, $D$ is contractible.
\end{corollary}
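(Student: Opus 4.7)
The plan is to adapt the proof of Theorem \ref{Theorem: compactifying more general trees} to this non-simplicial setting, using Nadler's inverse-sequence description of $D$ in place of the explicit subtree exhaustion. Start with the sequence $\{v_0\}=T_0\overset{\rho_1}{\swarrow}T_1\overset{\rho_2}{\swarrow}T_2\overset{\rho_3}{\swarrow}\cdots$ furnished by Theorem \ref{Theorem: from Nadler}, so the $T_i$ are nested finite trees with compact collapses as bonding maps, and $D\cong\underleftarrow{\lim}\{T_i,\rho_i\}$. Setting $X=\bigcup_i T_i$ endowed with the topology $\mathcal{T}_I$ from Section \ref{Section: The general setting}, the canonical embedding $e:X\to\underleftarrow{\lim}\{T_i,\rho_i\}$ of Theorem \ref{Theorem: e is an embedding into the inverse limit} identifies the compactification $\overline{X_I}=X_I\sqcup R$ with $D$ itself.

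Since each $\rho_i$ is a compact collapse by hypothesis, Theorem \ref{Theorem: homotopy negligible compactifications} applies: $R$ is homotopy negligible in $\overline{X_I}=D$, and $X_I$ deformation retracts onto $T_0=\{v_0\}$. In particular $X_I$ is contractible, and by Proposition \ref{Proposition: homotopy negligible sets} the inclusion $X_I\hookrightarrow D$ is a homotopy equivalence, so $D$ is contractible as well. For the ANR conclusion, observe that every finite tree is a compact AR, hence a compact ANR, so Corollary \ref{Corollary: more general ANR conclusion} gives that $\overline{X_I}=D$ is a compact ANR. A contractible compact ANR is an AR, which yields both halves of the conclusion.

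The only substantive obstacle is verifying that Nadler's abstract inverse sequence really sits inside the Section \ref{Section: The general setting} framework in the manner required. The nesting $T_i\subseteq T_{i+1}$ is explicit in the statement of Theorem \ref{Theorem: from Nadler} (via the arc $\overline{T_{i+1}-T_i}$ attached at a single point of $T_i$), and the compact-collapse status of $\rho_{i+1}$ is also built into that statement; so this obstacle reduces to routine bookkeeping. Everything else is a direct citation of results established earlier in the paper.
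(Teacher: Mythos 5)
Your proposal is correct and follows essentially the same route as the paper: the authors likewise invoke Nadler's inverse-sequence description and then apply the argument of Theorem \ref{Theorem: compactifying more general trees}, i.e.\ Theorem \ref{Theorem: homotopy negligible compactifications} together with Corollary \ref{Corollary: more general ANR conclusion}, noting (as you do) that compact collapses rather than simplicial ones are what is actually needed.
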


With additional care, one can say a little more. A point $p\in D$ is an
\emph{end point} if it has arbitrarily small neighborhoods whose boundaries
are one-point sets. Let $D^{\left[  1\right]  }$ denote the set of end points
of $D$.

\begin{theorem}
The set $D^{\left[  1\right]  }$ is a homotopy negligible in $D$.
\end{theorem}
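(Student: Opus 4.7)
The plan is to reduce to Theorem \ref{Theorem: homotopy negligible compactifications} by choosing an inverse-limit presentation of $D$ in which $D^{[1]}$ lies entirely inside the remainder $R$. The key preliminary observation is that $D^{[1]}$ has empty interior in $D$: any nonempty open subset of $D$ contains the interior of some arc, and an interior point of an arc has two arc-directions, so it cannot be an endpoint of $D$. Consequently $D\setminus D^{[1]}$ is dense in $D$.

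Next I would construct a sequence of finite subtrees as in Theorem \ref{Theorem: from Nadler}, but with the extra constraint that $T_i\subseteq D\setminus D^{[1]}$ for every $i$. Start with $T_0=\{v_0\}$ for some $v_0\in D\setminus D^{[1]}$, and at each stage attach a new arc whose free endpoint is chosen in $D\setminus D^{[1]}$; both choices are available by the preceding density observation. The main technical step is verifying that this refined sequence still delivers the conclusion of Theorem \ref{Theorem: from Nadler}---namely, that $D$ remains the inverse limit of $\{T_i,\rho_i\}$. This reduces to arranging $\cup_i T_i$ dense in $D$ (via the usual dense-enumeration construction, applied to a countable dense subset of $D\setminus D^{[1]}$) and noting that a nested sequence of subtrees of a dendrite whose union is dense, with compact-collapse bonding maps, has $D$ itself as its inverse limit.

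Finally, applying Theorem \ref{Theorem: homotopy negligible compactifications} to $\{T_i,\rho_i\}$ yields a homotopy $H:D\times[0,1]\to D$ with $H_0=\operatorname{id}_D$ and $H_t(D)\subseteq\cup_i T_i$ for every $t>0$; equivalently, $H_t(D)\cap R=\varnothing$, where $R:=D\setminus\cup_i T_i$. Because each $T_i$ was chosen disjoint from $D^{[1]}$, we have $D^{[1]}\subseteq R$, so $H_t(D)\cap D^{[1]}=\varnothing$ for every $t>0$. This is precisely the homotopy negligibility of $D^{[1]}$ in $D$.
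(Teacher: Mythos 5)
Your proposal is correct and follows essentially the same route as the paper: both arguments exploit that $D^{[1]}$ is nowhere dense to run the Nadler-type construction over a countable dense set avoiding $D^{[1]}$, observe that interior points of arcs (hence all non-leaves, and by the choice of attaching points all leaves) of each $T_i$ miss $D^{[1]}$, and conclude $D^{[1]}\subseteq R$, which is homotopy negligible by Theorem \ref{Theorem: homotopy negligible compactifications}. No substantive differences to report.
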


\begin{proof}
One strategy for proving Theorem \ref{Theorem: from Nadler} is to begin with a
countable dense set $X=\left\{  x_{0},x_{1},x_{2},\cdots\right\}  \subseteq$
$D$, then inductively define \thinspace$T_{i+1}$ to be the union of
\thinspace$T_{i}$ with the arc $A_{i+1}$ connecting $x_{i+1}$ to $x_{0}$. It
is easy to see that $T_{i}\cap A_{i+1}$ is either $\left\{  x_{0}\right\}  $
or a subarc of $A_{i+1}$; hence $T_{i+1}$ can be given the structure of a
finite tree. (If $T_{i}=T_{i+1}$, and we can later omit it from the
sequence.). It is a simple fact that $D^{\left[  1\right]  }$ is nowhere dense
in $D$, so we can choose $X$ to be disjoint from $D$. In that case, an element
of $D^{\left[  1\right]  }$ is never the leaf of a $T_{i}$; and clearly, it
cannot be a nonleaf of a $T_{i}$. Since or compactification is of the form
$\overline{D}=\left(  \cup T_{i}\right)  \sqcup R$, then $D^{\left[  1\right]
}\subseteq R$.
\end{proof}

\bibliographystyle{amsalpha}
\bibliography{Biblio}

\end{document}